\newtheorem{thm}{Theorem}[section]
\newtheorem{defin}[thm]{Definition}
\newtheorem{lem}[thm]{Lemma}
\newtheorem{cor}[thm]{Corollary}
\newtheorem{prop}[thm]{Proposition}
\theoremstyle{definition}
\newtheorem{rem}[thm]{Remark}
\numberwithin{equation}{section}
\def\al{\alpha}
\def\be{\beta}
\def\ga{\gamma}
\def\de{\delta}
\def\la{\lambda}
\def\si{\sigma}
\def\Om{\Omega}
\def\La{\Lambda}
\def\Ga{\Gamma}
\def\Q{\mathbb{Q}}
\def\R{\mathbb{R}}
\def\C{\mathbb{C}}
\def\N{\mathbb{N}}
\def\cD{\mathcal D}
\def\cE{\mathcal E}
\def\cR{\mathcal R}
\def\cP{\mathcal P}
\def\sgn{\text{\rm sgn}}
\newcommand{\rFs}[5]{\,_{#1}F_{#2} \left( \genfrac{.}{.}{0pt}{}{#3}{#4};#5 \right)}
\newcommand{\tHe}[3]{\,_{2}H_{1} \left( \genfrac{.}{.}{0pt}{}{#1}{#2};#3 \right)}
\newcommand{\laa}{\mathfrak{a}}
\newcommand{\lak}{\mathfrak{k}}
\newcommand{\lat}{\mathfrak{t}}
\newcommand{\lau}{\mathfrak{u}}
\newcommand{\lasu}{\mathfrak{su}}
\newcommand{\laZ}{\mathfrak{Z}}
\newcommand{\SU}{\mathrm{SU}}
\newcommand{\U}{\mathrm{U}}
\newcommand{\Ad}{\mathrm{Ad}}
\title[Matrix-valued orthogonal polynomials II]
{Matrix-valued orthogonal polynomials related to $(\SU(2)\times \SU(2),\text{diag})$, II}
\author{Erik Koelink, Maarten van Pruijssen, Pablo Rom\'an}
\address{EK, MvP: Radboud Universiteit, IMAPP, 
Heyendaalseweg 135, 
6525 GL Nijmegen, 
The Netherlands}
\email{e.koelink@math.ru.nl m.vanpruijssen@math.ru.nl}
\address{PR: CIEM,
FaMAF, Universidad Nacional de C\'ordoba, Medina Allende s/n Ciudad
Universitaria, C\'ordoba, Argentina}
\email{roman@famaf.unc.edu.ar}
\date{\today \\MSC 2010: Primary 33Cxx, 33C45, Secondary 42C05, 22E46}
\begin{document}

\begin{abstract} In a previous paper we have introduced matrix-valued analogues of the Chebyshev 
polynomials by studying matrix-valued spherical functions on $\SU(2)\times \SU(2)$. In particular 
the matrix-size of the polynomials is arbitrarily large. The matrix-valued orthogonal polynomials 
and the corresponding weight function are studied. In particular, we calculate the 
LDU-decomposition of the weight where the matrix entries of $L$ are given in terms of Gegenbauer 
polynomials. The monic matrix-valued orthogonal polynomials $P_n$ are expressed in terms of Tirao's 
matrix-valued hypergeometric function using the matrix-valued differential operator of first and 
second order to which the $P_n$'s are eigenfunctions. From this result we obtain an explicit 
formula for coefficients in the three-term recurrence relation satisfied by the polynomials $P_n$. 
These differential operators are also crucial in expressing the matrix entries of $P_nL$ as a 
product of a Racah and a Gegenbauer polynomial. 
We also present a group theoretic derivation of the matrix-valued differential operators by 
considering the Casimir operators corresponding to  $\SU(2)\times \SU(2)$. 
\end{abstract}

\maketitle
\section{Introduction}\label{sec:intro}

Matrix-valued orthogonal polynomials have been studied from different perspectives in recent years. 
Originally they have been introduced by Krein \cite{Krein1}, \cite{Krein2}. Matrix-valued 
orthogonal polynomials have been related to various different subjects, such as higher-order 
recurrence equations, spectral decompositions, and representation theory. The matrix-valued 
orthogonal polynomials studied in this paper arise from the representation theory of the group 
$\SU(2)\times\SU(2)$ with the compact subgroup $\SU(2)$ embedded diagonally, see \cite{KRvP} for 
this particular case and Gangolli and Varadarajan \cite{VaradarajanGangolli}, Tirao \cite{TiraoSF}, 
Warner \cite{WarnerII} for general group theoretic interpretations of matrix-valued spherical 
functions. An important example is the study of the matrix-valued orthogonal polynomials for the 
case $(\SU(3), \U(2))$, which has been studied by Gr\"unbaum, Pacharoni and Tirao \cite{GPT} mainly 
exploiting the invariant differential operators. In \cite{KRvP} we have studied the matrix-valued 
orthogonal operators related to the case $(\SU(2)\times\SU(2),\SU(2))$, which lead to the 
matrix-valued orthogonal polynomial analogues of Chebyshev polynomials of the second kind $U_n$, in a 
different fashion. In the current paper we study these matrix-valued orthogonal polynomials in more 
detail. 

In order to state the most important results
for these matrix-valued orthogonal polynomials we recall the weight function
\cite[Thm.~5.4]{KRvP}:
\begin{equation}\label{eq:defmatrix_W}
\begin{split}
W(x)_{n,m}\, &=\, \sqrt{1-x^2}\,  \sum_{t=0}^m \al_t(m,n)\, U_{n+m-2t}(x), \\
\al_t(m,n) \, &= \, \frac{(2\ell+1)}{n+1}\frac{(2\ell-m)!m!}{(2\ell)!}
(-1)^{m-t} \frac{(n-2\ell)_{m-t}}{(n+2)_{m-t}}
\frac{(2\ell+2-t)_t}{t!}
\end{split}
\end{equation}
if $n\geq m$ and $W(x)_{n,m}=W(x)_{m,n}$ otherwise. Here and elsewhere in this paper  $\ell\in\frac12\N$, 
$n,m\in \{0,1,\cdots, 2\ell\}$, and $U_n$ is the Chebyshev polynomial of the second kind. Note that the sum in \eqref{eq:defmatrix_W} actually starts at $\min(0, n+m-2\ell)$. 
It follows that $W\colon [-1,1]\to M_{2\ell+1}(\C)$, $W(x)= \bigl( 
W(x)_{n,m}\bigr)_{n,m=0}^{2\ell}$, is a $(2\ell+1)\times (2\ell+1)$-matrix-valued integrable 
function such that all moments 
$\int_{-1}^1 x^n W(x)\, dx$, $n\in\N$, exist. From the construction given in \cite[\S 5]{KRvP}  it 
follows $W(x)$ is positive definite almost everywhere. By general considerations, e.g. 
\cite{GrunT}, we can construct the corresponding monic
matrix-valued orthogonal polynomials $\{P_n\}_{n=0}^\infty$, so
\begin{equation}\label{eq:ortho-monicP}
\langle P_n,P_m\rangle_W = \int_{-1}^1 P_n(x) \, W(x)\, \bigl(P_m(x)\bigr)^\ast\, dx \, = \, \de_{nm} H_n, 
\quad 0< H_n\in M_{2\ell+1}(\C)
\end{equation}
where $H_n>0$ means that $H_n$ is a positive definite matrix, 
 $P_n(x) = \sum_{k=0}^n x^k P^n_{k}$ with $P^n_{k}\in  M_{2\ell+1}(\C)$ and
$P^n_{n}=I$, the identity matrix. The polynomials $P_n$ are the monic variants of the matrix-valued 
orthogonal polynomials constructed in \cite{KRvP} from representation theoretic considerations. 
Note that \eqref{eq:ortho-monicP} defines a matrix-valued inner product $\langle \cdot, 
\cdot\rangle_W$ on the matrix-valued polynomials.  Using the orthogonality relations for the 
Chebyshev polynomials $U_n$  it follows that 
\begin{equation}\label{eq:H0explicitvalue}
(H_0)_{nm} \, = \, \de_{nm} \frac{\pi}{2} \frac{(2\ell+1)^2}{(n+1)(2\ell-n+1)}
\end{equation}
which is in accordance with \cite[Prop.~4.6]{KRvP}. From \cite{KRvP} we can also obtain an expression 
for $H_n$ by translating the result of \cite[Prop.~4.6]{KRvP} to the monic case in 
\cite[(4.6)]{KRvP}, but since the matrix $\Upsilon_d$ in \cite[(4.6)]{KRvP} is relatively 
complicated this leads to a complicated expression for the squared norm matrix  $H_n$ in 
\eqref{eq:ortho-monicP}. In  Corollary \ref{cor:squarednormHn} we give a simpler expression for 
$H_n$ from the three-term recurrence relation. 

These polynomials have a group theoretic interpretation as matrix-valued spherical functions 
associated to $(\SU(2)\times \SU(2), \SU(2))$, see \cite{KRvP} and Section 
\ref{sec:grouptheoreticderivation}.  
In particular, in \cite[\S 5]{KRvP} we have shown that the corresponding orthogonal polynomials are 
not irreducible, but can be written as a $2$-block-diagonal 
matrix of irreducible matrix-valued orthogonal polynomials. Indeed, if we put 
$J\in M_{2\ell}(\C)$, $J_{nm} = \de_{n+m,2\ell}$ we have $JW(x)=W(x)J$ for all $x\in [-1,1]$, and by \cite[Prop.~5.5]{KRvP} $J$ and $I$ span the commutant 
$\{Y\in M_{2\ell}(\C)\mid [Y,W(x)]=0\, \forall\, x\in [-1,1]\}$. Note that $J$ is a self-adjoint involution, $J^2=I$,
$J^\ast=J$. 
It is easier to study the 
polynomials $P_n$, and we discuss the relation to the irreducible cases when appropriate. 

In this paper we continue the study of the matrix-valued orthogonal polynomials and the related 
weight function. Let us discuss in some more detail the results we obtain in this paper. Some of 
these results are obtained employing the group theoretic interpretation and some are obtained using 
special functions. Essentially, we obtain the following results for the weight function:
\begin{enumerate}[(a)]
\item explicit expression for $\text{det}(W(x))$, hence proving \cite[Conjecture 5.8]{KRvP}, see Corollary 
\ref{cor:det-thmLDUdecompW};
\item an LDU-decomposition for $W$ in terms of Gegenbauer polynomials, see Theorem \ref{thm:LDUdecompW}.
\end{enumerate}
Part (a) can be proved by a group theoretic consideration, and gives an alternative proof for a 
related statement by Koornwinder \cite{Koornwinder85}, but we actually calculate it directly from 
(b).   
The LDU-decomposition hinges on expressing the integral of the product of two Gegenbauer 
polynomials and a Chebyshev polynomial as a Racah polynomial, see Lemma \ref{lem:Racah-thmLDUdecompW}. 

For the matrix-valued orthogonal polynomials we obtain the following results:
\begin{enumerate}[(i)]
\item $P_n$ as eigenfunctions to a second-order matrix-valued differential operator $\tilde{D}$ 
and a first-order matrix-valued differential operator $\tilde{E}$, compare \cite[\S 7]{KRvP}, see 
Theorem \ref{thm:differentialoperatorsP} and Section \ref{sec:differentialoperators};
\item the group-theoretic interpretation of $\tilde{D}$ and $\tilde{E}$ using the Casimir operators 
for $\SU(2)\times \SU(2)$, see Section \ref{sec:grouptheoreticderivation}, for which the paper by 
Casselman and Mili{\v{c}}i{\'c} \cite{CM1982} is essential;
\item explicit expressions for the matrix entries of the polynomials $P_n$ in terms of 
matrix-valued hypergeometric series using the matrix-valued differential operators, see Theorem 
\ref{thm:monicRnasMVHF};
\item explicit expressions for the matrix entries of the polynomials $P_nL$ in terms of 
(scalar-valued) Gegenbauer polynomials and Racah polynomials using the LDU-decomposition of the weight $W$ 
and differential operators, see Theorem \ref{thm:cRnasGegenbauertimesRacah};
\item explicit expression for the three-term recurrence satisfied by $P_n$, see Theorem \ref{thm:three_term_for_Rm}.
\end{enumerate}
In particular, (i) and (ii) follow from group theoretic considerations, see Section 
\ref{sec:differentialoperators} and \ref{sec:grouptheoreticderivation}. This then gives the 
opportunity to link the polynomials to the matrix-valued hypergeometric differential operator, 
leading to (iii). 
The explicit expression in (iv) involving Gegenbauer polynomials is obtained by using the 
LDU-decomposition of the weight matrix and the differential operator $\tilde{D}$. The expression of the 
coefficients as Racah polynomials involves the first order differential operator as well. 
Finally, in 
\cite[Thm.~4.8]{KRvP} we have obtained an expression for the coefficients of the three-term 
recurrence relation where the matrix entries of the coefficient matrices are given as sums of 
products of Clebsch-Gordan coefficients, and the purpose of (v) is to give a closed expression for 
these matrices. 
The case $\ell=0$, or the spherical case, corresponds to the Chebyshev polynomials $U_n(x)$, which 
occur as spherical functions for $(\SU(2)\times\SU(2),\SU(2))$ or equivalently as characters on 
$\SU(2)$. For these cases almost all of the statements above reduce to well-known statements for 
Chebyshev polynomials, except that the first order differential has no meaning for this special 
case.

The structure of the paper is as follows. In Section \ref{sec:LDU-weight} we discuss the 
LDU-decomposition of the weight, but the main core of the proof is referred to 
\ref{app:proofthmLDUdecompW}. 
In Section \ref{sec:differentialoperators} we discuss the 
matrix-valued differential operators to which the matrix-valued orthogonal polynomials are eigenfunctions. 
We give a group theoretic proof of this result in Section \ref{sec:grouptheoreticderivation}. In 
\cite[\S 7]{KRvP} we have derived the same operators by a judicious guess and next proving the 
result. In order to connect to Tirao's matrix-valued hypergeometric series, we switch to another 
variable. The connection is made precise in Section \ref{sec:HGfunctions}. This result is next used 
in Section \ref{sec:3termrecurrencerelation} to derive a simple expression for the coefficients in 
the three-term recurrence of the monic orthogonal polynomials, improving a lot on the corresponding 
result \cite[Thm.~4.8]{KRvP}. In Section \ref{sec:MVOPintermsofGegenbauerpols} we explicitly 
establish that the entries of the matrix-valued orthogonal polynomials times the $L$-part of the 
LDU-decomposition of the weight $W$ can be given explicitly as a product of a Racah polynomial and 
a Gegenbauer polynomial, see Theorem \ref{thm:cRnasGegenbauertimesRacah}. Some of the above 
statements require somewhat lengthy and/or tedious manipulations, and in order to deal with these 
computations and also for various other checks we have used computer algebra. 

As mentioned before, we consider the matrix-valued orthogonal polynomials studied in this paper as 
matrix-valued analogues of the Chebyshev polynomials of the second kind. As is well known, the 
group theoretic interpretation of the Chebyshev polynomials, or more generally of spherical 
functions, leads to more information on these special functions, and it remains to study which of 
these properties can be extended in this way to the explicit set of matrix-valued orthogonal 
polynomials studied in this paper. This paper is mainly analytic in nature, and we only use the 
group theoretic interpretation to give a new way on how to obtain the first and second order 
matrix-valued differential operator which have the matrix-valued orthogonal polynomials as 
eigenfunctions. We note that all differential operators act on the right.
The fact that we have both a first and a second order differential operator makes it possible to 
consider linear combinations, and this is useful in Section \ref{sec:HGfunctions} to link to 
Tirao's matrix-valued differential hypergeometric function and Section 
\ref{sec:MVOPintermsofGegenbauerpols} in order to diagonalise (or decouple)  a suitable matrix-
valued differential differential operator. 

We finally remark that J.A.~Tirao has informed us that Ignacio Zurri\'an has obtained results of a 
similar nature by considering matrix-valued orthogonal polynomials for the closely related pair 
$(\mathrm{SO}(4), \mathrm{SO}(3))$. We stress that our results and the results by Zurri\'an have been 
obtained independently. 

\section{LDU-decomposition of the weight}\label{sec:LDU-weight}

In this section we state the LDU-decomposition of the weight matrix $W$ in \eqref{eq:defmatrix_W} 
is discussed. The details of the proof, involving summation and transformation formulas for 
hypergeometric series (up to ${}_7F_6$-level), is presented in 
\ref{app:proofthmLDUdecompW}. 
Some direct consequences of the LDU-decomposition are discussed. The 
explicit decomposition is a crucial ingredient in Section \ref{sec:MVOPintermsofGegenbauerpols}, 
where the matrix-valued orthogonal polynomials are related to the classical Gegenbauer and Racah 
polynomials. 

In order to formulate the result we need the Gegenbauer, or ultraspherical, polynomials, see e.g. 
\cite{AndrAR}, \cite{Isma}, \cite{KoekS}, defined by 
\begin{equation}\label{eq:defGegenbauerpols}
C^{(\al)}_n (x) \, = \, \frac{(2\al)_n}{n!}
\, \rFs{2}{1}{-n, n+2\al}{\al +\frac12}{\frac{1-x}{2}}.
\end{equation}
The Gegenbauer polynomials are orthogonal polynomials; 
\begin{equation}\label{eq:orthorelGegenbauerpols}
\begin{split}
\int_{-1}^1 (1-x^2)^{\al-\frac12} C^{(\al)}_n (x) C^{(\al)}_m (x) dx\, &= \, \de_{nm}
\frac{(2\al)_n\, \sqrt{\pi}\, \Ga(\al+\frac12)}{n!\, (n+\al)\, \Ga(\al)}\, \\ &= \, 
\de_{nm} \frac{\pi\, \Ga(n+2\al)\, 2^{1-2\al}}{\Ga(\al)^2\, (n+\al)\, n!}
\end{split}
\end{equation}

\begin{thm}\label{thm:LDUdecompW}
The weight matrix $W$ has the following LDU-decomposition;
\[
W(x)=\sqrt{1-x^2}\, L(x)\,T(x)\,L(x)^t, \qquad x\in [-1,1], 
\]
where $L\colon [-1,1]\to M_{2\ell+1}(\C)$ is the unipotent lower triangular matrix
\[
L(x)_{mk} = \begin{cases} 0, & k>m \\
\displaystyle{\frac{m!\, (2k+1)!}{(m+k+1)!\, k!} C^{(k+1)}_{m-k}(x)}, & k\leq m
             \end{cases}
\]
and $T\colon [-1,1]\to M_{2\ell+1}(\C)$ is the diagonal matrix
\begin{equation*}
 T(x)_{kk}\, =\, c_k(\ell) 
(1-x^2)^k, \quad c_k(\ell)\, = \, 
\frac{4^k (k!)^4 (2k+1)}{((2k+1)!)^2} \frac{(2\ell+k+1)!\, (2\ell-k)!}{((2\ell)!)^2}.
\end{equation*}
\end{thm}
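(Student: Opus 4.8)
The plan is to verify the claimed LDU-decomposition directly by computing the $(n,m)$-entry of the product $L(x)\,T(x)\,L(x)^t$ and matching it against the explicit formula for $W(x)_{n,m}$ in \eqref{eq:defmatrix_W}, stripped of the common factor $\sqrt{1-x^2}$. Since $L$ is unipotent lower triangular and $T$ is diagonal, the matrix product collapses to a single sum over the column index, namely
\begin{equation*}
\bigl(L(x)T(x)L(x)^t\bigr)_{n,m} = \sum_{k=0}^{\min(n,m)} L(x)_{n,k}\, T(x)_{kk}\, L(x)_{m,k}.
\end{equation*}
By symmetry of both sides it suffices to treat $n\geq m$, so the sum runs $0\le k\le m$. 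Substituting the explicit entries gives a sum of products $C^{(k+1)}_{n-k}(x)\,C^{(k+1)}_{m-k}(x)\,(1-x^2)^k$ weighted by the rational-in-$k$ constants coming from $L$ and $c_k(\ell)$. The goal is thus to show that this equals $\sum_{t} \al_t(m,n)\,U_{n+m-2t}(x)$.

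First I would reduce the problem from matching functions to matching Chebyshev-$U$ coefficients. Both sides are polynomials in $x$ of degree $n+m$ with the same parity, so it is natural to expand everything in the basis $\{U_{n+m-2t}(x)\}$ and compare coefficients. The key tool is Lemma \ref{lem:Racah-thmLDUdecompW}, which (as advertised in the introduction) evaluates the integral of a product of two Gegenbauer polynomials against a Chebyshev polynomial as a Racah polynomial; using the $U$-orthogonality \eqref{eq:orthorelGegenbauerpols} in the Chebyshev normalization, the coefficient of $U_{n+m-2t}(x)$ in the product $(1-x^2)^k C^{(k+1)}_{n-k}(x)C^{(k+1)}_{m-k}(x)$ is exactly such an integral. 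So after interchanging the finite $k$-sum with the coefficient extraction, the coefficient of $U_{n+m-2t}$ on the left becomes a sum over $k$ of (products of $L$- and $T$-constants) times a Racah polynomial value.

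The central computation is then to show that this $k$-sum telescopes or evaluates in closed form to $\al_t(m,n)$ as given in \eqref{eq:defmatrix_W}. This is where the hypergeometric machinery enters: after writing the Racah polynomial as a terminating ${}_4F_3$ and absorbing the Pochhammer prefactors, the double structure (sum over $k$, internal ${}_4F_3$) should collapse via a summation or transformation identity for well-poised series. The introduction signals that identities up to the ${}_7F_6$-level are required, which tells me the main obstacle is precisely this step: recognizing the resulting multiple sum as an instance of a known (very-)well-poised summation theorem (plausibly a balanced ${}_4F_3$ or a Whipple/Watson-type transformation leading to a ${}_7F_6$), and checking that all the balancing and well-poised conditions on the parameters are met for the given integer values $n,m,\ell$. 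I expect delicate bookkeeping of the edge cases—namely the fact noted in the excerpt that the sum effectively starts at $\max(0,n+m-2\ell)$ rather than $0$, reflecting that Gegenbauer polynomials of negative-looking index vanish—to be the part most prone to sign and range errors, so I would verify the final closed form against the independently known value \eqref{eq:H0explicitvalue} and against the scalar case $\ell=0$, and cross-check low-order instances by computer algebra before declaring the identity proved.
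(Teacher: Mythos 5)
Your overall architecture coincides with the paper's: Theorem \ref{thm:LDUdecompW} is reduced (using symmetry of $W$) to the entrywise identity of Proposition \ref{prop:thmLDUdecompW} for $m\leq n$, Lemma \ref{lem:Racah-thmLDUdecompW} is used to express the coefficient of $U_{n+m-2t}(x)$ in $(1-x^2)^k C^{(k+1)}_{n-k}(x)C^{(k+1)}_{m-k}(x)$ as an explicit constant times a Racah polynomial, and the proof is thereby reduced to a terminating hypergeometric summation. The place where you diverge, and where your plan is not yet a proof, is that final summation. You propose to evaluate the sum over $k$,
\[
\sum_{k=0}^m \be_k(m,n)\, C_k(m,n)\, R_k(\la(t);0,0,-m-1,-n-1)\, =\, \al_t(m,n)\,\frac{\pi}{2},
\]
head-on, hoping it ``telescopes or evaluates in closed form.'' The paper instead uses the orthogonality relations of the Racah polynomials $\{R_k\}_{k=0}^m$ on $t\in\{0,1,\dots,m\}$ to \emph{invert} this relation, so that the identity to be verified becomes a single sum over $t$ of $\al_t(m,n)$ against one Racah polynomial (Lemma \ref{lem:sumRacah}); that sum is then evaluated by inserting the ${}_4F_3$, interchanging summations, summing the inner very-well-poised ${}_5F_4$ by Rogers--Dougall, and finishing with Pfaff--Saalsch\"utz. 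The two formulations are equivalent precisely because the Racah polynomials form a complete orthogonal system on $\{0,\dots,m\}$, but the $t$-sum is the one that actually succumbs to known summation theorems; the $k$-sum you set up is a dual-type double sum for which you have named no closing identity, and ``well-poised up to ${}_7F_6$'' is where the \emph{proof of Lemma \ref{lem:Racah-thmLDUdecompW}} lives (linearisation and connection formulas for Gegenbauer polynomials, then Whipple's ${}_7F_6\to{}_4F_3$ transformation), not where the final summation lives. So the missing ingredient is the Racah-orthogonality inversion (or, equivalently, an explicit evaluation of your $k$-sum); the rest of your reduction, including the degree/parity argument justifying comparison of Chebyshev coefficients and the caveat about the effective range of the $t$-sum, matches the paper.
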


Note that the matrix-entries of $L$ are independent of $\ell$, hence of the size of the matrix-valued weight $W$. Using
\[
\frac{d^k}{dx^k} C^{(\al)}_n(x) = 2^k (\al)_k \, C^{(\al+k)}_{n-k}(x)
\]
we can write uniformly $L(x)_{mk} = \frac{m!\, 2^{-k}\, (2k+1)!}{(k!)^2\, (m+k+1)!}\, \frac{d^kU_m}{dx^k}(x)$. 
In Theorem \ref{thm:cRnasGegenbauertimesRacah} we extend Theorem \ref{thm:LDUdecompW}, but Theorem \ref{thm:LDUdecompW} is an essential ingredient in Theorem \ref{thm:cRnasGegenbauertimesRacah}. 

Since $W(x)$ is symmetric, it suffices to consider $(n,m)$-matrix-entry for $m\leq n$ of  Theorem \ref{thm:LDUdecompW}. Hence Theorem \ref{thm:LDUdecompW} follows directly from  
Proposition \ref{prop:thmLDUdecompW} using the explicit expression \eqref{eq:defmatrix_W} for the weight $W$. 

\begin{prop}\label{prop:thmLDUdecompW} The following relation 
 \begin{equation*}
\begin{split}
\sum_{t=0}^m \al_t(m,n) U_{n+m-2t}(x) = \sum_{k=0}^m  
\be_k(m,n) (1-x^2)^k \, C^{(k+1)}_{n-k}(x)C^{(k+1)}_{m-k}(x)
\end{split}
\end{equation*}
with the coefficients $\al_t(m,n)$ given by \eqref{eq:defmatrix_W} and 
\begin{equation*}
\begin{split}
\be_k(m,n) = \frac{m!}{(m+k+1)!} \frac{n!}{(n+k+1)!}
k!\, k!\, 2^{2k} (2k+1) \frac{(2\ell+k+1)!\ (2\ell-k)!}{(2\ell)!\, (2\ell)!} 
\end{split} 
\end{equation*}
holds for all integers $0\leq m\leq n\leq 2\ell$, and all $\ell\in\frac12\N$.  
\end{prop}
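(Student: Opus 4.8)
The plan is to prove the polynomial identity by expanding both sides in the Chebyshev basis $\{U_j\}_{j\ge 0}$ and matching coefficients, using Lemma~\ref{lem:Racah-thmLDUdecompW} to evaluate the relevant coefficients of the right-hand side. First I observe that both sides are polynomials of degree $n+m$ and of parity $n+m$: on the left this is immediate from \eqref{eq:defmatrix_W}, and on the right each summand has degree $2k+(n-k)+(m-k)=n+m$, while the parities of $(1-x^2)^k$, $C^{(k+1)}_{n-k}$ and $C^{(k+1)}_{m-k}$ combine to $n+m-2k\equiv n+m\pmod 2$. Since the $U_j$ are orthogonal for the weight $\sqrt{1-x^2}$ with $\int_{-1}^1\sqrt{1-x^2}\,U_p(x)U_q(x)\,dx=\tfrac\pi2\de_{pq}$ (the case $\al=1$ of \eqref{eq:orthorelGegenbauerpols}), two such polynomials coincide iff their inner products against each $U_{n+m-2s}$ agree. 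For the left-hand side this is trivial: the $U_{n+m-2s}$-coefficient is exactly $\al_s(m,n)$.

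The point for the right-hand side is that $(1-x^2)^k\cdot\sqrt{1-x^2}=(1-x^2)^{(k+1)-\frac12}$ is precisely the orthogonality weight of the Gegenbauer polynomials $C^{(k+1)}$ occurring in the $k$-th summand. Hence the inner product of that summand against $U_{n+m-2s}$ is, up to the factor $\be_k(m,n)$,
\[
\int_{-1}^1 (1-x^2)^{(k+1)-\frac12}\,C^{(k+1)}_{n-k}(x)\,C^{(k+1)}_{m-k}(x)\,U_{n+m-2s}(x)\,dx,
\]
which is exactly the integral of a product of two Gegenbauer polynomials and a Chebyshev polynomial evaluated in Lemma~\ref{lem:Racah-thmLDUdecompW}. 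Writing $\rho_k(s)$ for the resulting Racah-polynomial value, the $U_{n+m-2s}$-coefficient of the right-hand side is $\sum_{k=0}^m \be_k(m,n)\,\rho_k(s)$. Thus the whole Proposition reduces to the scalar identities $\sum_{k=0}^m \be_k(m,n)\,\rho_k(s)=\al_s(m,n)$ for $0\le s\le m$, together with the vanishing of the same sum for $m<s\le\lfloor (n+m)/2\rfloor$ (which records that the right-hand side has no Chebyshev component below $U_{n-m}$, matching the support of the left-hand side).

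The \emph{main obstacle} is this summation over $k$. After inserting the explicit $\be_k(m,n)$ from the statement and the explicit Racah value $\rho_k(s)$ from Lemma~\ref{lem:Racah-thmLDUdecompW}, the $\ell$-dependent factorials $\frac{(2\ell+k+1)!(2\ell-k)!}{((2\ell)!)^2}$ and the balanced structure of the Racah polynomial should conspire so that $\sum_k \be_k(m,n)\rho_k(s)$ collapses to a single terminating, balanced hypergeometric sum in $k$. I expect to evaluate it at the ${}_7F_6$ level (for instance by Dougall's very-well-poised summation) after one or two contiguous ${}_3F_2$ transformations bring it into very-well-poised form; the result must then be rewritten, using the duplication formula for the Gamma function $\Ga$ to absorb the factors $2^{2k}$ and $(2k+1)!$, into precisely $\al_s(m,n)$ as given in \eqref{eq:defmatrix_W}. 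This is the computation relegated to the appendix, and it is the only genuinely delicate step.

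Finally, one simplification worth recording is the derivative reformulation: since $\frac{d^k}{dx^k}C^{(\al)}_n=2^k(\al)_k C^{(\al+k)}_{n-k}$ and $U_m=C^{(1)}_m$, one has $C^{(k+1)}_{m-k}=\frac{1}{2^k k!}\frac{d^kU_m}{dx^k}$, so the right-hand side equals
\[
\sum_{k=0}^m (2k+1)\,\frac{(2\ell+k+1)!\,(2\ell-k)!}{((2\ell)!)^2}\,\frac{m!\,n!}{(m+k+1)!\,(n+k+1)!}\,(1-x^2)^k\,\frac{d^kU_n}{dx^k}(x)\,\frac{d^kU_m}{dx^k}(x).
\]
This bilinear-in-derivatives form makes the degree and parity bookkeeping transparent and gives a convenient independent cross-check of the final Racah summation, but I would still route the actual evaluation through Lemma~\ref{lem:Racah-thmLDUdecompW}.
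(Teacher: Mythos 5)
Your reduction is sound and is in fact exactly how the paper begins: both sides are polynomials of degree $n+m$ and parity $n+m$, the integral of Lemma \ref{lem:Racah-thmLDUdecompW} vanishes for $t>m$ by a degree count against Gegenbauer orthogonality, and pairing against $\sqrt{1-x^2}\,U_{n+m-2t}$ turns the Proposition into the family of scalar identities
\[
\al_t(m,n)\,\frac{\pi}{2}\;=\;\sum_{k=0}^m \be_k(m,n)\,C_k(m,n)\,R_k(\la(t);0,0,-m-1,-n-1),\qquad 0\le t\le m,
\]
which is precisely \eqref{eq:pfThmLDUW-result1}. The genuine gap is that you never establish these identities. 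You only assert that the sum over $k$ ``should'' collapse, after some contiguous transformations, to a very-well-poised series summable by Dougall's theorem. That $k$-sum is \emph{not} the computation carried out in the appendix, and it is not visibly very-well-poised: inserting the balanced ${}_4F_3$ for $R_k$ produces a double sum in which the $\ell$-dependent factor $(2\ell+k+1)!\,(2\ell-k)!/((2\ell)!)^2$ of $\be_k(m,n)$ enters with no evident well-poised pairing, so the decisive step of your argument remains a conjecture.

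The idea you are missing is that one should not sum over $k$ at all. The Racah polynomials $R_k(\la(t);0,0,-m-1,-n-1)$, $k=0,\dots,m$, are orthogonal on the finite set $t\in\{0,1,\dots,m\}$, so the linear system displayed above can be \emph{inverted}: it holds for all $t$ if and only if the dual identity \eqref{eq:pfThmLDUW-result2} holds for all $k$, and the latter expresses $\be_k(m,n)$ as a single sum over $t$ of $\al_t(m,n)$ against a Racah polynomial. That $t$-sum is Lemma \ref{lem:sumRacah}, and it is the one that genuinely collapses: after inserting the ${}_4F_3$ and interchanging summations, the inner sum is a terminating very-well-poised ${}_5F_4$ evaluated by the Rogers--Dougall formula, and the outer sum is a ${}_3F_2$ evaluated by Pfaff--Saalsch\"utz. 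So your architecture and your use of Lemma \ref{lem:Racah-thmLDUdecompW} are the intended ones, but to complete the proof you must either carry out your $k$-summation honestly (which is not known to work as you describe) or exploit the discrete orthogonality of the Racah polynomials to trade it for the tractable $t$-summation.
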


Before discussing the proof we list some corollaries of 
Theorem \ref{thm:LDUdecompW}. First of all, we can use Theorem \ref{thm:LDUdecompW} to prove \cite[Conjecture 5.8]{KRvP}, see (a) of Section \ref{sec:intro}.

\begin{cor}\label{cor:det-thmLDUdecompW}
$\det\bigl(W(x)\bigr) =  (1-x^2)^{2(\ell+\frac12)^2} 
\prod_{k=0}^{2\ell} c_k(\ell)$. 
\end{cor}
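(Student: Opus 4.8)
The plan is to read off the determinant directly from the LDU-decomposition in Theorem \ref{thm:LDUdecompW}, so that the corollary reduces to a short bookkeeping exercise rather than a separate argument. I would start from $W(x) = \sqrt{1-x^2}\, L(x)\, T(x)\, L(x)^t$ and take determinants of both sides. Since $\sqrt{1-x^2}$ is a scalar multiplying a matrix of size $2\ell+1$, it contributes a factor $(1-x^2)^{(2\ell+1)/2}$; and since the determinant is multiplicative, $\det\bigl(L(x)\, T(x)\, L(x)^t\bigr) = \det\bigl(L(x)\bigr)^2\, \det\bigl(T(x)\bigr)$.

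The next step exploits the unipotence of $L$. Because $L(x)$ is unipotent lower triangular, as asserted in Theorem \ref{thm:LDUdecompW} (indeed $L(x)_{mm}$ involves $C^{(m+1)}_0 \equiv 1$ together with a normalising constant that equals $1$), we have $\det L(x) = 1$, so the $L$-factors drop out entirely. This is the structural point that makes the computation clean: the Gegenbauer polynomials appearing off-diagonal in $L$ play no role in the determinant, and everything is controlled by the diagonal matrix $T$.

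It then remains to compute $\det T(x)$ as the product of its diagonal entries,
\[
\det T(x) = \prod_{k=0}^{2\ell} c_k(\ell)\, (1-x^2)^k = \Bigl(\prod_{k=0}^{2\ell} c_k(\ell)\Bigr)\, (1-x^2)^{\sum_{k=0}^{2\ell} k},
\]
and to use $\sum_{k=0}^{2\ell} k = \ell(2\ell+1)$. Combining this with the scalar factor, the total power of $(1-x^2)$ is
\[
\frac{2\ell+1}{2} + \ell(2\ell+1) = (2\ell+1)\Bigl(\ell+\tfrac12\Bigr) = 2\Bigl(\ell+\tfrac12\Bigr)^2,
\]
which is exactly the stated exponent, while $\prod_{k=0}^{2\ell} c_k(\ell)$ is the remaining constant. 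There is no genuine obstacle here; the only point requiring care is the elementary identity $\frac{2\ell+1}{2} + \ell(2\ell+1) = 2(\ell+\tfrac12)^2$, which checks that the half-integer contribution from $\sqrt{1-x^2}$ combines correctly with the integer exponents coming from $T$. The argument is valid uniformly for all $\ell\in\frac12\N$.
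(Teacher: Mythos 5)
Your proof is correct and is precisely the argument the paper intends: the corollary is stated as an immediate consequence of Theorem \ref{thm:LDUdecompW}, obtained by taking determinants of $W(x)=\sqrt{1-x^2}\,L(x)T(x)L(x)^t$, using $\det L(x)=1$ (unipotence) and $\det T(x)=\prod_k c_k(\ell)(1-x^2)^k$, with the exponent bookkeeping $(2\ell+1)/2+\ell(2\ell+1)=2(\ell+\tfrac12)^2$ exactly as you carried it out. The paper only offers a genuinely different route in Remark \ref{rmk:cor-UDL-thmLDUdecompW} (a group-theoretic computation of $\det\Phi_0$ \`a la Koornwinder), which it explicitly does not use here.
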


\begin{rem}\label{rmk:cor-UDL-thmLDUdecompW} We also have another proof of this fact using a 
group theoretic approach to calculate $\det(\Phi_0(x))$, see \cite{KRvP} and Section 
\ref{sec:grouptheoreticderivation} for the definition of $\Phi_0$, and  $W$ is up to trivial 
factors equal to $(\Phi_0)(\Phi_0)^\ast$. This proof is along the lines of Koornwinder 
\cite{Koornwinder85}.  
\end{rem}

Secondly, using $J\in M_{2\ell+1}(\C)$, $J_{nm}=\de_{n+m,2\ell}$ and $W(x)=JW(x)J$, see 
\cite[Prop.~5.5, \S 6.2]{KRvP}, we obtain from Theorem \ref{thm:LDUdecompW} the UDL-decomposition 
for $W$. For later reference we also recall $JP_n(x)J=P_n(x)$, since both are the monic matrix-
valued orthogonal polynomials with respect to $W(x)=JW(x)J$.

\begin{cor}\label{cor:UDL-thmLDUdecompW}
$W(x) = \sqrt{1-x^2} \bigl(JL(x)J\bigr)\bigl(JT(x)J\bigr) \bigl(JL(x)J\bigr)^t$, $x\in [-1,1]$ 
gives the UDL-decomposition of the weight $W$. 
\end{cor}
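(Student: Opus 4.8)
The plan is to derive the UDL-decomposition directly from the LDU-decomposition of Theorem \ref{thm:LDUdecompW} by conjugating with the flip matrix $J$, exploiting the symmetry $W(x) = JW(x)J$ recorded above from \cite[Prop.~5.5, \S 6.2]{KRvP}. First I would substitute the factorization $W(x) = \sqrt{1-x^2}\, L(x)\, T(x)\, L(x)^t$ into $W(x) = JW(x)J$ and insert the identity $J^2 = I$ between consecutive factors, obtaining
\[
W(x) = \sqrt{1-x^2}\,\bigl(JL(x)J\bigr)\bigl(JT(x)J\bigr)\bigl(JL(x)^tJ\bigr).
\]
Since $J^t = J$, the last factor satisfies $JL(x)^tJ = \bigl(JL(x)J\bigr)^t$, which turns the right-hand side into exactly the asserted product.

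The remaining task is to check that this product is genuinely of UDL type, i.e.\ that $JL(x)J$ is unipotent upper triangular and $JT(x)J$ is diagonal. Here the only input is the explicit form $J_{nm} = \de_{n+m,2\ell}$, which gives $(JAJ)_{ij} = A_{2\ell-i,\,2\ell-j}$ for any matrix $A$. Applying this to the lower triangular $L$ shows that $(JL(x)J)_{ij} = L(x)_{2\ell-i,\,2\ell-j}$ vanishes precisely when $i>j$, so $JL(x)J$ is upper triangular; its diagonal entries equal $L(x)_{2\ell-i,\,2\ell-i}=1$, so it is unipotent. The same computation shows $JT(x)J$ is diagonal, with entries $c_{2\ell-i}(\ell)(1-x^2)^{2\ell-i}$. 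Taking the transpose of $JL(x)J$ then produces a unipotent lower triangular matrix, so the three factors are of the shape (upper unipotent)(diagonal)(lower unipotent), as required.

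This argument is essentially purely formal, so there is no serious obstacle; the single point that must be treated with care is the interplay between the transpose and the conjugation by $J$, namely the identity $JL^tJ = (JLJ)^t$, which relies on $J$ being a symmetric involution ($J^t=J$, $J^2=I$). If one wishes to speak of \emph{the} UDL-decomposition, one may add that a factorization of an (almost everywhere) invertible matrix into upper unipotent, diagonal, and lower unipotent factors is unique, so the displayed formula indeed gives the UDL-decomposition of $W$.
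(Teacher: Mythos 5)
Your proof is correct and follows exactly the route the paper intends: conjugate the LDU-decomposition of Theorem \ref{thm:LDUdecompW} by the symmetric involution $J$ using $W(x)=JW(x)J$ and $J^2=I$, and observe that conjugation by the flip reverses the triangularity while preserving unipotence and diagonality. The paper states the corollary without writing out these details, so your argument is simply a fuller version of the same (essentially formal) proof.
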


Thirdly, considering the Fourier expansion of the weight function $W(\cos\theta)$, and using the 
expression of the weight in terms of Clebsch-Gordan coefficients, see \cite[(5.4), (5.6), (5.7)]
{KRvP} we obtain a Fourier expansion, which is actually equivalent to Theorem \ref{thm:LDUdecompW}.

\begin{cor}\label{cor:Fourier-thmLDUdecompW} We have the following Fourier expansion
\begin{equation*}
\begin{split}
&\sum_{k=0}^{m\wedge n} (-4)^k (2k+1) \frac{(m-k+1)_k\, (n-k+1)_k}{(m+1)_{k+1}\, (n+1)_{k+1}} 
\frac{(2\ell+k+1)!\ (2\ell-k)!}{(2\ell)!\, (2\ell)!} 
e^{-i(n+m)t}  \\ &\qquad 
\times (1-e^{2it})^{2k} \rFs{2}{1}{k-n,k+1}{-n}{e^{2it}}
\rFs{2}{1}{k-m,k+1}{-m}{e^{2it}} \, = \,  \\
&\sum_{j=0}^{2\ell} \sum_{j_1=0}^n \sum_{\substack{j_2=0 \\ j_1+j_2=j}}^{2\ell-n}
\sum_{i_1=0}^m\ \sum_{\substack{i_2=0 \\ i_1+i_2=j}}^{2\ell-m}
\frac{\binom{n}{j_1}\binom{2\ell-n}{j_2}}{\binom{2\ell}{j}} 
\frac{\binom{m}{i_1}\binom{2\ell-m}{i_2}}{\binom{2\ell}{j}} 
e^{i( (n-j_1+j_2)-(m-i_1+i_2))t}
\end{split}
\end{equation*}
\end{cor}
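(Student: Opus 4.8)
The plan is to compute the Fourier expansion of the matrix entry $W(\cos t)_{n,m}/\sqrt{1-x^2}$ in two different ways and to equate the results; the asserted identity is then precisely the statement that the two expansions agree, which is why it is equivalent to Theorem \ref{thm:LDUdecompW}. Throughout I set $x=\cos t$, so that $\sqrt{1-x^2}=\sin t$ and $U_N(\cos t)=\sum_{j=0}^N e^{i(N-2j)t}$, and I regard everything as a Laurent polynomial in $e^{it}$.

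For the right-hand side I would start from the group-theoretic description recalled in Remark \ref{rmk:cor-UDL-thmLDUdecompW}: up to trivial factors $W=\Phi_0\Phi_0^\ast$, where the spherical-function matrix $\Phi_0$ has entries given in \cite[(5.4),(5.6),(5.7)]{KRvP} by sums of $\SU(2)$ Clebsch--Gordan coefficients. Evaluating these matrix coefficients in the variable $e^{it}$, the $(n,j)$-entry becomes $\sum_{j_1+j_2=j}\frac{\binom{n}{j_1}\binom{2\ell-n}{j_2}}{\binom{2\ell}{j}}e^{i(n-j_1+j_2)t}$, the binomial ratios being the explicit values of the relevant Clebsch--Gordan coefficients for this pair and $j$ indexing the multiplicity space. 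The $(n,m)$-entry of $\Phi_0\Phi_0^\ast$ is then the sum over $j$ of the $(n,j)$-entry times the conjugate of the $(m,j)$-entry, and splitting the exponential $e^{i((n-j_1+j_2)-(m-i_1+i_2))t}$ into its two factors reproduces exactly the right-hand side of the corollary. This step is a bookkeeping translation of the construction in \cite{KRvP}.

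For the left-hand side I would instead feed in the LDU-decomposition. By Proposition \ref{prop:thmLDUdecompW}, $W(\cos t)_{n,m}/\sqrt{1-x^2}=\sum_{k=0}^{m\wedge n}\be_k(m,n)(1-x^2)^k C^{(k+1)}_{n-k}(\cos t)C^{(k+1)}_{m-k}(\cos t)$, and I would convert each ingredient to exponential form. From $1-e^{2it}=-2ie^{it}\sin t$ one gets $(1-x^2)^k=\sin^{2k}t=(-4)^{-k}e^{-2ikt}(1-e^{2it})^{2k}$, and from the generating function $(1-re^{it})^{-\al}(1-re^{-it})^{-\al}=\sum_N C^{(\al)}_N(\cos t)\,r^N$ one reads off the exponential form of the Gegenbauer polynomial, which for $\al=k+1$, $N=n-k$ becomes $C^{(k+1)}_{n-k}(\cos t)=\binom{n}{k}\,e^{-i(n-k)t}\,\rFs{2}{1}{k-n,k+1}{-n}{e^{2it}}$ (the terminating series read as the polynomial of degree $n-k$). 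Multiplying the two Gegenbauer factors together with the power of $\sin t$, the exponential prefactors collapse to the single factor $e^{-i(n+m)t}$, leaving $(1-e^{2it})^{2k}$ times a product of two hypergeometric polynomials and a scalar. Rewriting $\be_k(m,n)\binom{n}{k}\binom{m}{k}$ by means of $(k+1)_{n-k}/(n-k)!=\binom{n}{k}$, $(m-k+1)_k=m!/(m-k)!$ and $(m+1)_{k+1}=(m+k+1)!/m!$ identifies this scalar with the coefficient displayed on the left-hand side.

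The routine parts are the Gegenbauer exponential formula and the substitution $\sin t\leftrightarrow(1-e^{2it})$; the delicate part is the constant bookkeeping. On the right-hand side this means reducing the $\SU(2)$ Clebsch--Gordan coefficients to the binomial ratios $\binom{n}{j_1}\binom{2\ell-n}{j_2}/\binom{2\ell}{j}$, and on the left-hand side it means matching the scalar coefficient exactly---in particular correctly tracking the powers of $2$ and $4$, the $(2k+1)$ factor, and the factor arising from $\sin^{2k}t=(-4)^{-k}e^{-2ikt}(1-e^{2it})^{2k}$---after combining $\be_k(m,n)$, the binomial leading coefficients of the Gegenbauer polynomials, and the Pochhammer-to-factorial simplifications. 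Since both computations produce the Fourier expansion of one and the same entry $W(\cos t)_{n,m}/\sqrt{1-x^2}$, the two displayed expressions must coincide, which is the content of the corollary.
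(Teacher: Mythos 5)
Your proposal follows the paper's own proof in all essentials: the right-hand side is the original Clebsch--Gordan/Fourier definition of the weight from \cite[\S 5, 6]{KRvP} rewritten via \cite[(5.10)]{KRvP}, and the left-hand side is Proposition \ref{prop:thmLDUdecompW} with $\sin^{2k}t\, C^{(k+1)}_{n-k}(\cos t)C^{(k+1)}_{m-k}(\cos t)$ converted to exponential form. The only real difference is the route to that exponential form: the paper invokes the Badertscher--Koornwinder result and the generating function of the Hahn polynomials to obtain \eqref{eq:BKexpansion}, whereas you read off
$C^{(k+1)}_{n-k}(\cos t)=\binom{n}{k}e^{-i(n-k)t}\,\rFs{2}{1}{k-n,k+1}{-n}{e^{2it}}$
directly from the Gegenbauer generating function. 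That formula is correct (after factoring out $e^{-i(n-k)t}$, the coefficient of $e^{2ijt}$ on both sides equals $\binom{n-j}{k}(k+1)_j/j!$), and your route is, if anything, more self-contained.

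One warning, though: the constant bookkeeping that you single out as ``the delicate part'' and then assert works out does \emph{not} close against the statement as printed. Carrying it through, the scalar multiplying $e^{-i(n+m)t}(1-e^{2it})^{2k}$ times the two ${}_2F_1$'s is
\[
\be_k(m,n)\,(-4)^{-k}\binom{n}{k}\binom{m}{k}\;=\;(-1)^k\,(2k+1)\,\frac{(m-k+1)_k\,(n-k+1)_k}{(m+1)_{k+1}\,(n+1)_{k+1}}\,
\frac{(2\ell+k+1)!\,(2\ell-k)!}{(2\ell)!\,(2\ell)!},
\]
i.e. the displayed $(-4)^k$ should be $(-1)^k$. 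A direct check with $n=m=1$, $2\ell=1$ confirms this: both sides of the corollary must equal $2$, but the $k=1$ term of the printed left-hand side contributes $-2\,e^{-2it}(1-e^{2it})^2=8\sin^2 t$, whereas $\be_1(1,1)\sin^2t=2\sin^2 t$ is what is required. (Relatedly, the constant in \eqref{eq:BKexpansion} is off by $(-1)^k$, which cancels upon taking the product of the two factors and so does not affect this count.) This is an erratum in the corollary rather than a defect of your method --- your derivation, done honestly, produces the corrected coefficient --- but it does mean that your final claim that the scalar ``identifies with the coefficient displayed'' is one you have asserted rather than verified, and as stated it is false by a factor $4^k$. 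Do the Pochhammer simplifications explicitly.
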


\begin{proof} In \cite[\S 5, 6]{KRvP} the weight function $W(\cos t)$ was initially defined as a 
Fourier polynomial with the coefficients given in terms of Clebsch-Gordan coefficients. After 
relabeling this gives
\begin{equation*}
\begin{split}
& \sum_{j=0}^{2\ell} \sum_{j_1=0}^n \sum_{\substack{j_2=0 \\ j_1+j_2=j}}^{2\ell-n}
\sum_{i_1=0}^m\ \sum_{\substack{i_2=0 \\ i_1+i_2=j}}^{2\ell-m}
\frac{\binom{n}{j_1}\binom{2\ell-n}{j_2}}{\binom{2\ell}{j}} 
\frac{\binom{m}{i_1}\binom{2\ell-m}{i_2}}{\binom{2\ell}{j}} 
e^{i( (n-j_1+j_2)-(m-i_1+i_2))t} = \\
&\bigl( L(\cos t) T(\cos t) L(\cos t)^t\bigr)_{nm} =
\hskip-.3truecm \sum_{k=0}^{\min(m,n)}  
\be_k(m,n) \sin^{2k} t \, C^{(k+1)}_{n-k}(\cos t)C^{(k+1)}_{m-k}(\cos t)
\end{split}
\end{equation*}
where we have used \cite[(5.10)]{KRvP} to express the Clebsch-Gordan coefficients in terms of 
binomial coefficients. 

Using the result \cite[Cor. 6.3]{BadeK} by Koornwinder and Badertscher together with the Fourier 
expansion of the Gegenbauer polynomial, see \cite[(2.8)]{BadeK}, \cite[(6.4.11)]{AndrAR}, \cite[(4.5.13)]{Isma}, we find the Fourier expansion of 
$\sin^k t\, C^{(k+\la)}_{n-k}(\cos t)$ in terms of Hahn polynomials defined by
\begin{equation}
Q_k(j;\al,\be,N) = \rFs{3}{2}{-k,k+\al+\be+1,-j}{\al+1, -N}{1}, \qquad
k\in \{0,1,\cdots, N\},
\end{equation}
see \cite[p.~345]{AndrAR}, \cite[\S 6.2]{Isma}, \cite[\S 1.5]{KoekS}. 
For $\la=1$  the explicit formula is  
\begin{equation}\label{eq:BKexpansion}
\begin{split}
&\frac{i^k (n+1)_{k+1}\, (n-k)!}{2^k (\frac32)_k\, (2k+2)_{n-k}}
\sin^k t\, C^{(k+1)}_{n-k}(\cos t) = 
\sum_{j=0}^n  
Q_k(j;0,0,n) e^{i(2j-n)t}\\ &\qquad\qquad\qquad \qquad\qquad\, =\, e^{-int} (1-e^{2it})^k \rFs{2}{1}{k-n, k+1}{-n}{e^{2it}}
\end{split}
\end{equation}
using the generating function \cite[(1.6.12)]{KoekS} for the Hahn polynomials in the last equality. Plugging this in the identity gives the required result. 
\end{proof}

In the proof of Proposition \ref{prop:thmLDUdecompW} and Theorem \ref{thm:LDUdecompW} given in 
\ref{app:proofthmLDUdecompW} 
we use a somewhat unusual integral representation of a Racah polynomial. Recall the Racah polynomials,
\cite[p.~344]{AndrAR}, \cite[\S 1.2]{KoekS}, defined by 
\begin{equation}\label{eq:defRacahpols}
R_k(\la(t);\al,\be,\ga,\de)\, =\, \rFs{4}{3}{-k, k+\al+\be+1, -t, t+\ga+\de+1}{\al+1, \be+\de+1,\ga+1}{1}
\end{equation}
where $\la(t)=t(t+\ga+\de+1)$, and one out of $\al+1$, $\be+\de+1$, $\ga+1$ equals $-N$ with a non-negative integer $N$. 
The Racah polynomials with $0\leq k\leq N$ form a set of orthogonal polynomials for $t\in \{0,1,\cdots, N\}$ for suitable conditions on the parameters. 
For the special case of the Racah polynomials in Lemma \ref{lem:Racah-thmLDUdecompW} the orthogonality relations are given in 
\ref{app:proofthmLDUdecompW}. 

\begin{lem}\label{lem:Racah-thmLDUdecompW} 
For integers $0\leq t, k\leq m\leq n$ we have 
\begin{gather*}
\int_{-1}^1 (1-x^2)^{k+\frac12} C^{(k+1)}_{n-k}(x)C^{(k+1)}_{m-k}(x) U_{n+m-2t}(x) 
\, dx 
= 
\frac{\sqrt{\pi}\, \Ga(k+\frac32)}{(k+1)}
\frac{(k+1)_{m-k}}{(m-k)!} \\ \times  \frac{(k+1)_{n-k}}{(n-k)!} 
\frac{(-1)^{k}\, (2k+2)_{m+n-2k}\, (k+1)!}{(n+m+1)!} 
R_k(\la(t);0,0,-n-1,-m-1)
\end{gather*}
\end{lem}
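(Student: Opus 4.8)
The plan is to pass to the trigonometric variable and exploit the Badertscher--Koornwinder expansion \eqref{eq:BKexpansion}, which turns the integral into a finite convolution of Hahn polynomials; this convolution is then identified with the Racah polynomial.

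First I substitute $x=\cos\theta$. Since $(1-x^2)^{k+\frac12}=\sin^{2k+1}\theta$, $dx=-\sin\theta\,d\theta$ and $U_{n+m-2t}(\cos\theta)\sin\theta=\sin((n+m-2t+1)\theta)$, one power of $\sin\theta$ is absorbed by $U$ and the integral becomes
\[
\int_0^\pi \bigl[\sin^k\theta\,C^{(k+1)}_{n-k}(\cos\theta)\bigr]\bigl[\sin^k\theta\,C^{(k+1)}_{m-k}(\cos\theta)\bigr]\,\sin\theta\,\sin\bigl((n+m-2t+1)\theta\bigr)\,d\theta,
\]
which is exactly the shape to which \eqref{eq:BKexpansion} applies.

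Next I apply \eqref{eq:BKexpansion} to each bracket, writing $\sin^k\theta\,C^{(k+1)}_{p-k}(\cos\theta)=c_p\sum_{j=0}^{p}Q_k(j;0,0,p)\,e^{i(2j-p)\theta}$ for $p\in\{n,m\}$, with $c_p=2^k(\tfrac32)_k(2k+2)_{p-k}/\bigl(i^k(p+1)_{k+1}(p-k)!\bigr)$. Expanding $\sin\theta\,\sin((n+m-2t+1)\theta)=\tfrac12[\cos((n+m-2t)\theta)-\cos((n+m-2t+2)\theta)]$ into exponentials, the whole integrand is a finite combination of $e^{ip\theta}$; since it is even in $\theta$ I may replace $\int_0^\pi$ by $\tfrac12\int_{-\pi}^{\pi}$ and use $\int_{-\pi}^{\pi}e^{ip\theta}\,d\theta=2\pi\delta_{p,0}$. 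This collapses the double sum over $(j_1,j_2)$ to the four constraints $j_1+j_2\in\{t,\,t-1,\,n+m-t,\,n+m-t+1\}$; using the Hahn symmetry $Q_k(p-j;0,0,p)=(-1)^kQ_k(j;0,0,p)$ to pair $n+m-t\leftrightarrow t$ and $n+m-t+1\leftrightarrow t-1$, the integral reduces to $\tfrac{\pi}{2}\,c_nc_m\,(S_t-S_{t-1})$, where $S_r=\sum_{j_1+j_2=r}Q_k(j_1;0,0,n)\,Q_k(j_2;0,0,m)$.

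It remains to evaluate the Hahn convolution. Expanding each $Q_k$ by its defining ${}_3F_2$-series and carrying out the inner summation over $j_1$ by the Vandermonde--hockey-stick identity $\sum_j\binom{j}{a}\binom{t-j}{b}=\binom{t+1}{a+b+1}$, followed by Pascal's rule $\binom{t+1}{s+1}-\binom{t}{s+1}=\binom{t}{s}$, gives the explicit double series
\[
S_t-S_{t-1}=\sum_{i_1,i_2\ge 0}\frac{(-k)_{i_1}(k+1)_{i_1}}{i_1!\,(-n)_{i_1}}\,\frac{(-k)_{i_2}(k+1)_{i_2}}{i_2!\,(-m)_{i_2}}\,\frac{(-t)_{i_1+i_2}}{(i_1+i_2)!}.
\]
The main obstacle is to collapse this Kamp\'e de F\'eriet--type double sum to the single series $\rFs{4}{3}{-k,k+1,-t,t-n-m-1}{1,-m,-n}{1}=R_k(\la(t);0,0,-n-1,-m-1)$; I would do this by re-expanding $(-t)_{i_1+i_2}$ and $(i_1+i_2)!$ and applying a terminating balanced-series transformation, which is verifiable directly for small $k$ (for $k=0,1$ both sides agree identically, and both equal $1$ at $t=0$ since $S_0-S_{-1}=1$). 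Finally the stated prefactor follows from $\tfrac{\pi}{2}c_nc_m$ after the simplifications $(2k+2)_{m+n-2k}/(m+n+1)!=1/(2k+1)!$, $(2k+2)_{p-k}/(p+1)_{k+1}=(k!/(2k+1)!)\,(k+1)_{p-k}$, the duplication formula $2\cdot4^k\,\Ga(k+\tfrac32)\,k!=\sqrt{\pi}\,(2k+1)!$, and $i^{2k}=(-1)^k$, which together reproduce $\tfrac{\sqrt{\pi}\,\Ga(k+\frac32)}{k+1}\,\frac{(k+1)_{m-k}}{(m-k)!}\,\frac{(k+1)_{n-k}}{(n-k)!}\,\frac{(-1)^k(2k+2)_{m+n-2k}(k+1)!}{(n+m+1)!}$.
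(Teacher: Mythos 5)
Your route is genuinely different from the paper's (which never passes to the Fourier side: it linearises the product $C^{(k+1)}_{n-k}C^{(k+1)}_{m-k}$, expands $U_{n+m-2t}=C^{(1)}_{n+m-2t}$ in Gegenbauer polynomials of parameter $k+1$, integrates by orthogonality, and then transforms the resulting very-well-poised ${}_7F_6$ to a balanced ${}_4F_3$ via Whipple). Your reduction to $\tfrac{\pi}{2}c_nc_m(S_t-S_{t-1})$ is correct: the parity argument, the four delta constraints, the pairing via $Q_k(N-j;0,0,N)=(-1)^kQ_k(j;0,0,N)$, the Vandermonde/Pascal step giving the double sum with $(-t)_{i_1+i_2}/(i_1+i_2)!$, and the prefactor bookkeeping all check out (I verified the prefactor reduces exactly to the one in the Lemma).

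The genuine gap is the step you yourself flag as "the main obstacle": the identity
\[
\sum_{i_1,i_2\ge 0}\frac{(-k)_{i_1}(k+1)_{i_1}}{i_1!\,(-n)_{i_1}}\,\frac{(-k)_{i_2}(k+1)_{i_2}}{i_2!\,(-m)_{i_2}}\,\frac{(-t)_{i_1+i_2}}{(i_1+i_2)!}\;=\;\rFs{4}{3}{-k,\,k+1,\,-t,\,t-n-m-1}{1,\,-m,\,-n}{1}.
\]
This is not a routine re-expansion. If you group the double sum by $s=i_1+i_2$, the inner sum over $i_1$ is a terminating ${}_5F_4$ whose parameter excess is $m-n$ shy of balanced and which is not (very-)well-poised, so no off-the-shelf "terminating balanced-series transformation" applies directly; reducing this Kamp\'e de F\'eriet-type double series to a single balanced ${}_4F_3$ is of essentially the same difficulty as the paper's entire ${}_7F_6$-to-${}_4F_3$ argument, and it is precisely where all the content of the Lemma sits. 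Checking $k=0,1$ and $t=0$ does not establish it (both sides are polynomials of degree $k$ in $\la(t)=t(t-n-m-1)$, so finitely many checks in $t$ alone cannot suffice either unless you first prove both sides are such polynomials and control the degree in $k$). To close the gap you would need either an actual series manipulation (e.g.\ via the generating function $\sum_t(S_t-S_{t-1})z^t=(1-z)^{2k+1}\,\rFs{2}{1}{k-n,k+1}{-n}{z}\rFs{2}{1}{k-m,k+1}{-m}{z}$ and an identification of its coefficients), or a citation of the known convolution/linearisation theorem for Hahn polynomials with Racah coefficients. As written, the proof is incomplete at its decisive step.
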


\begin{rem}\label{rmk:lem:Racah-thmLDUdecompW} Lemma \ref{lem:Racah-thmLDUdecompW} can be extended using the same method of proof to  
\begin{equation}\label{eq:result2a}
\begin{split}
&\int_{-1}^1 (1-x^2)^{\al+k+\frac12} C^{(\al+k+1)}_{n-k}(x)C^{(\al+k+1)}_{m-k}(x) C^{(\be)}_{n+m-2t}(x) 
\, dx  = \\ 
&\frac{(\al+k+1)_{m-k}\, (2k+2\al+2)_{n-k}}{(m-k)!\, (n-m)!} \frac{(-m+\be-\al-1)_{m-t}}{(m-t)!} 
\frac{(\be)_{n-t}\sqrt{\pi}\, \Ga(\al+k+\frac32)}{\Ga(\al+n+m-t+2)} \\
&\times \rFs{4}{3}{k-m, -m-2\al-k-1,t-m,\be+n-t}{\be-\al-1-m,-m-\al, n-m+1}{1}
\end{split}
\end{equation}
assuming $ n\geq m$.
Lemma \ref{lem:Racah-thmLDUdecompW} corresponds to the case $\al=0$, $\be=1$ after using a 
transformation for a balanced ${}_4F_3$-series. Note that ${}_4F_3$-series can be expressed as a 
Racah polynomial orthogonal on $\{0,1,\cdots, m\}$ in case $\al=0$ or $\be=\al+1$, which 
corresponds to Lemma \ref{lem:Racah-thmLDUdecompW}. We do not use \eqref{eq:result2a} in the paper, 
and a proof follows the lines of the proof of Lemma \ref{lem:Racah-thmLDUdecompW} as given in 
\ref{app:proofthmLDUdecompW}. 
\end{rem}

In \cite[Thm.~6.5]{KRvP}, see Section \ref{sec:intro}, we have proved that the weight function $W$ 
is not irreducible, meaning that there exists $Y\in M_{2\ell+1}(\C)$ so that 
\begin{equation}\label{eq:irreduciblesplitW(x)}
 YW(x)Y^t\, = \, \begin{pmatrix} W_1(x) & 0 \\ 0 & W_2(x) \end{pmatrix}, \qquad
YY^t \, = \, I \, = \, Y^tY
\end{equation}
and that there is no further reduction.

Writing 
\[
Y \, = \, \begin{pmatrix} A & B \\ C & D \end{pmatrix}, 
\quad L(x) \, = \, \begin{pmatrix} l_1(x) & 0 \\ r(x) & l_2(x) \end{pmatrix}, 
\quad T(x) \, = \, \begin{pmatrix} t_1(x) & 0 \\ 0 & t_2(x) \end{pmatrix}, 
\]
with $A$, $D$ diagonal and $B$, $C$ antidiagonal, see \cite[Cor.~5.6]{KRvP}, $l_1(x)$, $l_2(x)$ 
lower-diagonal matrices and $r(x)$ a full matrix, we can work out the block-diagonal structure of 
$YL(x)T(x)L(x)^tY^t$. It follows that the off-diagonal blocks being zero is equivalent to 
\begin{equation}\label{eq:resultforGegenbauerpols}
\left( Al_1(x)t_1(x) + Br(x)t_1(x)\right)\left(l_1(x)^tC^t+r(x)^tD^t\right) + 
Bl_2(x)t_2(x)l_2(x)^tD^t = 0.
\end{equation}
This can be rewritten as an identity for four sums of products of two Gegenbauer polynomials 
involving the weight function and the constants in Theorem \ref{thm:LDUdecompW} being zero. We do 
not write the explicit results, since we do not need them.

\section{Matrix-valued orthogonal polynomials as eigenfunctions of matrix-valued differential operators}\label{sec:differentialoperators}

In \cite[\S 7]{KRvP} we have derived that the matrix-valued orthogonal polynomials are 
eigenfunctions for a second and a first order matrix-valued differential operator by looking for 
suitable  matrix-valued differential operators self-adjoint with respect to the matrix-valued inner 
product $\langle\cdot,\cdot\rangle_W$. The method was to establish relations between the 
coefficients of the differential operators and the weight $W$, next judiciously guessing the 
general result and next proving it by a verification. 
In this paper we show that essentially these operators can be obtained from the group theoretic 
interpretation by establishing that the matrix-valued differential operators are obtainable from 
the Casimir operators for $\SU(2)\times\SU(2)$. Since the paper is split into a first part of 
analytic nature and a second part of group theoretic nature, we state the result in this section 
whereas the proofs are given in Section \ref{sec:grouptheoreticderivation}. Sections 
\ref{sec:HGfunctions} and \ref{sec:MVOPintermsofGegenbauerpols} depend strongly on the 
matrix-valued differential operators in Theorem \ref{thm:differentialoperatorsP}. 

Recall that all differential operators act on the right, so for  a matrix-valued polynomial 
$P\colon \R \to M_N(\C)$ depending on the variable $x$, the 
$s$-th order differential operator $D = \sum_{i=0}^s \frac{d^i}{dx^i} F_i(x)$, $F_i\colon \R \to 
M_N(\C)$,  acts by
\[
\bigl(PD\bigr)(x)\, =\,  \sum_{i=0}^s \frac{d^iP}{dx^i}(x) F_i(x), \qquad PD \colon \R \to M_N(\C)
\]
where 
$\bigl( \frac{d^iP}{dx^i}(x)\bigr)_{nm} = \frac{d^iP_{nm}}{dx^i}(x)$ is a matrix which is 
multiplied from the right by the matrix $F_i(x)$.  The matrix-valued orthogonal polynomial is an 
eigenfunction of a matrix-valued differential operator if there exists a matrix $\La\in M_N(\C)$, 
the eigenvalue matrix, so that $PD=\La P$ as matrix-valued functions. Note that the eigenvalue 
matrix is multiplied from the left. 
For more information on differential operators for matrix-valued functions, see e.g. \cite{GrunT}, 
\cite{TiraPNAS}. 

We denote by $E_{ij}$ the standard matrix units, i.e. $E_{ij}$ is the matrix with all matrix entries equal to zero, except for the $(i,j)$-th entry which is $1$. By convention, if either $i$ or $j$ is not in the appropriate range, the matrix $E_{ij}$ is zero. 

\begin{thm}\label{thm:differentialoperatorsP} Define the second order matrix-valued differential operator
\begin{gather*}
\tilde{D}\, =\, (1-x^2)\, \frac{d^2}{dx^2}\, + \, \left( \frac{d}{dx}\right) (\tilde{C}-x\tilde{U}) \, - \, \tilde{V}\\
\tilde{C} = \sum_{i=0}^{2\ell} (2\ell-i) E_{i,i+1} + \sum_{i=0}^{2\ell} i E_{i,i-1}, \quad
\tilde{U} = (2\ell+3)I, \quad \tilde{V} = -\sum_{i=0}^{2\ell} i(2\ell-i) E_{ii}
\end{gather*}
and the first order matrix-valued differential operators
\begin{gather*}
\tilde{E}\, =\, \left( \frac{d}{dx}\right) (\tilde{B}_0+x\tilde{B}_1) \, + \, \tilde{A} \\
\tilde{B}_0=-\sum_{i=0}^{2\ell} \frac{(2\ell-i)}{4\ell} E_{i,i+1} 
+\sum_{i=0}^{2\ell} \frac{(\ell-i)}{2\ell}E_{ii} + \sum_{i=0}^{2\ell} \frac{i}{4\ell}E_{i,i-1},\\ 
\tilde{B}_1=-\sum_{i=0}^{2\ell} \frac{(\ell-i)}{\ell} E_{i,i}, \qquad
\tilde{A}=\sum_{i=0}^{2\ell} \frac{(2\ell+2)(i-2\ell)}{-4\ell} E_{i,i}, \
\end{gather*}                           
then the monic orthogonal matrix-valued orthogonal polynomials $P_n$ satisfy
\begin{gather*}
P_n\tilde{D}\, = \, \La_n(\tilde{D})P_n, \qquad \La_n(\tilde{D}) =  \sum_{i=0}^{2\ell} \left(-n(n-1)-n(2\ell+3)+i(2\ell-i)\right)E_{ii}, \\ 
P_n\tilde{E}\, = \, \La_n(\tilde{E})P_n, \qquad \La_n(\tilde{E}) = 
\sum_{i=0}^{2\ell} \left(\frac{n(\ell-i)}{2\ell}-\frac{(2\ell+2)(i-2\ell)}{4\ell}\right)E_{ii}. \\ 
\end{gather*}
and the operators $\tilde{D}$ and $\tilde{E}$ commute. The operators are symmetric with respect to $W$.
\end{thm}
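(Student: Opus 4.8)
The plan is to deduce both eigenfunction relations from two properties of each operator: that it preserves the filtration of the $M_{2\ell+1}(\C)$-valued polynomials by degree, and that it is symmetric with respect to $\langle\cdot,\cdot\rangle_W$. Degree preservation is immediate from the explicit forms, since applying $\tilde{D}$ or $\tilde{E}$ to $x^nI$ (and to any polynomial of degree $n$) yields a matrix-valued polynomial of degree at most $n$. Granting symmetry, I would run the standard argument (as in \cite{GrunT}): write $P_n\tilde{D}=\sum_{k=0}^n M_k P_k$ with $M_k\in M_{2\ell+1}(\C)$, which is possible because the monic $P_k$ form a basis of the polynomials of degree $\le n$ over left matrix coefficients. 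Pairing against $P_j$ for $j<n$ and using left-linearity gives $M_jH_j=\langle P_n\tilde{D},P_j\rangle_W=\langle P_n,P_j\tilde{D}\rangle_W=0$, since $P_j\tilde{D}$ has degree $\le j<n$ and $P_n$ is orthogonal to all polynomials of lower degree. As $H_j>0$ is invertible, $M_j=0$ for $j<n$, whence $P_n\tilde{D}=M_nP_n$ with $\La_n(\tilde{D})=M_n$ acting on the left; the same applies to $\tilde{E}$.

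The crux is therefore symmetry, which I would reduce to matrix differential identities. Integrating $\langle P\tilde{D},Q\rangle_W=\int(P\tilde{D})WQ^\ast\,dx$ by parts twice to transfer all derivatives off $P$, and comparing with $\langle P,Q\tilde{D}\rangle_W$, one finds that $\tilde{D}=\partial^2A_2+\partial A_1+A_0$ with $A_2=(1-x^2)I$, $A_1=\tilde{C}-x\tilde{U}$, $A_0=-\tilde{V}$ is symmetric exactly when
\[
A_2W=WA_2^\ast,\qquad 2(A_2W)'-A_1W=WA_1^\ast,\qquad (A_2W)''-(A_1W)'+A_0W=WA_0^\ast,
\]
and $\tilde{E}=\partial(\tilde{B}_0+x\tilde{B}_1)+\tilde{A}$ is symmetric exactly when $A_1W+WA_1^\ast=0$ and $-(A_1W)'+A_0W=WA_0^\ast$, with $A_1=\tilde{B}_0+x\tilde{B}_1$, $A_0=\tilde{A}$. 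All boundary terms from the integrations by parts vanish because $W(x)=\sqrt{1-x^2}\,(\text{polynomial})$ vanishes at $x=\pm1$ and $A_2=(1-x^2)I$ supplies additional vanishing; this is routine to check. Since every entry of $W$ is real, $W^\ast=W^t=W$ and every $A_i^\ast$ becomes $A_i^t$. The first condition for $\tilde{D}$ is then trivial, and the second collapses to the single clean identity $\tilde{C}W+W\tilde{C}^t=2(1-x^2)W'+(4\ell+2)xW$.

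Verifying these matrix identities is where the real work lies, and I expect it to be the main obstacle. My approach would be to pass to $x=\cos\theta$, where $\sqrt{1-x^2}\,U_k(\cos\theta)=\sin((k+1)\theta)$ turns each entry into a Fourier sine polynomial $W(\cos\theta)_{nm}=\sum_t\al_t(m,n)\sin((n+m-2t+1)\theta)$. Each symmetry condition then becomes a trigonometric identity in which the matrix shifts $E_{i,i\pm1}$ in $\tilde{C}$ and $\tilde{B}_0$ realize index shifts $n\mapsto n\pm1$, and these can be matched coefficient by coefficient against the differentiation and the factor $\cos\theta$ using the recurrence $2xU_k=U_{k+1}+U_{k-1}$ together with the explicit ratios $(n-2\ell)_{m-t}/(n+2)_{m-t}$ in $\al_t$. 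The relations $W=W^t$ and $JWJ=W$ roughly halve the bookkeeping, since it suffices to treat $m\le n$. Alternatively one could insert the LDU-decomposition $W=\sqrt{1-x^2}\,LTL^t$ from Theorem \ref{thm:LDUdecompW} and exploit the Gegenbauer differential equation satisfied by the entries of $L$, but the Fourier route stays closest to the explicit formula \eqref{eq:defmatrix_W}.

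Finally, the eigenvalues and commutativity come almost for free. Because lower-order terms of $P_n$ cannot affect the top coefficient under a degree-preserving operator, $\La_n(\tilde{D})$ is the coefficient of $x^n$ in $(x^nI)\tilde{D}$, namely $\sum_{i=0}^{2\ell}\bigl(-n(n-1)-n(2\ell+3)+i(2\ell-i)\bigr)E_{ii}$, and likewise $\La_n(\tilde{E})=n\tilde{B}_1+\tilde{A}$; both are diagonal, as claimed. For commutativity, the right-action and the constancy of the eigenvalue matrices give $P_n(\tilde{D}\tilde{E})=\La_n(\tilde{D})\La_n(\tilde{E})P_n$ and $P_n(\tilde{E}\tilde{D})=\La_n(\tilde{E})\La_n(\tilde{D})P_n$; since two diagonal matrices commute, $\tilde{D}\tilde{E}$ and $\tilde{E}\tilde{D}$ agree on every $P_n$, hence on every $x^kI$, and a differential operator with polynomial coefficients is determined by its action on the monomials, so the two operators coincide.
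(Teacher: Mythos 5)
Your strategy is sound in outline but it is not the paper's: here Theorem \ref{thm:differentialoperatorsP} is proved group-theoretically in Section \ref{sec:grouptheoreticderivation}, by computing the $T^{\ell}$-radial parts of the Casimir elements $\Omega_{1}\pm\Omega_{2}$ via the Casselman--Mili\v{c}i\'c machinery, conjugating by $\Phi_{0}^{\ell}$, changing variables, and matching the result with $\tilde{D}$ and $\tilde{E}$ in Theorem \ref{thm:DOfromgrouptoDE}; symmetry and the eigenfunction property then come from self-adjointness of the Casimirs, and the eigenvalues are read off from $\frac12(\ell_i^2+\ell_i)$. What you propose is instead the analytic route of \cite[Thms.~7.5, 7.6]{KRvP}, which the paper explicitly cites as the other proof. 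Within that route, your setup is correct: the reduction of symmetry to the Dur\'an--Gr\"unbaum--Tirao equations is right (and $\tilde{C}W+W\tilde{C}^t=2(1-x^2)W'+(4\ell+2)xW$ is indeed the clean form of the middle equation), the boundary terms vanish as you say, and the passage from symmetry plus degree preservation to the eigenfunction property, and from there to commutativity via the commuting diagonal eigenvalue matrices, is standard and correctly executed.

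The gap is that the entire computational content of this route --- verifying the symmetry equations for the explicit $W$ of \eqref{eq:defmatrix_W} --- is only sketched. There remain four nontrivial identities (the second and third equations for $\tilde{D}$, both equations for $\tilde{E}$), each amounting to recurrences that the coefficients $\al_t(m,n)$ must satisfy after expanding in $\sin((n+m-2t+1)\theta)$; this is exactly the lengthy verification carried out in \cite[\S 7]{KRvP}, and until it is done nothing is proved. A second, concrete problem: your own argument forces $\La_n(\tilde{E})=n\tilde{B}_1+\tilde{A}$, which with the printed $\tilde{B}_1$ gives the diagonal entry $-n(\ell-i)/\ell-\tfrac{(2\ell+2)(i-2\ell)}{4\ell}$, whereas the theorem states $+n(\ell-i)/(2\ell)-\tfrac{(2\ell+2)(i-2\ell)}{4\ell}$; the $n$-dependent parts differ by a factor $-2$. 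You assert the match ``as claimed'' without checking it. (Corollary \ref{cor:thmdifferentialoperatorsP} is internally consistent under the same leading-coefficient test, so this points to a normalization slip in the statement of $\tilde{B}_1$ or of $\La_n(\tilde{E})$; but a proof deriving the eigenvalue from the printed coefficients must confront this rather than pass over it.)
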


The group theoretic proof of Theorem \ref{thm:differentialoperatorsP} is given in Section \ref{sec:grouptheoreticderivation}. Theorem \ref{thm:differentialoperatorsP} has been proved in 
\cite[Thms.~7.5, 7.6]{KRvP} analytically.  
The symmetry of the operators with respect to $W$ means that $\langle PD,Q\rangle_W=\langle 
P,QD\rangle_W$ and $\langle PE,Q\rangle_W=\langle P,QE\rangle_W$ for all matrix-valued polynomials 
with respect to the matrix-valued inner product $\langle\cdot, \cdot\rangle_W$ defined in 
\eqref{eq:ortho-monicP}. The last statement follows immediately from the first by the results of 
Gr\"unbaum and Tirao \cite{GrunT}. Also, $[D,E]=0$ follows from the fact that the eigenvalue 
matrices commute. In the notation of \cite{GrunT} we have
$\tilde{D}, \tilde{E}\in \cD(W)$, where $\cD(W)$ is the $\ast$-algebra of matrix-valued 
differential operators having the matrix-valued orthogonal polynomials as eigenfunctions. 

Note that $\tilde{E}$ has no analogue in case $\ell=0$, whereas $\tilde{D}$ reduces to the 
hypergeometric differential operator for the Chebyshev polynomials $U_n$. 

The matrix-differential operator $\tilde{D}$ is $J$-invariant, i.e. $J\tilde{D}J = \tilde{D}$. The 
operator $\tilde{E}$ is almost $J$-anti-invariant, up to a multiple of the identity. This is 
explained in 
Theorem \ref{thm:DOfromgrouptoDE} and the discussion following this theorem. 
In particular, $\tilde{D}$ descends to the corresponding irreducible matrix-valued orthogonal polynomials, but $\tilde{E}$ does not, see also \cite[\S 7]{KRvP}. 

\section{Matrix-valued orthogonal polynomials as matrix-valued hypergeometric functions}\label{sec:HGfunctions}

The polynomial solutions to the hypergeometric differential equation, see 
\eqref{eq:HGdiffopscalar}, are uniquely determined. Many classical orthogonal polynomials, such as 
the Jacobi, Hermite, Laguerre and Chebyshev, can be written in terms of hypergeometric series. For 
matrix-valued valued functions Tirao \cite{TiraPNAS} has introduced a matrix-valued hypergeometric 
differential operator and its solutions. The purpose of this section is to link the monic 
matrix-valued orthogonal polynomials to Tirao's matrix-valued hypergeometric functions. 

We want to use Theorem \ref{thm:differentialoperatorsP} in order to express the matrix-valued 
orthogonal polynomials as matrix-valued hypergeometric functions using Tirao's approach 
\cite{TiraPNAS}. In order to do so we have to switch from the interval $[-1,1]$ to $[0,1]$ using 
$x=1-2u$. We define 
\begin{equation}\label{eq:defpolsRn}
R_n(u)\, = \,  (-1)^n 2^{-n} P_n(1-2u), \qquad Z(u) \, = \, W(1-2u)
\end{equation}
so that the rescaled monic matrix-valued orthogonal polynomials $R_n$ satisfy 
\begin{equation}
Z(u) = W(1-2u), \qquad \int_0^1 R_n(u) Z(u) R_m(u)^\ast\, du \, = \,  2^{-1-2n} H_n. 
\end{equation}
In the remainder of Section \ref{sec:HGfunctions} we work with the polynomials $R_n$ on the interval $[0,1]$. 
It is a straightforward check to rewrite Theorem \ref{thm:differentialoperatorsP}. 

\begin{cor}\label{cor:thmdifferentialoperatorsP}
Let $D$ and $E$ be the matrix valued differential operators
\[
D\,=\,u(1-u)\frac{d^2}{du^2}+\left(\frac{d}{du}\right)(C-uU)-V,\qquad 
E\,=\,\left(\frac{d}{du}\right)(uB_1+B_0)+A_0,
\]
where the matrices $C$, $U$, $V$, $B_0$, $B_1$ and $A_0$ are given by
\begin{gather*}
C=-\sum_{i=0}^{2\ell} \frac{(2\ell-i)}{2} E_{i,i+1} 
+\sum_{i=0}^{2\ell} \frac{(2\ell+3)}{2}E_{ii} - \sum_{i=0}^{2\ell} \frac{i}{2}E_{i,i-1},\quad U=(2\ell+3)I,\\
V= -\sum_{i=0}^{2\ell} i(2\ell-i) E_{i,i}, \quad A_0=\sum_{i=0}^{2\ell} \frac{(2\ell+2)(i-2\ell)}{2\ell} E_{i,i}, \quad 
B_1=-\sum_{i=0}^{2\ell} \frac{(\ell-i)}{\ell} E_{i,i}, \\
B_0=-\sum_{i=0}^{2\ell} \frac{(2\ell-i)}{4\ell} E_{i,i+1} 
+\sum_{i=0}^{2\ell} \frac{(\ell-i)}{2\ell}E_{ii} + \sum_{i=0}^{2\ell} \frac{i}{4\ell}E_{i,i-1}. 
\end{gather*}
Then $D$ and $E$ are symmetric with respect to the weight $W$, and $D$ and $E$ commute. Moreover for every integer $n\geq 0$,
\begin{gather*}
R_nD=\La_n(D)R_n,\qquad \La_n(D)= \sum_{i=0}^{2\ell} \left(-n(n-1)-n(2\ell+3)+i(2\ell-i)\right)E_{ii},\\
R_nE=\Lambda_n(E)R_n, \qquad \La_n(E)= \sum_{i=0}^{2\ell} \left(-\frac{n(\ell-i)}{\ell}+\frac{(2\ell+2)(i-2\ell)}{2\ell}\right)E_{ii}. 
\end{gather*}
\end{cor}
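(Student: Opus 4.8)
The plan is to derive Corollary~\ref{cor:thmdifferentialoperatorsP} from Theorem~\ref{thm:differentialoperatorsP} by the affine substitution $x=1-2u$, which carries the operators $\tilde{D},\tilde{E}$ from $[-1,1]$ to $[0,1]$. First I would record the chain-rule identities $\frac{d}{dx}=-\tfrac12\frac{d}{du}$ and $\frac{d^2}{dx^2}=\tfrac14\frac{d^2}{du^2}$, which hold without a first-order correction precisely because the substitution is affine, together with $1-x^2=4u(1-u)$. Since $R_n(u)=(-1)^n2^{-n}P_n(1-2u)$ differs from $P_n(1-2u)$ only by a nonzero scalar, and a scalar commutes with every matrix coefficient, this factor cancels on both sides of any eigenvalue identity; hence $R_n$ is an eigenfunction of the transformed operator, and the eigenvalue matrix is preserved up to the same scalar.

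Next I would substitute into the two eigenvalue equations of Theorem~\ref{thm:differentialoperatorsP}, keeping in mind that the operators act on the right, so each matrix coefficient remains to the right of its derivative. For $\tilde{D}$ the leading term becomes $4u(1-u)\cdot\tfrac14\frac{d^2}{du^2}=u(1-u)\frac{d^2}{du^2}$, while $\frac{d}{dx}(\tilde{C}-x\tilde{U})$ becomes $-\tfrac12\frac{d}{du}\bigl(\tilde{C}-(1-2u)\tilde{U}\bigr)$. Matching the latter against $\frac{d}{du}(C-uU)$ forces $U=\tilde{U}$ and $C=\tfrac12(\tilde{U}-\tilde{C})$, which are exactly the matrices listed in the corollary, and the derivative-free term gives $V=\tilde{V}$ with $\La_n(D)=\La_n(\tilde{D})$ unchanged. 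Thus $D$ is literally the transform of $\tilde{D}$.

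The first-order operator is treated identically: substituting $x=1-2u$ into $P_n'(x)(\tilde{B}_0+x\tilde{B}_1)+P_n(x)\tilde{A}=\La_n(\tilde{E})P_n(x)$ and cancelling the scalar yields an eigenvalue equation for $R_n$, and comparing it with $E=\frac{d}{du}(uB_1+B_0)+A_0$ reads off $B_0,B_1,A_0$ and $\La_n(E)$. Here one fixes an overall normalising constant for $E$ — the factor responsible for $A_0=-2\tilde{A}$ and $\La_n(E)=-2\La_n(\tilde{E})$ — so that the eigenvalue matrix takes the stated form; up to this scalar, $E$ is again the transform of $\tilde{E}$. The only delicate point throughout is bookkeeping: tracking the factors $-\tfrac12,\tfrac14,4$ produced by the chain rule, respecting the right-multiplication convention for the matrix coefficients, and carrying the chosen normalisation of $E$. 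There is no conceptual obstacle, in accordance with the observation that this is a straightforward rewriting of Theorem~\ref{thm:differentialoperatorsP}.

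Finally, for the auxiliary assertions I would argue structurally rather than recompute. The pullback $f(u)\mapsto f(1-2u)$ is an isomorphism of the algebras of matrix-valued differential operators on the two intervals, and under the corresponding change of variables in the defining integral the inner product associated to $Z(u)=W(1-2u)$ differs from $\langle\cdot,\cdot\rangle_W$ only by the positive constant Jacobian $\tfrac12$. Consequently the symmetry of $\tilde{D},\tilde{E}$ with respect to $W$ transfers to the symmetry of $D,E$, and $[\tilde{D},\tilde{E}]=0$ transfers to $[D,E]=0$; equivalently, $[D,E]=0$ follows directly from the fact that the eigenvalue matrices $\La_n(D)$ and $\La_n(E)$ are simultaneously diagonal, exactly as in the discussion following Theorem~\ref{thm:differentialoperatorsP} via \cite{GrunT}.
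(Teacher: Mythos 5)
Your strategy is exactly the paper's: the paper's entire proof of this corollary is the remark that it is ``a straightforward check to rewrite Theorem~\ref{thm:differentialoperatorsP}'' under $x=1-2u$, and your treatment of the second-order operator carries this out correctly. The identities $1-x^2=4u(1-u)$ and $\frac{d}{dx}=-\frac12\frac{d}{du}$ give $U=\tilde U$, $C=\frac12(\tilde U-\tilde C)$, $V=\tilde V$ and $\La_n(D)=\La_n(\tilde D)$, all of which agree with the printed matrices; the observation that the scalar $(-1)^n2^{-n}$ relating $R_n$ to $P_n$ cancels from every eigenvalue identity, and the transfer of symmetry and commutativity through the affine change of variable, are also fine.

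The first-order operator is where your argument has a genuine gap: the step ``comparing it with $E=\frac{d}{du}(uB_1+B_0)+A_0$ reads off $B_0,B_1,A_0$'' does not go through with the matrices as printed. Carrying out the substitution gives
\begin{equation*}
R_n'(u)\Bigl(u\tilde B_1-\tfrac12(\tilde B_0+\tilde B_1)\Bigr)+R_n(u)\tilde A=\La_n(\tilde E)R_n(u),
\end{equation*}
and after multiplying by $-2$ (the scalar forced by $A_0=-2\tilde A$ and $\La_n(E)=-2\La_n(\tilde E)$, as you note) the first-order coefficient is $-2u\tilde B_1+\tilde B_0+\tilde B_1$. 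Matching this against $uB_1+B_0$ requires $B_1=-2\tilde B_1$ and $B_0=\tilde B_0+\tilde B_1$, whereas the printed matrices satisfy $B_1=\tilde B_1$ and $B_0=\tilde B_0$; the two readings differ by $\tilde B_1(1-3u)\neq0$, so no choice of normalising scalar reconciles them. Hence Theorem~\ref{thm:differentialoperatorsP} and the corollary are mutually inconsistent as printed, and it is the corollary that is correct: writing $R_1(u)=(u-\frac12)I+N$ with $N=\sum_i\frac{i}{4(i+1)}E_{i,i-1}+\sum_i\frac{2\ell-i}{4(2\ell-i+1)}E_{i,i+1}$ (computable directly from the moments of $W$), one verifies $R_1E=\La_1(E)R_1$ for the corollary's $E$, while $\tilde B_1+\tilde A\neq\La_1(\tilde E)$, so $P_1\tilde E\neq\La_1(\tilde E)P_1$ for the printed $\tilde E$. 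The version of the theorem consistent with the corollary has $\tilde B_1=\sum_i\frac{\ell-i}{2\ell}E_{ii}$ and $\tilde B_0$ with vanishing diagonal. As written, your ``reading off'' step would fail when executed; the derivation needs either to correct the printed $\tilde B_0,\tilde B_1$ first, or to establish the eigenvalue equation for $E$ by an independent check.
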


It turns out that it is more convenient to work with $D_\al=D+\al E$ for $\al\in\R$, so that 
$R_nD_\al = \La_n(D_\al)R_n$ with diagonal eigenvalue matrix $\La_n(D_\al)=\La_n(D) + \al \La_n(E)$. 
By \cite[Prop.~2.6]{GrunT}  we have $\La_n(D_\al)= -n^2-n(U_\al-1)-V_\al$. 
Since the eigenvalue matrix $\La_n(D_\al)$ is diagonal, the matrix-valued differential equation 
$R_nD_\al \, = \, \La_n(D_\al)R_n$ can be read as $2\ell+1$ differential equations for the rows of 
$R_n$. The $i$-th row of $R_n$ is a solution to 
\begin{equation}\label{eq:hypergeometric_with_lambda}
u(1-u)p''(u)+p'(u)(C_\al-uU_\al)-p(u)(V_\al+\la)=0, \qquad \la=\bigl(\La_n(D_\al)\bigr)_{ii}
\end{equation}
for $p\colon \C \to \C^{2\ell+1}$ a (row-)vector-valued polynomial function. Here 
$C_\alpha=C+\alpha B_0$, $U_\alpha=U-\alpha B_1$, $V_\alpha=V-\alpha A_0$ using the notation of 
Corollary \ref{cor:thmdifferentialoperatorsP}. Now \eqref{eq:hypergeometric_with_lambda} allows us 
to connect to Tirao's matrix-valued hypergeometric function \cite{TiraPNAS}, which we briefly 
recall in Remark \ref{rmk:TiraoMVHF}. 

\begin{rem}\label{rmk:TiraoMVHF} 
Given $d\times d$ matrices $C$, $U$ and $V$ we can consider the differential equation 
\begin{equation}\label{eq:Tirao-hypergeometric_equation}
z(1-z)F''(z)\,+\,(C-zU)F'(z)\, -\, VF(z)=0, \quad z\in\C,
\end{equation}
where $F\colon \C\to \mathbb{C}^d$ is a (column-)vector-valued function which is twice differentiable.
It is shown by Tirao \cite{TiraPNAS} that if the eigenvalues of $C$ are not in $-\N$, then  
the matrix valued hypergeometric function ${}_2H_1$ defined as the power series
\begin{equation}\label{eq:def2H1-Tirao}
\begin{split}
& \qquad\qquad \tHe{U, V}{C}{z} \, =\, \sum_{i=0}^\infty
\frac{z^i}{i!} \, [C,U,V]_i,  \\
&[C,U,V]_0=1, \quad [C,U,V]_{i+1}=(C+i)^{-1}\bigl( i^2+i(U-1)+V\bigr) [C,U,V]_i
\end{split}
\end{equation}
converges for $|z|<1$ in $M_d(\C)$. Moreover, for $F_0\in \C^d$ the (column-)vector-valued function
\[
F(z) \, = \, \tHe{U, V}{C}{z}\, F_0
\]
is a solution to \eqref{eq:Tirao-hypergeometric_equation} which is analytic for $|z|<1$, and any analytic (on $|z|<1$)  solution to \eqref{eq:Tirao-hypergeometric_equation} is of this form. 
\end{rem}

Comparing Tirao's matrix-valued hypergeometric differential equation 
\eqref{eq:Tirao-hypergeometric_equation} with \eqref{eq:hypergeometric_with_lambda} and using Remark 
\ref{rmk:TiraoMVHF}, we see that 
\begin{equation}\label{eq:explicitsolintermsof2H1}
p(u) \, = \, \left( \tHe{U_\al^t, V_\al^t+\la}{C_\al^t}{u}\, P_0\right)^t \, = \, 
P_0^t \left( \tHe{U_\al^t, V_\al^t+\la}{C_\al}{u}\right)^t, 
\end{equation}
$P_0\in\C^{2\ell+1}$, are the solutions to \eqref{eq:hypergeometric_with_lambda} which are analytic in $|u|<1$ assuming 
that eigenvalues of $C_\al^t$ are not in $-\N$. We first verify this assumption. Even though 
$V_\al$ and $U_\al$ are symmetric, we keep the notation for transposed matrices for notational 
esthetics. 

\begin{lem}\label{lem:eigenvalues_C}
For every $\ell\in\frac12\N$, the matrix $C_\al$ is a diagonalizable
matrix with eigenvalues $(2j+3)/2$, $j\in\{0,\ldots,2\ell\}$. 
\end{lem}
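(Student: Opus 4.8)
The plan is to peel off the scalar part of $C_\al$ and recognise what is left as the image of a single element of $\mathfrak{sl}_2(\C)$ in the spin-$\ell$ representation, whose eigenvalues are then forced to be independent of $\al$. Writing $C_\al=\frac{2\ell+3}{2}I+M_\al$, the matrix $M_\al$ is tridiagonal with
\[
(M_\al)_{ii}=\frac{\al(\ell-i)}{2\ell},\quad
(M_\al)_{i,i+1}=-(2\ell-i)\Bigl(\tfrac12+\tfrac{\al}{4\ell}\Bigr),\quad
(M_\al)_{i,i-1}=-i\Bigl(\tfrac12-\tfrac{\al}{4\ell}\Bigr),
\]
and $\operatorname{tr}M_\al=\frac{\al}{2\ell}\sum_{i=0}^{2\ell}(\ell-i)=0$. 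It suffices to show that $M_\al$ has the $2\ell+1$ distinct eigenvalues $\ell-i$, $i=0,\dots,2\ell$, for every $\al$: adding back $\frac{2\ell+3}{2}$ and reindexing $j=2\ell-i$ turns these into $\frac{2j+3}{2}$, and distinctness of the eigenvalues gives diagonalizability automatically.

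First I would record the elementary fact that the characteristic polynomial of a tridiagonal matrix depends on its off-diagonal entries only through the paired products $(M_\al)_{i,i+1}(M_\al)_{i+1,i}$; this is the continuant recurrence $D_k(\la)=(\la-(M_\al)_{kk})D_{k-1}(\la)-(M_\al)_{k-1,k}(M_\al)_{k,k-1}D_{k-2}(\la)$ for the leading principal minors. Here
\[
(M_\al)_{i,i+1}(M_\al)_{i+1,i}=(2\ell-i)(i+1)\Bigl(\tfrac14-\tfrac{\al^2}{16\ell^2}\Bigr).
\]
Consequently $M_\al$ has the same characteristic polynomial as the symmetric tridiagonal matrix $M_\al^{\mathrm{sym}}$ with the same diagonal and off-diagonal entries $\sqrt{(2\ell-i)(i+1)}\,\sqrt{\tfrac14-\tfrac{\al^2}{16\ell^2}}$ (a complex square root once $|\al|>2\ell$; the values $\al=\pm2\ell$ make $M_\al$ triangular, and the eigenvalues are read off the diagonal).

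The key step is to realise $M_\al^{\mathrm{sym}}$ inside the spin-$\ell$ representation $\pi_\ell$ of $\mathfrak{sl}_2(\C)$. Normalising generators $j_1,j_3$ so that $\pi_\ell(j_3)=\operatorname{diag}(\ell-i)$ and $\pi_\ell(j_1)$ is symmetric with off-diagonal entries $\tfrac12\sqrt{(2\ell-i)(i+1)}$, a direct comparison of entries gives $M_\al^{\mathrm{sym}}=\pi_\ell(\xi_\al)$ with $\xi_\al=\frac{\al}{2\ell}j_3+2\sqrt{\tfrac14-\tfrac{\al^2}{16\ell^2}}\,j_1$. In the defining two-dimensional representation this is the traceless matrix
\[
\xi_\al=\begin{pmatrix}\dfrac{\al}{4\ell} & \sqrt{\tfrac14-\tfrac{\al^2}{16\ell^2}}\\[4pt]
\sqrt{\tfrac14-\tfrac{\al^2}{16\ell^2}} & -\dfrac{\al}{4\ell}\end{pmatrix},
\qquad \det\xi_\al=-\tfrac{\al^2}{16\ell^2}-\Bigl(\tfrac14-\tfrac{\al^2}{16\ell^2}\Bigr)=-\tfrac14,
\]
so $\xi_\al$ has eigenvalues $\pm\tfrac12$ for every $\al$. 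Hence $\xi_\al$ is a regular semisimple element of $\mathfrak{sl}_2(\C)$ conjugate under $\mathrm{SL}_2(\C)$ to $j_3$; applying $\pi_\ell$ to this conjugacy shows $M_\al^{\mathrm{sym}}$ is conjugate to $\pi_\ell(j_3)=\operatorname{diag}(\ell-i)$. Its eigenvalues are therefore exactly $\ell-i$, $i=0,\dots,2\ell$, independent of $\al$, and combining with the previous paragraph finishes the argument.

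I expect the main obstacle to be pinning down the coefficients of $\xi_\al$, that is, matching the explicit entries of $M_\al$ against a fixed normalisation of $\pi_\ell(j_1)$ and $\pi_\ell(j_3)$, and checking that the $\al$-dependence conspires so that $-\det\xi_\al=\tfrac14$ exactly. Everything else is either routine (the continuant computation of the characteristic polynomial) or standard Lie theory: the $\mathrm{SL}_2(\C)$-conjugacy of traceless semisimple elements with fixed eigenvalues, and the invariance of the spectrum of $\pi_\ell(\xi)$ under conjugation of $\xi$.
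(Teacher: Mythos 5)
Your proof is correct, but it takes a genuinely different route from the paper's. The paper reads the eigenvector equation $C_\al v_\la=\la v_\la$ componentwise as a three-term recurrence in the index $n$, recognises that recurrence as the one satisfied by the Krawtchouk polynomials $K_n(\,\cdot\,;p,2\ell)$ with $p=\frac{2\ell+\al}{4\ell}$, and reads off the spectrum $\{\tfrac32+x: x=0,\dots,2\ell\}$ from the Krawtchouk spectral points; this forces a case split into $\al\in(-2\ell,2\ell)$, $|\al|>2\ell$ (via the same contiguous relation) and the triangular cases $\al=\pm2\ell$. You instead peel off $\frac{2\ell+3}{2}I$, use the continuant identity to replace $M_\al$ by a symmetric tridiagonal matrix with the same characteristic polynomial, and identify that matrix as $\pi_\ell(\xi_\al)$ for a traceless $\xi_\al\in\mathfrak{sl}_2(\C)$ with $\det\xi_\al=-\tfrac14$ independent of $\al$, hence $\Ad(\mathrm{SL}_2(\C))$-conjugate to $j_3$; I checked the entry-matching and the determinant cancellation, and both are right. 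Your argument is essentially uniform in $\al$ (the degenerate points $\al=\pm2\ell$ are still covered by the continuant, since all paired off-diagonal products vanish there), it explains conceptually why the spectrum does not depend on $\al$ (it is a conjugacy invariant of $\xi_\al$), and it fits the group-theoretic theme of Section \ref{sec:grouptheoreticderivation}. What the paper's route buys in exchange is the explicit eigenvectors of $C_\al$ as terminating ${}_2F_1$'s, which the authors remark on after the lemma; the two proofs are in fact two faces of the same fact, since Krawtchouk polynomials are precisely the overlap coefficients between the eigenbases of $j_3$ and of a conjugate Cartan element in the spin-$\ell$ representation. (Neither proof, yours included, makes sense at $\ell=0$, where $C_\al$ involves division by $\ell$; that case is vacuous since only $\al=0$ is ever needed there.)
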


\begin{proof} Note that $C_\al$ is tridiagonal, so that $v_\la=\sum_{n=0}^{2\ell} p_n(\la)e_n$  
is an eigenvector for $C_\al$ for the eigenvalue $\la$ if and only if
\begin{multline*}
-(\la-\frac32)\, p_n(\la) \, = \,\frac{(2\ell-n)(2\ell+\al)}{4\ell}\, p_{n+1}(\la) 
\\ -\left( \frac{(2\ell+\al)(2\ell-n)+n(2\ell-\al)}{4\ell}\right)\, p_n(\la) 
+\frac{(2\ell-\al)n}{4\ell} \, p_{n-1}(\la).
\end{multline*}
The three-term recurrence relation corresponds precisely to the three-term recurrence relation for the Krawtchouk polynomials for $N\in\N$, 
\[
K_n(x;p,N)\, = \,\rFs{2}{1}{-n, -x}{-N}{\frac{1}{p}}, \qquad n,x\in \{0,1,\cdots, N\},
\]
see e.g. \cite[p.~347]{AndrAR}, \cite[\S 6.2]{Isma}, \cite[\S 1.10]{KoekS}, with $N=2\ell$, $p=\frac{2\ell+\al}{4\ell}$. 
The Krawtchouk polynomials are orthogonal with respect to the binomial distribution for $0<p<1$, or
 $\al\in (-2\ell, 2\ell)$, and we find 
\[
p_n(\la) \, = \, K_n(\la; \frac{2\ell+\al}{4\ell}, \ell)
\]
and the eigenvalues of $C_\al$ are $\frac32+x$, $x\in \{0,1,\cdots, 2\ell\}$. This proves the statement for 
$\al\in (-2\ell, 2\ell)$. 

Note that for $\al\not=\pm 2\ell$, the matrix $C_\al$ is tridiagonal, and the eigenvalue equation is solved by the same contiguous relation for the ${}_2F_1$-series leading to the same statement for $|\al|>2\ell$. 
In case $\al=\pm2\ell$ the matrix $C_{\pm 2\ell}$ is upper or lower triangular, and the eigenvalues can be read off from the diagonal.
\end{proof}

In particular, we can give the eigenvectors of $C_\al$ explicitly in terms of terminating ${}_2F_1$-hypergeometric series, but we do not use the result in the paper.

So \eqref{eq:explicitsolintermsof2H1} is valid and this gives a series representation for the rows 
of the monic polynomial $R_n$. Since each row is polynomial, the series has to terminate. This 
implies that 
there exists $n\in\N$ so that $[C_\al^t,U_\al^t,V_\al^t+\la]_{n+1}$ is singular and 
$0\not= P_0\in \text{Ker}\left([C_\al^t,U_\al^t,V_\al^t+\la]_{n+1}\right)$. 

Suppose that 
$n$ is the least integer for which $[C_\al^t,U_\al^t,V_\al^t+\la]_{n+1}$ is singular, i.e.
$[C_\al^t,U_\al^t,V_\al^t+\la]_{i}$ is regular for all $i\leq n$. Since
\begin{equation}\label{eq:MV-Pochhammer}
[C_\al^t,U_\al^t,V_\al^t+\la]_{n+1}= (C_\al^t+n)^{-1}\left(n^2+n(U_\al^t-1)+V_\al^t+\la\right)[C_\al^t,U_\al^t,V_\al^t+\la]_{n}
\end{equation}
and since the matrix $(C_\al+n)$ is invertible by Lemma \ref{lem:eigenvalues_C},  
$[C_\al^t,U_\al^t,V_\al^t+\la]_{n+1}$ is a singular matrix if and only if the diagonal matrix
\begin{equation}\label{eq:defMnla}
\begin{split}
M_n^\al(\la)\, &=\,\left(n^2+n(U_\al^t-1)+V_\al^t+\la\right)\,
\\ &=\,\left(n^2+n(U_\al-1)+V_\al+\la\right) \, = \,
\la-\La_n(D_\al)
\end{split}
\end{equation}
is singular. Note that the diagonal entries of $M_n^\al(\la)$ are of the form $\la-\la_j^\al(n)$, 
so that $M_n(\la)$ is singular if and only if $\la = \la_j^\al(n)$ for some $j\in\{0,1,\cdots, 
2\ell\}$. We need that the eigenvalues are sufficiently generic. 

\begin{lem}\label{lem:eigenvalues_lambda}
Let $\al\in\R\setminus\Q$. Then $(j,n)= (i,m)\in \{0,1\cdots,2\ell\}\times\N$ if and only if 
$\la^\al_j(n) = \la^\al_{i}(m)$.
\end{lem}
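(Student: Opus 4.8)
The forward implication is immediate, so the content of the lemma is the converse: assuming $\la^\al_j(n) = \la^\al_i(m)$ with $i,j\in\{0,\dots,2\ell\}$ and $n,m\in\N$, one must conclude $(j,n)=(i,m)$. The plan is first to use the irrationality of $\al$ to decouple the single scalar identity into two rational identities, and then to show that these two identities together force the indices to agree. Writing $\la^\al_j(n)=\bigl(\La_n(D)\bigr)_{jj}+\al\bigl(\La_n(E)\bigr)_{jj}$ as in \eqref{eq:defMnla} and reading the diagonal entries off from Corollary \ref{cor:thmdifferentialoperatorsP}, I record
\[
\bigl(\La_n(D)\bigr)_{jj}=-n^2-(2\ell+2)n+j(2\ell-j),\qquad
\bigl(\La_n(E)\bigr)_{jj}=-\frac{n(\ell-j)}{\ell}+\frac{(2\ell+2)(j-2\ell)}{2\ell}.
\]
For a fixed $\ell\in\frac12\N$ and integral $j,n$ both of these are rational numbers. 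Since $\al\notin\Q$, the numbers $1$ and $\al$ are linearly independent over $\Q$, so the identity $\la^\al_j(n)=\la^\al_i(m)$ is equivalent to the pair of equations $\bigl(\La_n(D)\bigr)_{jj}=\bigl(\La_m(D)\bigr)_{ii}$ and $\bigl(\La_n(E)\bigr)_{jj}=\bigl(\La_m(E)\bigr)_{ii}$.

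Next I would simplify this pair using the substitutions $p=n-m$, $q=j-i$, $s=n+m$, $r=i+j$. Using the factorisation $j(2\ell-j)-i(2\ell-i)=(j-i)(2\ell-j-i)$, the $D$-equation becomes
\[
p\,(s+2\ell+2)+q\,(r-2\ell)=0,
\]
while the $E$-equation, after clearing the denominator $2\ell$ and using the identity $nj-mi=\tfrac12(sq+pr)$, becomes
\[
p\,(r-2\ell)+q\,(s+2\ell+2)=0.
\]
Setting $X=s+2\ell+2$ and $Y=r-2\ell$, this is precisely the symmetric homogeneous linear system $pX+qY=0$, $pY+qX=0$, with coefficient matrix $\bigl(\begin{smallmatrix}X&Y\\Y&X\end{smallmatrix}\bigr)$.

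Finally the coefficient matrix has determinant $X^2-Y^2=(X-Y)(X+Y)$, and I would verify that both factors are strictly positive in the relevant range. Indeed $X+Y=(n+m)+(i+j)+2\ge 2>0$, and since $i,j\le 2\ell$ gives $i+j\le 4\ell$, we have $X-Y=(n+m)-(i+j)+4\ell+2\ge (n+m)+2\ge 2>0$. Hence the determinant is nonzero, the system has only the trivial solution $p=q=0$, and therefore $n=m$ and $j=i$, as required.

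The one genuinely delicate point is the reduction of the two eigenvalue identities to the symmetric system above; it is this symmetry that lets the determinant factor into two manifestly positive pieces, and once the structure is exposed the conclusion is forced. Everything else is elementary: the irrationality of $\al$ is used only to split the identity into its rational and $\al$-linear parts, and the inequality $i+j\le 4\ell$ coming from $i,j\in\{0,\dots,2\ell\}$ is exactly what guarantees $X-Y>0$. Conceptually, the lemma says that the joint eigenvalue of the commuting pair $(D,E)$ separates all admissible index pairs $(j,n)$ for generic $\al$, which is the property needed to read off the monic polynomials row by row from \eqref{eq:explicitsolintermsof2H1}.
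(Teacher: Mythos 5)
Your proof is correct, and it in fact takes a more robust route than the paper's. Both arguments exploit the irrationality of $\al$, but differently: the paper writes the single identity $\la^\al_j(n)-\la^\al_i(m)=0$ as an affine equation in $\al$ whose $\al$-coefficient is displayed as $(j-i)\frac{2\ell+4}{2\ell}$, so that $j\ne i$ immediately forces $\al\in\Q$ and the case $j=i$ reduces to $(m-n)(n+m+2\ell+2)=0$. However, with the eigenvalues of Corollary \ref{cor:thmdifferentialoperatorsP} the actual coefficient of $\al$ is $\frac{1}{2\ell}\bigl(p(r-2\ell)+q(s+2\ell+2)\bigr)$ in your notation $p=n-m$, $q=j-i$, $r=i+j$, $s=n+m$; it contains the cross term $nj-mi$ and is \emph{not} proportional to $j-i$ (for instance $\ell=1$, $n=m=0$, $j=1$, $i=0$ gives $\la^\al_1(0)-\la^\al_0(0)=1+2\al$, whereas the paper's display would give $1+3\al$). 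Consequently the paper's dichotomy on $j\ne i$ versus $j=i$ does not by itself dispose of the possibility that the $\al$-coefficient vanishes while $q\ne 0$. Your argument closes exactly this gap: splitting into the $\La(D)$- and $\La(E)$-equations by $\Q$-linear independence of $1$ and $\al$ yields the symmetric system $pX+qY=0$, $pY+qX=0$ with $X=s+2\ell+2$, $Y=r-2\ell$, and the determinant $X^2-Y^2=(X-Y)(X+Y)$ is strictly positive because $i+j\le 4\ell$, so $p=q=0$. I have checked the intermediate identities ($j(2\ell-j)-i(2\ell-i)=(j-i)(2\ell-i-j)$ and $nj-mi=\tfrac12(sq+pr)$) and the positivity of $X\pm Y$; all are correct. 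The only cosmetic caveat is the degenerate case $\ell=0$, where $E$ and hence the $\al$-part of $\la^\al_j(n)$ are not defined; there $i=j=0$ is forced and the $D$-equation alone gives $n=m$, so nothing is lost.
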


\begin{proof} Assume $\la^\al_j(n)=\la^\al_{i}(m)$ and 
let $(j,n), (i,m)\in \{0,1\cdots,2\ell\}\times\N$, then 
\begin{equation*} 
0 \, =\,  \la^\al_j(n)-\la^\al_{i}(m)\, =\, 
(m-n)(n+m+2+2\ell)+(j-i)\left( \frac{\alpha(2\ell+4)}{2\ell}-j-i+2\ell\right). 
\end{equation*}
If $j\neq i$, then we solve for $\al=\frac{2\ell}{(2\ell+4)} \left( -\frac{(m-n)(m+n+2\ell+2)}{j-i}+j+i-2\ell \right)$ which is rational. 

Assume next that $j=i$, then $(m-n)(n+m+2+2\ell)=0$. 
Since $n,m,\ell\geq0$, it follows that $n=m$ and hence $(j,n)= (i,m)$. 
\end{proof}

Assume $\al$ irrational, so that Lemma \ref{lem:eigenvalues_lambda} 
shows that $M_n\bigl(\la^\al_i(m)\bigr)$ is singular if and only if $n=m$.  
So in the series \eqref{eq:explicitsolintermsof2H1} the matrix 
$[C_\al^t,U_\al^t,V_\al^t+\la]_{n+1}$ is singular 
and $[C_\al^t,U_\al^t,V_\al^t+\la]_{i}$ is non-singular for $0\leq i\leq n$. Furthermore, by Lemma 
\ref{lem:eigenvalues_lambda} we see that the kernel of $[C_\al^t,U_\al^t,V_\al^t+\la]_{n+1}$ is 
one-dimensional if and only if $\la=\la^\al_n(i)$, $i\in\{0,1,\cdots,2\ell\}$. In case 
$\la=\la^\al_n(i)$ 
\[
P_0 \, = \, [C_\al^t, U_\al^t, V_\al^t+\la^\al_n(i)]_n^{-1} e_i
\]
is determined uniquely up to a scalar, where $e_i$ is the standard basis vector. 

We can now state the main result of this section, expressing the monic polynomials $R_n$ as a 
matrix-valued hypergeometric function. 

\begin{thm}\label{thm:monicRnasMVHF} 
With the notation of Remark \ref{rmk:TiraoMVHF} the monic matrix-valued orthogonal polynomials can 
be written as 
\[
\bigl(R_n(u)\bigr)_{ij}\, = \, \left( \tHe{U_\al^t, V_\al^t+\la^\al_n(i)}{C_\al^t}{u} 
n! \, [C_\al^t, U_\al^t, V_\al^t+\la^\al_n(i)]_n^{-1} e_i\right)^t_j
\]
for all $\al\in\R$. 
\end{thm}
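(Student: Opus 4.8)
The plan is to assemble the analysis already carried out in the paragraphs preceding the statement, to fix the one remaining normalization constant, and finally to remove the restriction to irrational $\al$ underlying that analysis. First I would fix $\al\in\R\setminus\Q$ and work with the combined operator $D_\al=D+\al E$ of Corollary \ref{cor:thmdifferentialoperatorsP}, for which $R_nD_\al=\La_n(D_\al)R_n$ with $\La_n(D_\al)$ diagonal. Reading this matrix identity row by row, and using that the operator acts on the right so that the $i$-th row of $R_nD_\al$ depends only on the $i$-th row of $R_n$, the $i$-th row of $R_n$ is a (row-)vector-valued polynomial solution of \eqref{eq:hypergeometric_with_lambda} with $\la$ equal to the $i$-th diagonal entry $\la^\al_n(i)$ of $\La_n(D_\al)$. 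Lemma \ref{lem:eigenvalues_C} shows that the eigenvalues of $C_\al$, and hence of $C_\al^t$, are the positive numbers $(2j+3)/2$ and in particular avoid $-\N$, so Tirao's theorem recalled in Remark \ref{rmk:TiraoMVHF} applies: by \eqref{eq:explicitsolintermsof2H1} the $i$-th row equals $\bigl(\tHe{U_\al^t,V_\al^t+\la}{C_\al^t}{u}\,P_0\bigr)^t$ for some $P_0\in\C^{2\ell+1}$.

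Next I would pin down $P_0$ up to a scalar, which is exactly the content of the discussion following \eqref{eq:MV-Pochhammer}. Since the row is a polynomial the series must terminate; by the factorization \eqref{eq:MV-Pochhammer} and the invertibility of $C_\al^t+k$ from Lemma \ref{lem:eigenvalues_C}, this forces the diagonal matrix $M^\al_k(\la)$ of \eqref{eq:defMnla} to be singular for some $k$. With $\la=\la^\al_n(i)$ and the genericity statement of Lemma \ref{lem:eigenvalues_lambda}, the smallest such $k$ is exactly $n$, the matrices $[C_\al^t,U_\al^t,V_\al^t+\la]_k$ are invertible for $0\le k\le n$, and $\ker[C_\al^t,U_\al^t,V_\al^t+\la]_{n+1}$ is the line through $[C_\al^t,U_\al^t,V_\al^t+\la]_n^{-1}e_i$. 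Hence $P_0=c\,[C_\al^t,U_\al^t,V_\al^t+\la]_n^{-1}e_i$ for a constant $c$ still to be determined.

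The constant $c$ is fixed by monicity. Expanding $\tHe{U_\al^t,V_\al^t+\la}{C_\al^t}{u}=\sum_k \tfrac{u^k}{k!}[C_\al^t,U_\al^t,V_\al^t+\la]_k$ as in \eqref{eq:def2H1-Tirao}, the coefficient of $u^n$ in the $i$-th row equals $\tfrac1{n!}\bigl([C_\al^t,U_\al^t,V_\al^t+\la]_nP_0\bigr)^t=\tfrac{c}{n!}\,e_i^t$, whereas the monic normalization $P^n_n=I$, which by \eqref{eq:defpolsRn} makes the leading term of $R_n$ equal to $u^nI$, forces this to be $e_i^t$. Thus $c=n!$, and reading off the $j$-th entry yields the asserted formula for $\bigl(R_n(u)\bigr)_{ij}$ for every irrational $\al$.

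It remains to pass to arbitrary $\al\in\R$, and I expect this to be the main obstacle. The polynomials $R_n$ do not depend on $\al$, while the right-hand side is, for fixed $u$ with $|u|<1$, a rational function of $\al$: the entries of $C_\al,U_\al,V_\al$ and the eigenvalue $\la^\al_n(i)$ are polynomial in $\al$, and the only $\al$-dependence entering through an inverse is $[C_\al^t,U_\al^t,V_\al^t+\la]_n^{-1}$, which is defined and analytic off the finite set of rational $\al$ at which this matrix degenerates. Since the identity holds on the dense set $\R\setminus\Q$ and both sides are continuous in $\al$ on the domain where the right-hand side is defined, it extends there by continuity. The delicate point is precisely that the genericity input of Lemma \ref{lem:eigenvalues_lambda}, which underpins the entire argument for irrational $\al$, is what fails at the excluded rational values, so the claim ``for all $\al\in\R$'' has to be read as holding wherever the displayed expression makes sense (equivalently, up to removable singularities in $\al$).
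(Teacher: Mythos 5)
Your proof follows the paper's argument essentially verbatim: treat irrational $\al$ first (using Lemma \ref{lem:eigenvalues_C}, Lemma \ref{lem:eigenvalues_lambda} and the termination/kernel analysis preceding the theorem), fix the constant $n!$ by monicity, then pass to all $\al\in\R$ by continuity in $\al$ of the right-hand side. Your closing caveat is a fair refinement rather than a gap: the paper simply asserts continuity of the right-hand side, whereas the inverse $[C_\al^t,U_\al^t,V_\al^t+\la^\al_n(i)]_n^{-1}$ does degenerate at finitely many rational $\al$, so the statement is indeed best read as holding wherever the displayed expression is defined, the remaining values being removable singularities.
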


Note that the left hand side is independent of $\al$, which is not obvious for the right hand side.

\begin{proof} Let us first assume that $\al$ is irrational, so that the result follows from the 
considerations in this section using that the $i$-th row of $R_n(u)$ is a polynomial of (precise) 
degree $n$. The constant follows from monocity of $R_n$, so that $(R_n(u))_{ii}=u^n$. 

Note that the left hand side is independent of $\al$, and the right hand side is continuous  in 
$\al$. 
Hence the result follows for $\al\in \R$. 
\end{proof}

In the scalar case $\ell=0$ Theorem \ref{thm:monicRnasMVHF} reduces to 
\begin{equation}\label{eq:scalarcaseHGform}
R_n(u) \, = \, (-4)^{-n} (n+1)\, \rFs{2}{1}{-n, \, n+2}{\frac32}{u},
\end{equation}
which is the well-known hypergeometric expression for the monic Chebyshev polynomials,
see \cite[\S 2.5]{AndrAR}, \cite[(4.5.21)]{Isma}, \cite[(1.8.31)]{KoekS}.

\section{Three-term recurrence relation}\label{sec:3termrecurrencerelation}

Matrix-valued orthogonal polynomials satisfy a three-term recurrence relation, see e.g. 
\cite{DamaPS}, \cite{GrunT}.
In \cite[Thm.~4.8]{KRvP} we have determined the three-term recurrence relation for the closely 
related matrix-valued orthogonal polynomials explicitly in terms of Clebsch-Gordan coefficients. 
The matrix entries of the matrices occurring in the three-term recurrence relation have been given 
explicitly as sums of products of Clebsch-Gordan coefficients. The purpose of this section is to 
give simpler expressions for the monic matrix-valued orthogonal polynomials using the explicit 
expression in terms of Tirao's matrix-valued hypergeometric functions as established in Theorem 
\ref{thm:monicRnasMVHF}. 

From general theory the monic orthogonal polynomials $R_n\colon \R\to M_N(\C)$  satisfy a 
three-term recurrence relation
$uR_n(u)=R_{n+1}(u)+X_{n}R_n(u)+Y_nR_{n-1}(u)$, $n\geq 0$, 
where $R_{-1}=0$ and $X_n,Y_n\in M_{2\ell+1}(\C)$ are matrices depending on $n$ and not on $x$. 
Lemma \ref{lem:recurrence.coeff} should be compared to \cite[Lemma~2.6]{DamaPS}.

\begin{lem}\label{lem:recurrence.coeff} 
Let $\{R_n\}_{n\geq 0}$ be the sequence of monic orthogonal polynomials and write
$R_n(u)=\sum_{k=0}^n R^n_k \, u^k$, $R^n_k \in M_N(\C)$, and $R^n_n=I$.  
Then the coefficients $X_n$, $Z_n$ of the three-term
recurrence relation are given by 
\[
X_n\, =\, R^n_{n-1}-R^{n+1}_n,\qquad Y_n\, =\, R^n_{n-2}-R^{n+1}_{n-1}-X_nR^n_{n-1}. 
\]
\end{lem}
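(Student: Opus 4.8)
The plan is to substitute the explicit power-series expansions of the monic polynomials into the three-term recurrence relation $uR_n(u)=R_{n+1}(u)+X_nR_n(u)+Y_nR_{n-1}(u)$ and then compare matrix coefficients of like powers of $u$. Writing $R_n(u)=\sum_{k=0}^n R^n_k\,u^k$ with $R^n_n=I$, the left-hand side $uR_n(u)=\sum_{k=0}^n R^n_k\,u^{k+1}$ has degree $n+1$, matching the degree of $R_{n+1}$ on the right. The top coefficient (the $u^{n+1}$ term) is automatic: on the left it is $R^n_n=I$ and on the right it is $R^{n+1}_{n+1}=I$, so this contributes nothing. The useful information comes from the next two coefficients.

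First I would read off the coefficient of $u^n$ on both sides. On the left, $uR_n(u)$ contributes $R^n_{n-1}$. On the right, $R_{n+1}(u)$ contributes $R^{n+1}_n$, the term $X_nR_n(u)$ contributes $X_n R^n_n=X_n$ (since $R^n_n=I$), and $Y_nR_{n-1}(u)$ contributes nothing at this degree because $R_{n-1}$ has degree $n-1<n$. Equating gives
\[
R^n_{n-1}=R^{n+1}_n+X_n,
\]
which immediately yields $X_n=R^n_{n-1}-R^{n+1}_n$, the first claimed formula.

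Next I would extract the coefficient of $u^{n-1}$. On the left, $uR_n(u)$ contributes $R^n_{n-2}$. On the right, $R_{n+1}(u)$ contributes $R^{n+1}_{n-1}$, the term $X_nR_n(u)$ contributes $X_n R^n_{n-1}$, and $Y_nR_{n-1}(u)$ contributes $Y_n R^{n-1}_{n-1}=Y_n$ (using $R^{n-1}_{n-1}=I$). Equating gives
\[
R^n_{n-2}=R^{n+1}_{n-1}+X_nR^n_{n-1}+Y_n,
\]
and solving for $Y_n$ produces $Y_n=R^n_{n-2}-R^{n+1}_{n-1}-X_nR^n_{n-1}$, the second claimed formula.

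There is no real obstacle here; the argument is a direct coefficient comparison, and the only points requiring care are purely bookkeeping ones. One must keep the matrix factors in the correct left–right order, since $X_n$ and $Y_n$ multiply the polynomials from the left and matrix multiplication is noncommutative—for instance $X_nR_n(u)$ contributes $X_nR^n_{n-1}$ (not $R^n_{n-1}X_n$) to the $u^{n-1}$ coefficient. One must also track which terms of $R_{n-1}$ actually reach a given degree, so that the $Y_n$-term is correctly excluded from the $u^n$ equation but included in the $u^{n-1}$ equation. With these conventions observed, both formulas follow at once.
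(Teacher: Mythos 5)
Your proof is correct, but it takes a different route from the paper's. You compare coefficients of $u^n$ and $u^{n-1}$ directly in the polynomial identity $uR_n(u)=R_{n+1}(u)+X_nR_n(u)+Y_nR_{n-1}(u)$, which is legitimate since the recurrence is already guaranteed by general theory and holds identically in $u$; the only care needed is the left--right order of the matrix factors, which you handle correctly. The paper instead pairs both sides of the recurrence against $R_n$ and against $R_{n-1}$ under the matrix-valued inner product $\langle\cdot,\cdot\rangle_W$, evaluates each pairing in two ways using orthogonality and the expansions $\langle R_n,R_n\rangle=\langle u^n,R_n\rangle$, and then cancels, invoking invertibility of the squared-norm matrices to solve for $X_n$ and $Y_n$. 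Your argument is the more elementary of the two: it uses nothing beyond the existence of the recurrence and monicity, whereas the paper's follows the standard inner-product derivation (in the style of Damanik--Pushnitski--Simon) that keeps the orthogonality structure in view. Both yield the same formulas; the only bookkeeping point worth adding to yours is the convention $R^n_k=0$ for $k<0$ (and $R_{-1}=0$) so that the small cases $n=0,1$ are covered.
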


\begin{proof} Let $\langle\cdot, \cdot\rangle$ denote the matrix-valued inner product for which the 
monic polynomials are orthogonal. Using the three-term recursion, orthogonality relations and 
expanding the monic polynomial of degree $n+1$ gives
\begin{equation*}
\langle uR_n-X_{n}R_n-Y_nR_{n-1}, R_n\rangle\, =\, 
\langle R_{n+1}, R_n\rangle \, =\,  \langle u^{n+1}, R_n\rangle \, + \, R^{n+1}_n \langle u^{n}, R_n\rangle.
\end{equation*}
By the orthogonality relations the left hand side can be evaluated as
\begin{equation*}
\langle uR_n-X_{n}R_n-Y_nR_{n-1}, R_n\rangle\, =\, \langle u^{n+1}, R_n\rangle \, +\, R^n_{n-1}\langle u^{n}, R_n\rangle \, -\, X_n \langle R_n, R_n\rangle
\end{equation*}
and comparing the two right hand sides gives the required expression for $X_n$, since $\langle R_n, R_n\rangle= \langle u^n,  R_n\rangle$ is invertible. 

The expression for $Y_n$ follows by considering on the one hand
\begin{equation*}
\begin{split}
&\langle uR_n-X_{n}R_n-Y_nR_{n-1}, R_{n-1}\rangle\, =\, 
\langle R_{n+1}, R_{n-1}\rangle \, \\ 
=&\,  \langle u^{n+1}, R_{n-1}\rangle \, + \, R^{n+1}_n \langle u^{n}, R_{n-1}\rangle \, + \, R^{n+1}_{n-1} \langle u^{n-1}, R_{n-1}\rangle.
\end{split}
\end{equation*}
while on the other hand the left hand side also equals
\begin{equation*}
\begin{split}
&\langle u^{n+1}, R_{n-1}\rangle\, + \, R^n_{n-1} \langle u^n, R_{n-1}\rangle \, 
+ \, R^n_{n-2} \langle u^{n-1}, R_{n-1}\rangle\, \\ & \qquad  - \, X_n \langle u^n, R_{n-1}\rangle\,  -\, 
 \, X_n R^n_{n-1} \langle u^{n-1}, R_{n-1}\rangle\,
-\,Y_n\langle R_{n-1}, R_{n-1}\rangle
\end{split}
\end{equation*}
and using the expression for $X_n$ and cancelling common terms gives the required expression, since
$\langle R_{n-1}, R_{n-1}\rangle= \langle u^{n-1},  R_{n-1}\rangle$ is invertible. 
\end{proof}

In order to apply Lemma \ref{lem:recurrence.coeff} for the explicit  monic polynomials in this 
paper we need to calculate the coefficients, which is an application of Theorem 
\ref{thm:monicRnasMVHF}.

\begin{lem}\label{lem:coeff_Rnm1_Rn}
 Let $\{R_n\}_{n\geq 0}$ be the monic polynomials with respect to $Z$ on $[0,1]$. Then
\begin{equation*}
\begin{split}
R^{n}_{n-1}&=\sum_{j=0}^n \frac{jn}{4(n+j)} E_{j,j-1} -\sum_{j=0}^n \frac{n}{2} E_{j,j} +
\sum_{i=0}^n \frac{n(2\ell-j)}{4(2\ell-j+n)}E_{j,j+1} \\
R^{n}_{n-2}&=
\sum_{j=0}^n \frac{n(n-1)j(j-1)}{32(n+j)(n+j-1)} E_{j,j-2}
-\sum_{j=0}^n \frac{n(n-1)j}{8(n+j)} E_{j,j-1} \\
&\quad +\sum_{j=0}^n \frac{n(n-1)(3j^2-6\ell j-2n^2+n-4n\ell)}{16(n+j)(i-2\ell-n)} E_{j,j} \\
&\quad -\sum_{j=0}^n \frac{n(n-1)(2\ell-j)}{8(2\ell+n-j)} E_{j,j+1} \\
&\quad +\sum_{j=0}^n \frac{n(n-1)(2\ell-j)(2\ell-j-1)}{32(2\ell-j+n-1)(2\ell+n-j)}E_{j,j+2} 
\end{split}
\end{equation*}
\end{lem}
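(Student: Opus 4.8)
The plan is to read the coefficients $R^n_{n-1}$ and $R^n_{n-2}$ directly off the matrix-valued hypergeometric expression in Theorem \ref{thm:monicRnasMVHF}. Fix an irrational $\al$, so that by Lemma \ref{lem:eigenvalues_lambda} all the Pochhammer matrices below are invertible, and abbreviate $P_0^{(i)}=n!\,[C_\al^t,U_\al^t,V_\al^t+\la^\al_n(i)]_n^{-1}e_i$, so that the $i$-th row of $R_n(u)$ is $(\tHe{U_\al^t,V_\al^t+\la^\al_n(i)}{C_\al^t}{u}\,P_0^{(i)})^t$. By the power series \eqref{eq:def2H1-Tirao}, the coefficient of $u^k$ in this row is $\frac{1}{k!}([C_\al^t,U_\al^t,V_\al^t+\la^\al_n(i)]_k\,P_0^{(i)})^t$, so I need only evaluate this at $k=n-1$ and $k=n-2$.

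The key device is to run the recursion in \eqref{eq:def2H1-Tirao} \emph{backwards}, using the normalisation $[C_\al^t,U_\al^t,V_\al^t+\la^\al_n(i)]_n\,P_0^{(i)}=n!\,e_i$ built into $P_0^{(i)}$. Writing the recursion as $[C,U,V]_m=(M_m)^{-1}(C+m)[C,U,V]_{m+1}$ with $M_m:=m^2+m(U_\al^t-1)+V_\al^t+\la^\al_n(i)=M_m^\al(\la^\al_n(i))$ in the notation of \eqref{eq:defMnla}, one step gives
\[
[C_\al^t,U_\al^t,V_\al^t+\la^\al_n(i)]_{n-1}\,P_0^{(i)}=n!\,(M_{n-1})^{-1}(C_\al^t+n-1)\,e_i,
\]
and a second step gives the analogous two-factor product $n!\,(M_{n-2})^{-1}(C_\al^t+n-2)(M_{n-1})^{-1}(C_\al^t+n-1)e_i$ for the $(n-2)$-nd term. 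Dividing by $(n-1)!$ and $(n-2)!$ turns $n!/(n-1)!=n$ and $n!/(n-2)!=n(n-1)$ into the overall prefactors visible in the statement.

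The crucial structural facts are that each $M_m=M_m^\al(\la^\al_n(i))=\la^\al_n(i)-\La_m(D_\al)$ is \emph{diagonal}, with $j$-th entry $\la^\al_n(i)-\la^\al_m(j)$ (nonzero by Lemma \ref{lem:eigenvalues_lambda}), and that $C_\al^t$ is tridiagonal. Hence $(M_{n-1})^{-1}(C_\al^t+n-1)$ is tridiagonal, giving at once the band $E_{j,j-1},E_{jj},E_{j,j+1}$ of $R^n_{n-1}$, and more precisely, after transposing (which merely swaps the two matrix indices),
\[
(R^n_{n-1})_{ij}=n\,\frac{(C_\al)_{ij}+(n-1)\delta_{ij}}{\la^\al_n(i)-\la^\al_{n-1}(j)}.
\]
Likewise the $(n-2)$-nd expression is a product of two tridiagonal matrices, hence pentadiagonal, matching the band $E_{j,j-2},\dots,E_{j,j+2}$ of $R^n_{n-2}$. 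It then remains to substitute the explicit tridiagonal entries of $C_\al=C+\al B_0$ and the eigenvalues $\la^\al_m(j)=(\La_m(D))_{jj}+\al(\La_m(E))_{jj}$ from Corollary \ref{cor:thmdifferentialoperatorsP} and simplify. A useful guiding principle is that the left-hand side is independent of $\al$ (as noted after Theorem \ref{thm:monicRnasMVHF}), so the $\al$-dependence appearing in the factors $C_\al$ and in each denominator $\la^\al_n(i)-\la^\al_m(j)$ must cancel; this both organises the computation and serves as a check.

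The main obstacle is purely computational and concentrated in $R^n_{n-2}$: each of its five bands is a sum of the two-step products, the denominators $\la^\al_n(i)-\la^\al_{n-1}(j)$ and $\la^\al_n(i)-\la^\al_{n-2}(j)$ are quadratic in the indices and carry the parameter $\al$, and verifying that these combine into the stated rational functions — in particular the diagonal entry with numerator $3j^2-6\ell j-2n^2+n-4n\ell$, together with the required disappearance of $\al$ — amounts to a careful algebraic bookkeeping of exactly the kind delegated to computer algebra elsewhere in this paper.
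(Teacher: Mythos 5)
Your proposal is correct and follows essentially the same route as the paper: both read off the coefficients of $u^{n-1}$ and $u^{n-2}$ from Theorem \ref{thm:monicRnasMVHF}, run the recursion \eqref{eq:MV-Pochhammer} backwards to obtain $(R^n_{n-1})_{ij}=n\bigl(M_{n-1}^\al(\la_n^\al(i))^{-1}(C_\al^t+n-1)e_i\bigr)^t_j$ and the two-factor analogue for $R^n_{n-2}$, exploit the diagonality of the $M_m$'s and tridiagonality of $C_\al^t$ to get the tri- and pentadiagonal band structure, and observe that the $\al$-dependence must cancel. Your explicit entrywise formula and the spot-checkable cancellation of $\al$ are a slightly more detailed account of what the paper dismisses as ``a straightforward but tedious calculation.''
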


\begin{proof} We can calculate $R^{n}_{n-1}$ by considering the coefficients of $u^{n-1}$ using the 
expression in Theorem \ref{thm:monicRnasMVHF}. This gives 
\begin{multline*}
(R^n_{n-1})_{ij}\, =\,
\frac{n!}{(n-1)!} \left( [C_\al^t, U_\al^t, V_\al^t+\la_n^\al(i)]_{n-1}
[C_\al^t, U_\al^t, V_\al^t+\la_n^\al(i)]_{n}^{-1} e_i
\right)^t_j \, \\
= \, n \, \left(M_{n-1}^\al\bigl(\la_n^\al(i)\bigr)^{-1} \, (C_\al^t+n-1) e_i\right)^t_j
\end{multline*}
using the recursive definition \eqref{eq:MV-Pochhammer} of $[C_\al^t, U_\al^t, V_\al^t+\la_n^\al(i)]_{n}$. Note that 
$M_{n-1}^\al\bigl(\la_n^\al(i)\bigr)$ is indeed invertible by Lemma \ref{lem:eigenvalues_lambda} for irrational $\al$. 
The explicit expression of the right hand side gives the result after a straightforward 
computation, since the resulting matrix is tridiagonal.

We can calculate $R^n_{n-2}$ analogously, $(R^n_{n-2})_{ij} =$
\[
\, n(n-1) \, \left(M_{n-2}^\al\bigl(\la^\al_n(i)\bigr)^{-1} (C_\al^t+n-2) M_{n-1}^\al\bigl(\la_n^\al(i)\bigr)^{-1} \, (C_\al^t+n-1) e_i\right)^t_j
\]
and a straightforward but tedious calculation gives the result. Note that $R^n_{n-2}$ is a 
five-diagonal matrix, since it is the product of two tridiagonal matrices. 
\end{proof}

Note that even though we have used the additional degree of freedom $\al$ in the proof of Lemma 
\ref{lem:coeff_Rnm1_Rn}, the resulting expressions are indeed independent of $\al$. 

Now we are ready to obtain the coefficients in the recurrence relation satisfied by the polynomials 
$R_n$.

\begin{thm}\label{thm:three_term_for_Rm}
For any $\ell\in\frac{1}{2}\mathbb{N}$ the monic orthogonal polynomials
$R_n$ satisfy the three-term recurrence relation
\begin{equation*}
u\, R_{n}(u)\, =\, R_{n+1}(u)\, +\, X_{n}\, R_{n}(u)\, +\, Y_{n}\, R_{n-1}(u),
\end{equation*}
where the matrices $X_n$, $Y_n$ are given by
\begin{equation*}
\begin{split}
X_n&\, =\, -\sum_{i=0}^{2\ell} \left[ \frac{i^2E_{i,i-1} }{4(n+i)(n+i+1)} -\frac{E_{i,i}}{2}+
\frac{(2\ell-i)^2E_{i,i+1}}{4(2\ell+n-i)(2\ell+n-i+1)}\right ], \\
Y_n&\, =\, \sum_{i=0}^{2\ell} \frac{n^2(2\ell+n+1)^2}{16(n+i)(n+i+1)(2\ell+n-i)(2\ell+n-i+1)} E_{i,i}.
\end{split}
\end{equation*}
\end{thm}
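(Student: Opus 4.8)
The plan is to combine the two preceding lemmas. Lemma~\ref{lem:recurrence.coeff} reduces the problem to the two sub-leading coefficients of the monic polynomials, namely $X_n = R^n_{n-1} - R^{n+1}_n$ and $Y_n = R^n_{n-2} - R^{n+1}_{n-1} - X_n R^n_{n-1}$, while Lemma~\ref{lem:coeff_Rnm1_Rn} supplies closed forms for $R^n_{n-1}$ and $R^n_{n-2}$. Since $R^{n+1}_n$ and $R^{n+1}_{n-1}$ are obtained from those same formulas by the substitution $n \mapsto n+1$, every ingredient is explicit and the proof becomes a finite rational-function computation.

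First I would compute $X_n$. Both $R^n_{n-1}$ and $R^{n+1}_n$ are tridiagonal, so $X_n$ is tridiagonal and can be read off diagonal by diagonal. The diagonal contribution is $-\frac{n}{2} - (-\frac{n+1}{2}) = \frac{1}{2}$, matching the claimed entry $\frac{1}{2}$. For the subdiagonal $E_{j,j-1}$ I would form $\frac{jn}{4(n+j)} - \frac{j(n+1)}{4(n+j+1)}$; after clearing denominators the numerator collapses to $-j^2$, giving $-\frac{j^2}{4(n+j)(n+j+1)}$, which is exactly the stated coefficient. The superdiagonal $E_{j,j+1}$ is handled identically with $2\ell-j$ in place of $j$, yielding $-\frac{(2\ell-j)^2}{4(2\ell+n-j)(2\ell+n-j+1)}$. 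This settles the formula for $X_n$.

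The formula for $Y_n$ is the substantive part. Here $R^n_{n-2}$ and $R^{n+1}_{n-1}$ are five-diagonal, and $X_n R^n_{n-1}$ is a product of two tridiagonal matrices, hence also five-diagonal; so a priori $Y_n$ carries five diagonals, and one must show that the four off-diagonals vanish while identifying the surviving diagonal. For each off-diagonal I would assemble the three contributions and factor out a common prefactor, reducing to a single bracketed sum of partial fractions. For instance, the $E_{j,j-2}$ coefficient, after factoring out $\frac{nj(j-1)}{32(n+j)}$, becomes a fraction whose numerator is $(n-1)(n+j+1) - (n+1)(n+j-1) + 2j$, which simplifies to $0$. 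The two superdiagonals vanish by the same mechanism, or, more economically, via the symmetry $J R_n J = R_n$, equivalently $(R^n_k)_{ij} = (R^n_k)_{2\ell-i,2\ell-j}$, which interchanges the sub- and super-diagonal computations. What remains is the diagonal, obtained by combining $(R^n_{n-2})_{jj}$, $(R^{n+1}_{n-1})_{jj}$ and the three products making up $(X_n R^n_{n-1})_{jj}$ into a single rational function of $j$, $n$ and $\ell$. Simplifying that expression to the product form $\frac{n^2(2\ell+n+1)^2}{16(n+j)(n+j+1)(2\ell+n-j)(2\ell+n-j+1)}$ is where I expect the main difficulty to lie, since it requires the heaviest bookkeeping; it is precisely the kind of identity for which the computer-algebra verification mentioned in the introduction is useful as a check.
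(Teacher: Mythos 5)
Your proposal is correct and follows essentially the same route as the paper: apply Lemma \ref{lem:recurrence.coeff} with the explicit coefficients $R^n_{n-1}$, $R^n_{n-2}$ from Lemma \ref{lem:coeff_Rnm1_Rn} (and their $n\mapsto n+1$ shifts), and verify by rational-function algebra that the off-diagonals of $Y_n$ cancel while the diagonal simplifies to the stated product; your spot-checks of the $E_{j,j-1}$ and $E_{j,j-2}$ cancellations are accurate, and the $J$-symmetry shortcut for the superdiagonals is a small economy the paper does not use but is perfectly valid since $JX_nJ=X_n$ and $JR^n_kJ=R^n_k$.
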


\begin{proof} 
This is a straightforward computation using Lemma \ref{lem:recurrence.coeff} and Lemma \ref{lem:coeff_Rnm1_Rn}. The calculation of $X_n$ is straightforward from 
Lemma \ref{lem:recurrence.coeff} and Lemma \ref{lem:coeff_Rnm1_Rn}.  In order to calculate $Y_n$ we need 
$X_n R_{n-1}$. A calculation shows
\begin{multline*}
X_nR^n_{n-1}=
-\sum_{j=0}^{2\ell} \frac{nj^2(j-1)}{16(n+j-1)(n+j)(n+j+1)} E_{j,j-2} 
+ \\
\sum_{j=0}^{2\ell} \frac{nj(n+2j+1)}{8(n+j)(n+j+1)} E_{j,j-1} + \\
\sum_{j=0}^{2\ell}\hskip-.15truecm \left(
\frac{-i^2n(2\ell-i+1)}{16(n+i)(n+i+1)(2\ell-i+1+n)} \right. 
-\frac{n}{4}- \\ \left. \qquad\qquad 
\frac{(2\ell-i)^2(i+1)n}{16(2\ell-i+1+n)(2\ell-i+n)(n+i+1)} \right) E_{jj} \\
 \qquad\quad -\sum_{j=0}^{2\ell} \frac{n(2\ell-i)(4\ell-2j+n+1)}{8(2\ell+n-j)(2\ell+n-j+1)} E_{j,j+1} \\
+\sum_{j=0}^{2\ell} \frac{n(2\ell-j)^2(2\ell-j+1)}{16(2\ell-j+n-1)(2\ell-j+n)(2\ell-j+n+1)}E_{j,j+2}
\end{multline*}
Now Lemma \ref{lem:recurrence.coeff} and a computation show that $Y_n$ reduces to a tridiagonal matrix.
\end{proof}

Now \eqref{eq:defpolsRn} and Theorem \ref{thm:three_term_for_Rm} give the three-term recurrence 
\begin{equation}\label{eq:3trrmonicPns}
x\, P_n(x)\, =\, P_{n+1}(x)\,+\,(1-2X_n)\, P_n(x)\, +\, 4Y_nP_{n-1}(x)
\end{equation}
for the monic orthogonal polynomials with respect to the matrix-valued weight $W$ on $[-1,1]$. The 
case $\ell=0$ corresponds to the three-term recurrence for the monic Chebyshev polynomials $U_n$. 
Note moreover, that $\lim_{n\to \infty} X_n = \frac12$ and $\lim_{n\to \infty} Y_n = \frac{1}{16}$, 
so that the monic matrix-valued orthogonal polynomials fit in the Nevai class, see \cite{DuraJAT}. 
Note the matrix-valued orthogonal polynomials $P_n$ in this paper are considered as matrix-valued 
analogues of the Chebyshev polynomials of the second kind, because of the group theoretic 
interpretation \cite{KRvP} and
Section \ref{sec:grouptheoreticderivation}, but that these polynomials are not matrix-valued 
Chebyshev polynomials in the sense of \cite[\S 3]{DuraJAT}.

Using the three-term recurrence relation \eqref{eq:3trrmonicPns} and \eqref{eq:ortho-monicP} we get 
\begin{equation}
\begin{split}
4Y_{n}H_{n-1} \, &= \, \int_{-1}^1 xP_n(x)W(x)P_{n-1}(x)^\ast dx\, \\ &= \, 
\int_{-1}^1 P_n(x)W(x)(xP_{n-1}(x))^\ast dx\, \, =\, H_n
\end{split}
\end{equation}
analogous to the scalar-valued case. Since $H_0$ is determined in \eqref{eq:H0explicitvalue} we obtain $H_n$. 

\begin{cor}\label{cor:squarednormHn} The squared norm matrix $H_n$ is
\[
(H_n)_{ij}\, = \, \de_{ij} \frac{\pi}{2} \frac{(n!)^2\, (2\ell+1)_{n+1}^2}{(i+1)_n^2 \, (2\ell-i+1)_n^2}
\frac{2^{-2n}}{(n+i+1)(2\ell-i+n+1)}
\]
and $JH_nJ=H_n$. 
\end{cor}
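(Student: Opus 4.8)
The plan is to iterate the relation $4Y_nH_{n-1}=H_n$ derived just above the statement, starting from the known value of $H_0$. First I would observe that the matrix $Y_n$ from Theorem \ref{thm:three_term_for_Rm} is diagonal and that $H_0$ in \eqref{eq:H0explicitvalue} is diagonal; hence by induction on $n$ every $H_n$ is diagonal, and the recursion decouples into scalar recursions for the diagonal entries. Unwinding it gives
\[
(H_n)_{ii}\, =\, \Bigl(\prod_{k=1}^n 4\,(Y_k)_{ii}\Bigr)\,(H_0)_{ii},
\]
so the whole computation reduces to evaluating a telescoping product of rational functions of $k$.

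The main step is then to carry out this product. Substituting the explicit diagonal entry $(Y_k)_{ii}=\frac{k^2(2\ell+k+1)^2}{16(k+i)(k+i+1)(2\ell+k-i)(2\ell+k-i+1)}$, the constant $4/16=1/4$ produces the anticipated $2^{-2n}$, and the four families of linear factors in $k$ telescope into Pochhammer symbols: $\prod_{k=1}^n k^2=(n!)^2$, $\prod_{k=1}^n(2\ell+k+1)=(2\ell+2)_n$, $\prod_{k=1}^n(k+i)=(i+1)_n$, $\prod_{k=1}^n(k+i+1)=(i+2)_n$, together with the analogous two products carrying the shift by $2\ell-i$. After multiplying by $(H_0)_{ii}=\frac{\pi}{2}\frac{(2\ell+1)^2}{(i+1)(2\ell-i+1)}$ I would absorb factors using $(2\ell+1)^2\bigl((2\ell+2)_n\bigr)^2=\bigl((2\ell+1)_{n+1}\bigr)^2$ and $(i+1)(i+2)_n=(i+1)_{n+1}=(i+1)_n(i+n+1)$ (and its $2\ell-i$ analogue) to land on the stated closed form. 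This is routine Pochhammer bookkeeping; the only place to be careful is keeping track of which products carry the extra shift, so that the $2^{-2n}$ and the two single factors $(n+i+1)$ and $(2\ell-i+n+1)$ come out in the right places.

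Finally, for $JH_nJ=H_n$ I have two equally short routes. One is to read it off the explicit formula: conjugation by $J$ reverses indices, so for the diagonal matrix $H_n$ it sends $(H_n)_{ii}$ to $(H_n)_{2\ell-i,2\ell-i}$, and the closed form above is manifestly invariant under $i\mapsto 2\ell-i$, which interchanges $(i+1)_n\leftrightarrow(2\ell-i+1)_n$ and $(n+i+1)\leftrightarrow(2\ell-i+n+1)$. Alternatively, and independently of the explicit evaluation, I would use $JW(x)J=W(x)$ and $JP_n(x)J=P_n(x)$ together with $J^\ast=J$ and $J^2=I$: inserting $J^2=I$ into $JH_nJ=\int_{-1}^1 JP_n(x)W(x)P_n(x)^\ast J\,dx$ and regrouping the factors gives $\int_{-1}^1 P_n(x)W(x)P_n(x)^\ast\,dx=H_n$.
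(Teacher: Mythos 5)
Your proposal is correct and follows exactly the route the paper takes: iterating the relation $H_n=4Y_nH_{n-1}$ (stated just before the corollary) from the explicit $H_0$ in \eqref{eq:H0explicitvalue}, with the Pochhammer bookkeeping you describe carrying through as claimed, and either of your two arguments for $JH_nJ=H_n$ being valid. The paper merely leaves the telescoping product implicit; you have filled it in correctly.
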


In \cite[Thm.~4.8]{KRvP} we have stated the three-term recurrence relation for the polynomials 
$Q^\ell_n(a)$, $a\in A_\ast$, see also Section \ref{sec:grouptheoreticderivation} of this paper. 
Apart from a relabeling of the orthonormal basis the monic polynomials corresponding to $Q_n^\ell$ 
are precisely the polynomials $P_n$, see \cite[\S 6.2, (6.4)]{KRvP} for the precise identification
\begin{equation}\label{eq:def_polynomials_Pd}
P_d(x)_{n,m}=\Upsilon_d^{-1} Q_d(a_{\arccos{x}})_{-\ell+n,-\ell+m}, \quad n,m\in\{0,1,\ldots,2\ell\},
\end{equation}
see also Section \ref{sec:grouptheoreticderivation}, where $\Upsilon_d$ in \eqref{eq:def_polynomials_Pd} is the leading coefficient of $Q^\ell_d$. 

\begin{cor}\label{cor:thm-three_term_for_Rm} The polynomials $Q^\ell_n$ as in \cite[\S 4]{KRvP} satisfy the recurrence 
\[
\phi(a)\, Q_n^\ell(a)\, =\, A_n\, Q_{n+1}^\ell(a) \, +\,  B_n \, Q_{n}^\ell(a) 
\, +\, C_n\,Q_{n-1}^\ell(a)
\]
where 
\begin{gather*}
A_n\, =\, \sum_{q=-\ell}^\ell \frac{(n+1)(2\ell+n+2)}{2(\ell-q+n+1)(\ell+q+n+1)} E_{q,q},  \\
B_n\, =\, \sum_{q=-\ell}^\ell  \frac{(\ell-q+1)(\ell+q)}{2(\ell-q+n+1)(\ell+q+n+1)}E_{q,q-1} \\
\qquad\qquad\qquad\qquad 
+ \sum_{q=-\ell}^\ell  \frac{(\ell+q+1)(\ell-q)}{2(\ell-q+n+1)(\ell+q+n+1)}E_{q,q+1} \\
C_n=\sum_{q=-\ell}^\ell \frac{n(2\ell+n+1)}{2(n+q+\ell+1)(n-q+\ell+1)} E_{q,q}.
\end{gather*}
\end{cor}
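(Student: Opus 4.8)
The plan is to transport the three-term recurrence \eqref{eq:3trrmonicPns} for the monic polynomials $P_n$ back to the polynomials $Q_n^\ell$ via the identification \eqref{eq:def_polynomials_Pd}. Recall from \cite{KRvP} that $\Upsilon_n$ is the \emph{diagonal} leading coefficient of $Q_n^\ell$, so that, after collecting matrix entries and writing $i=q+\ell$ for the index shift, \eqref{eq:def_polynomials_Pd} reads as a matrix identity $Q_n^\ell(a_{\arccos x}) = \Upsilon_n\, P_n(x)$, the coordinate $\phi$ being normalized so that $\phi(a_{\arccos x})=x$. The scalar case $\ell=0$ already indicates this: there $Q_n^0$ is proportional to $U_n/(n+1)$, and the stated $A_n,C_n$ reduce precisely to the recurrence coefficients of this rescaling of the Chebyshev polynomials.

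First I would substitute $P_n=\Upsilon_n^{-1}Q_n^\ell(a_{\arccos x})$ into \eqref{eq:3trrmonicPns} and multiply on the left by $\Upsilon_n$. This is the standard passage between monic and non-monic matrix-valued orthogonal polynomials and immediately yields a three-term recurrence for the $Q_n^\ell$ with
\[
A_n=\Upsilon_n\Upsilon_{n+1}^{-1},\qquad B_n=\Upsilon_n(1-2X_n)\Upsilon_n^{-1},\qquad C_n=4\,\Upsilon_n Y_n\Upsilon_{n-1}^{-1},
\]
where $X_n,Y_n$ are as in Theorem \ref{thm:three_term_for_Rm}. It then remains only to evaluate these three expressions explicitly.

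The second step is this explicit computation. Writing $(\Upsilon_n)_{qq}=\upsilon_n(q)$ in the shifted indexing $q\in\{-\ell,\dots,\ell\}$, the relevant quantities are the ratios $\upsilon_n(q)/\upsilon_{n+1}(q)$, $\upsilon_n(q)/\upsilon_n(q\pm1)$ and $\upsilon_n(q)/\upsilon_{n-1}(q)$, which telescope into ratios of Pochhammer symbols coming from the form of $\Upsilon_n$ in \cite{KRvP}. For $A_n$ and $C_n$, both diagonal, only the $n$-ratios enter, and combining them with the diagonal entries of $Y_n$ gives the stated rational functions after cancellation. For $B_n$ I would use that $1-2X_n$ has vanishing diagonal (since $(X_n)_{ii}=\tfrac12$) and tridiagonal off-diagonal part; conjugating by the diagonal $\Upsilon_n$ multiplies the $(q,q\mp1)$-entry by $\upsilon_n(q)/\upsilon_n(q\mp1)$, and these ratios turn the squares $(\ell+q)^2$ and $(\ell-q)^2$ produced by $X_n$ into the factorized numerators $(\ell-q+1)(\ell+q)$ and $(\ell+q+1)(\ell-q)$ of the claimed $B_n$.

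The main obstacle is bookkeeping rather than conceptual: one must import the precise diagonal form of $\Upsilon_n$ from \cite{KRvP}, carry the index shift $i=q+\ell$ consistently through the entries of $X_n$ and $Y_n$, and check that the Pochhammer ratios arising from the conjugation telescope exactly into the compact rational expressions of the statement. A useful safeguard is that any $q$-independent overall normalization of $\Upsilon_n$ cancels in all three coefficients, so only the $q$- and $n$-dependence of $\Upsilon_n$ matters; this both shortens the algebra and protects against normalization errors.
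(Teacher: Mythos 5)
Your proposal is correct and follows essentially the same route as the paper: the paper's proof likewise substitutes the identification $P_n=\Upsilon_n^{-1}Q_n^\ell$ into the monic recurrence and reads off $A_n=\Upsilon_n\Upsilon_{n+1}^{-1}$, $B_n=\Upsilon_n(1-2X_n)\Upsilon_n^{-1}$, $C_n=\Upsilon_n(4Y_n)\Upsilon_{n-1}^{-1}$, finishing with the same explicit computation from Theorem \ref{thm:three_term_for_Rm}. Your additional remarks on the index shift $i=q+\ell$ and the cancellation of $q$-independent normalizations of $\Upsilon_n$ are sound bookkeeping observations consistent with the paper's ``straightforward computation.''
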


\begin{proof} We use $A_n=\Upsilon_n \Upsilon_{n+1}^{-1}$, $B_n=\Upsilon_n(1-2X_n) \Upsilon_{n}^{-1}$,
$C_n=\Upsilon_n(4Y_n) \Upsilon_{n-1}^{-1}$ and Theorem \ref{thm:three_term_for_Rm} to obtain the result from a straighforward computation. 
\end{proof}

Recall from \cite[Thm.~4.8, (3.2)]{KRvP} that the matrix entries of the matrices $A_n$, $B_n$ and 
$C_n$ are explicitly known as a square of a double sum with summand the product of four 
Clebsch-Gordan coefficients, hence Corollary \ref{cor:thm-three_term_for_Rm} leads to an explicit 
expression for this square. 

\section{The matrix-valued orthogonal polynomials related to Gegenbauer and Racah  polynomials}\label{sec:MVOPintermsofGegenbauerpols}

The LDU-decomposition of the weight $W$ of Theorem \ref{thm:LDUdecompW} has the weight functions of 
the Gegenbauer polynomials in the diagonal $T$, so we can expect a link between the matrix-valued 
polynomials $P_n(x)L(x)$ and
the Gegenbauer polynomials. We cannot do this via the orthogonality relations and the weight 
function, since the matrix $L$ also depends on $x$. Instead we use an approach based on the 
differential operators $\tilde D$ and $\tilde E$ of Section \ref{sec:differentialoperators}, and 
because of the link to the matrix-valued hypergeometric differential operator as in Theorem 
\ref{thm:monicRnasMVHF} we switch to the matrix-valued orthogonal polynomials $R_n$ and $x=1-2u$. 
It turns out that the matrix entries of $P_n(x)L(x)$ can be given as a product of a Racah 
polynomial times a Gegenbauer polynomial, see Theorem \ref{thm:cRnasGegenbauertimesRacah}. 

We use the differential operators $D$ and $E$ of Corollary \ref{cor:thmdifferentialoperatorsP}, and 
as in Section \ref{sec:HGfunctions} it is handier to work with the second-order differential 
operator $D_{-2\ell} = D-2\ell E$. By Theorem \ref{thm:LDUdecompW} we have $W(x) = L(x) T(x) 
L(x)^t$, hence 
$Z(u) = L(1-2u) T(1-2u) L(1-2u)^t$. For this reason we look at the differential operator conjugated 
by $M(u) = L(1-2u)$. 

In general, for $D= \frac{d^2}{du^2}F_2(u) + \frac{d}{du}F_1(u) + F_0(u)$ a second order matrix-
valued differential operator, conjugation with the matrix-valued function $M$, which we assume 
invertible for all $u$, gives
\begin{multline*}
M^{-1}DM \, =\, \frac{d^2}{du^2}M^{-1}F_2M + \frac{d}{du}\left( M^{-1}F_1M + 2\frac{dM^{-1}}{du}F_0M\right) \\
+ \left( M^{-1}F_0M +\frac{dM^{-1}}{du}F_1M + \frac{d^2M^{-1}}{du^2}F_0M\right).
\end{multline*}
Note that differentiating $M^{-1}M=I$ gives $\frac{dM^{-1}}{du}=-M^{-1}\frac{dM}{du}M^{-1}$, and similarly we find 
$\frac{d^2M^{-1}}{du^2}=-M^{-1}\frac{d^2M}{du^2}M^{-1} + 2M^{-1}\frac{dM}{du}M^{-1}\frac{dM}
{du}M^{-1}$. We are investigating the possibility of $M^{-1}DM$ being a diagonal matrix-valued 
differential operator. We now assume that $F_2(u) = u(1-u)$, so that $M^{-1}F_2M=u(1-u)$. A 
straightforward calculation using this assumption and the calculation of the derivatives of 
$M^{-1}$ shows that $M^{-1}DM = u(1-u) \frac{d^2}{du^2} + \frac{d}{du}T_1 + T_0$ with $T_0$ and 
$T_1$ matrix-valued functions if and only if the following equations \eqref{eq:diagMVDOcond1}, 
\eqref{eq:diagMVDOcond2} hold:
\begin{equation}\label{eq:diagMVDOcond1}
F_0M\,-\, \frac{dM}{du}T_1\, -\,  u(1-u)\, \frac{d^2M}{du^2}\, = \, MT_0
\end{equation}
\begin{equation}\label{eq:diagMVDOcond2}
F_1M \, - \, 2u(1-u) \frac{dM}{du}\, = \, MT_1.
\end{equation}
Of course, $T_0$ and $T_1$ need not be diagonal in general, but this is the case of interest. 

\begin{prop}\label{prop:OperatorDconjugatedbyM}
The differential operator $\cD = M^{-1} D_{-2\ell} M$ is the 
diagonal differential operator 
\[
\cD\, =\,  u(1-u)\frac{d^2}{du^2}
\, +\, \left(\frac{d}{du}\right) \, T_1(u)+ T_0,
\]
where
\[
T_1(u) = \frac12 T_1^1-  u\, T_1^1, \quad T_1^1\, = \, \sum_{i=0}^{2\ell} (2i+3)E_{i,i}, 
\quad T_0 \, =\, \sum_{i=0}^{2\ell} (2\ell-i)(2\ell+i+2) E_{i,i}
\]
Moreover, $\cR_n(u)\, = \, R_n(u)M(u)$ satisfies 
\[
\cR_n\, \cD\, = \, \La_n(\cD)\, \cR_n, \qquad  \La_n(\cD)\, = \, \La_n(D) -2\ell\, \La_n(E).
\]
\end{prop}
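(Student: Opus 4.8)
The plan is to verify the two displayed equations \eqref{eq:diagMVDOcond1} and \eqref{eq:diagMVDOcond2} directly, using the candidate diagonal matrices $T_0$ and $T_1(u)$ from the statement, since by the general computation preceding the proposition these two equations are precisely equivalent to $\cD = M^{-1}D_{-2\ell}M$ being a second-order operator with leading coefficient $u(1-u)$ and diagonal lower-order coefficients. The inputs are known: $M(u) = L(1-2u)$ with the explicit entries $L(x)_{mk}$ of Theorem \ref{thm:LDUdecompW}, and the coefficient matrices $F_2(u)=u(1-u)$, $F_1(u)=C_{-2\ell}-uU_{-2\ell}$, $F_0(u)=-V_{-2\ell}$ of the operator $D_{-2\ell}=D-2\ell E$, which I read off from Corollary \ref{cor:thmdifferentialoperatorsP} via the substitutions $C_\alpha=C+\alpha B_0$, $U_\alpha=U-\alpha B_1$, $V_\alpha=V-\alpha A_0$ at $\alpha=-2\ell$.

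First I would compute the first and second derivatives of $M(u)$ in closed form. The key simplification, noted after Theorem \ref{thm:LDUdecompW}, is the uniform expression $L(x)_{mk}=\frac{m!\,2^{-k}(2k+1)!}{(k!)^2(m+k+1)!}\frac{d^kU_m}{dx^k}(x)$, so differentiating $M(u)=L(1-2u)$ in $u$ just raises the order of the $U_m$-derivative and shifts the column index; the differentiation contiguity relation $\frac{d}{dx}C^{(\alpha)}_n(x)=2\alpha\,C^{(\alpha+1)}_{n-1}(x)$ for Gegenbauer polynomials gives the entries of $\frac{dM}{du}$ and $\frac{d^2M}{du^2}$ again as Gegenbauer polynomials, one parameter up and one or two degrees down. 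This is the structural reason the computation closes: every matrix appearing in \eqref{eq:diagMVDOcond1}--\eqref{eq:diagMVDOcond2} is, entrywise, a scalar multiple of $(1-x^2)^{\text{power}}C^{(k+j)}_{\cdot}(x)$, and matching them reduces to identities among these scalars plus the Gegenbauer three-term recurrence and the differential relation above.

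With these derivatives in hand, \eqref{eq:diagMVDOcond2} becomes an equality of lower-triangular matrix-valued functions whose $(m,k)$-entries are explicit combinations of Gegenbauer polynomials; since $T_1(u)$ is diagonal, the right-hand side is $M(u)T_1(u)$, a single column-scaling of $M$. I would verify the entrywise identity by expressing both sides in the Gegenbauer basis and using the standard contiguous relations (degree-lowering/raising and the three-term recurrence) to reconcile them, the scalar coefficients being the $c_k(\ell)$-free combinatorial factors appearing in $L(x)_{mk}$. The verification of \eqref{eq:diagMVDOcond1} with the diagonal $T_0$ proceeds in the same spirit, now also using \eqref{eq:diagMVDOcond2} to substitute for $T_1$. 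Finally, once $\cD=M^{-1}D_{-2\ell}M$ is established, the eigenvalue statement is immediate: from Corollary \ref{cor:thmdifferentialoperatorsP} we have $R_nD_{-2\ell}=\La_n(D_{-2\ell})R_n$ with $\La_n(D_{-2\ell})=\La_n(D)-2\ell\La_n(E)$ diagonal, so $\cR_n\cD=R_nMM^{-1}D_{-2\ell}M=R_nD_{-2\ell}M=\La_n(D_{-2\ell})R_nM=\La_n(D_{-2\ell})\cR_n$, giving $\La_n(\cD)=\La_n(D)-2\ell\La_n(E)$.

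The main obstacle I expect is the bookkeeping in \eqref{eq:diagMVDOcond1}: it involves $F_0M$, the off-diagonal action of $\frac{dM}{du}T_1$, and the second-derivative term $u(1-u)\frac{d^2M}{du^2}$, so each matrix entry is a sum of several Gegenbauer polynomials of differing degrees and parameters that must collapse, column by column, to the single diagonal scaling $M T_0$. Getting the cancellations to work requires repeated and careful use of the Gegenbauer contiguous relations and the explicit form of the tridiagonal matrices $C_{-2\ell}$ and $V_{-2\ell}$; this is exactly the kind of lengthy but routine manipulation the authors flag as suited to computer algebra, and I would organize it as an entrywise check for each fixed column index $k$.
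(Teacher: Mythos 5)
Your proposal is correct and follows essentially the same route as the paper: both reduce the claim to the entrywise verification of the conjugation conditions \eqref{eq:diagMVDOcond1} and \eqref{eq:diagMVDOcond2} for $M(u)=L(1-2u)$ with diagonal $T_0,T_1$, and both obtain the eigenvalue relation by the same conjugation argument. The only difference is one of economy: instead of recombining Gegenbauer contiguous relations to check \eqref{eq:diagMVDOcond1}, the paper observes that each entry $M_{kl}(u)=\binom{k}{l}\,{}_2F_1(l-k,\,k+l+2;\,l+\tfrac32;\,u)$ satisfies the hypergeometric equation \eqref{eq:HGdiffopscalar}, so that \eqref{eq:diagMVDOcond1-A} holds precisely when $(T_1)_{ll}=l+\tfrac32-(2l+3)u$ and $(T_0)_{ll}-(F_0)_{kk}=k^2+2k-l^2-2l$ --- which simultaneously produces $T_0$, $T_1$ and shows that $\al=-2\ell$ is the only admissible value --- leaving only \eqref{eq:diagMVDOcond2} to verify, and that collapses to the single classical identity $(1-x^2)\,\frac{d}{dx}C^{(l+1)}_{k-l}(x)=(k+l+1)\,C^{(l+1)}_{k-l-1}(x)-x(k-l)\,C^{(l+1)}_{k-l}(x)$.
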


The proof shows that $M^{-1}D_\al M$ can only be a diagonal differential operator for $\al=-2\ell$. 
Note that $\cD$ is a matrix-valued differential operator as considered by Tirao, see Remark 
\ref{rmk:TiraoMVHF} and \cite{TiraPNAS}, and diagonality of $\cD$ implies that the matrix-valued 
hypergeometric ${}_2H_1$-series can be given explicitly in terms of (usual) hypergeometric series. 
In particular, we find as in the proof of Theorem \ref{thm:monicRnasMVHF} that 
\begin{equation}\label{eq:cRasMVHF}
\begin{split}
&\bigl( \cR_n(u)\bigl)_{kj}  =  \left( \tHe{T^1_1, \, \la_n(k)-T_0}{\frac12 T^1_1}{u}\, v\right)^t_j, \\ 
&v_k  =  \bigl( \cR_n(0)\bigl)_{kj}, \quad 
\la_n(k) = \La_n(\cD)_{kk},
\end{split}
\end{equation}
since the condition $\si(\frac12 T_1^1)\not\subset -\N$ is satisfied. 

\begin{proof} Consider $D_\al=D+\al E$, so that $F_2(u) = u(1-u)$ and the above considerations 
apply and $F_1(u)= C_\al-uU_\al$, and $F_0=-V_\al$. We want to find out if we can obtain matrix-
valued functions $T_1$ and $T_0$ satisfying 
\eqref{eq:diagMVDOcond1}, \eqref{eq:diagMVDOcond2} for this particular $F_1$, $F_2$ and 
$M(u)=L(1-2u)$. Since $F_0$ is diagonal, and assuming that $T_0$, $T_1$ can be taken diagonal it is 
clear that taking the $(k,l)$-th entry of \eqref{eq:diagMVDOcond1}  leads to 
\begin{equation}\label{eq:diagMVDOcond1-A}
(F_0)_{kk} M_{kl} - \frac{dM_{kl}}{du} (T_1)_{ll} - u(1-u) \frac{d^2M_{kl}}{du^2} \, = \, M_{kl} (T_0)_{ll}. 
\end{equation}
By Theorem \ref{thm:LDUdecompW} we have $M_{kl}=0$ for $l>k$ and for $l\leq k$
\[
M_{kl}(u) \, = \, \binom{k}{l}
\rFs{2}{1}{l-k, k+l+2}{l+\frac32}{u}
\]
so that \eqref{eq:diagMVDOcond1-A} has to correspond to the second order differential operator 
\begin{equation}\label{eq:HGdiffopscalar}
u(1-u) f''(u) \, + \, \bigl( c- (a+b+1)u\bigr) f'(u) - abf(u)=0, \quad f(u) \, = \, \rFs{2}{1}{a,b }{c}{u}
\end{equation}
for the hypergeometric function. This immediately gives   
\[
(T_1)_{ll}\, = \, l+\frac32 -(2l+3)u, \qquad 
(T_0)_{ll}-(F_0)_{kk} \, = \, k^2+2k-(l^2+2l). 
\]
Since $(F_0)_{kk} = (-V_\al)_{kk} =  -k^2+(2\ell +\al\frac{(2\ell+2)}{2\ell})k -\al(2\ell+2)$, this is only possible for $\al=-2\ell$, and in that case 
\begin{equation}\label{eq:diagD-T0T1explicit}
(T_1)_{ll}\, = \, l+\frac32 -(2l+3)u, \qquad 
(T_0)_{ll} \, = \, -l^2-2l+2\ell(2\ell+2). 
\end{equation}

It remains to check that for $\al=-2\ell$ the condition \eqref{eq:diagMVDOcond2} is valid with the 
explicit values \eqref{eq:diagD-T0T1explicit}. For $\al=-2\ell$ the matrix-valued function $F_1$ is 
lower triangular instead of tridiagonal, so that \eqref{eq:diagMVDOcond2} is an identity in the 
subalgebra of lower triangular matrices. With the explicit expression for $M$ we have to check that 
\[
\bigl( (\frac32 +k) - u(3+2k)\bigr) M_{kl} - kM_{k-1,l}\, - \, 2 u(1-u)\frac{dM_{kl}}{du}\, = \, 
M_{kl}\bigl( \frac32 +l- u(3+2l)\bigr) 
\]
which can be identified with the identity 
\[
(1-x^2)\, \frac{dC_{k-l}^{(l+1)}}{dx}(x) \, = \, (k+l+1) C_{k-l-1}^{(l+1)}(x) - x(k-l) C^{(l+1)}_{k-l}(x).
\]
In turn, this identity can be easily obtained from \cite[(22.7.21)]{AbraS} or from \cite[(4.5.3), (4.5.7)]{Isma}.
\end{proof}

Since $R_n$ and $M$ are polynomial, Proposition \ref{prop:OperatorDconjugatedbyM} and the explicit 
expression for the eigenvalue matrix in Corollary \ref{cor:thmdifferentialoperatorsP} imply that 
$(\cR_n)_{kj}$ is a polynomial solution to 
\begin{multline*}
u(1-u) f''(u) + \bigl( (j+\frac32)- u(2j+3)\bigr)\, f'(u) + (2\ell-j)(2\ell+j+2) f(u) \\\, = \,
\Bigl( -n(n-1)-n(2\ell+3)+k(2\ell-k) +2n(\ell-k)-(2\ell+2)(k-2\ell)\Bigr)\, f(u)
\end{multline*}
which can be rewritten as 
\begin{multline*}
u(1-u) f''(u) + \bigl( (j+\frac32)- u(2j+3)\bigr)\, f'(u) -(j-k-n)(n+k+j+2) f(u)  =  0
\end{multline*}
which is the hypergeometric differential operator for which the polynomial solutions are uniquely determined up to a constant. This immediately gives
\begin{equation}\label{eq:defconstantsckjn}
\cR_n(u)_{kj} \, = \, c_{kj}(n) \, \rFs{2}{1}{j-k-n,\, n+k+j+2}{j+\frac32}{u}.
\end{equation}
for $j-k-n\leq 0$ and $\cR_n(u)_{kj} = 0$ otherwise. The case $n=0$ corresponds to Theorem \ref{thm:LDUdecompW} and we obtain $c_{kj}(0) = \binom{k}{j}$. 
It remains to determine the constants $c_{kj}(n)$ in \eqref{eq:defconstantsckjn}. 

First, switching to the variable $x$, we find
\begin{equation}\label{eq:cPn=constantGegenbauerpols}
\bigl( \cP_n(x)\bigr)_{kj} \, = \, \bigl(P_n(x)L(x)\bigr)_{kj}\, =\, (-2)^n\, c_{kj}(n)\, \frac{(n+k-j)!}{(2j+2)_{n+k-j}} C^{(j+1)}_{n+k-j}(x)
\end{equation}
so that by \eqref{eq:cPn=constantGegenbauerpols} the orthogonality relations \eqref{eq:ortho-monicP} and 
\eqref{eq:orthorelGegenbauerpols} give
\begin{equation*}
\begin{split}
\de_{nm} (H_n)_{kl} \, = \, (-2)^{n+m}\,  &\sum_{j=0}^{2\ell\wedge(n+k)} c_{kj}(n) \overline{c_{lj}(m)} c_j(\ell) \\
& \qquad \times \frac{(n+k-j)!}{(2j+2)_{n+k-j}} \, 
\de_{n+k,m+l} 
\frac{\sqrt{\pi}\, \Ga(j+\frac32)}{(n+k+1)\, j!}.
\end{split}
\end{equation*}
Using the explicit value for $c_j(\ell)$ as in Theorem \ref{thm:LDUdecompW} and Corollary 
\ref{cor:squarednormHn} we find  orthogonality relations for the coefficients $c_{kj}(n)$:
\begin{equation}\label{eq:orthorelckjn}
\begin{split}
(H_n)_{kk} \, 2^{-2n}\, \de_{nm}\, =&   
\sum_{j=0}^{2\ell\wedge (n+k)} c_{kj}(n) \overline{c_{k+m-n,j}(m)} \\ 
&\qquad 
\times\frac{(j!)^2\, (2j+1)\, (2\ell+j+1)!\, (2\ell-j)!\, (n+k-j)!}{(n+k+j+1)!\, (n+k+1)\, (2\ell)!^2}
\end{split}
\end{equation}

Note that we can also obtain recurrence relations for the coefficients $c_{kj}(n)$ using the three-term recurrence relation of Theorem \ref{thm:three_term_for_Rm}. 

\begin{thm}\label{thm:cRnasGegenbauertimesRacah} The polynomials $\cR_n(u) = R_n(u) M(u)$ satisfy 
\begin{multline*}
\cR_n(u)_{kj} = c_{k,0}(n) (-1)^j \frac{(-2\ell)_j\, (-k-n)_j}{j!\, (2\ell+2)_j}
\, \rFs{4}{3}{-j, j+1, -k, -2\ell-n-1}{1, -k-n, -2\ell}{1} \\ \, \times
\rFs{2}{1}{j-k-n,\, n+k+j+2}{j+\frac32}{u}
\end{multline*}
with  $\cR_n(u)_{kj} = 0$ for $j-k-n> 0$ and
\[
c_{k,0}(n)\, = \,  (-1)^n 4^{-n} \frac{n!\, (2\ell+2)_n}{(k+1)_n\, (2\ell-k+1)_n}
\]
\end{thm}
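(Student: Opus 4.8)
The statement is already reduced, by \eqref{eq:defconstantsckjn}, to the scalars $c_{kj}(n)$: we know $\cR_n(u)_{kj}=c_{kj}(n)\,\rFs{2}{1}{j-k-n,\,n+k+j+2}{j+\frac32}{u}$ with $c_{kj}(n)=0$ for $j>k+n$, so the whole theorem is the computation of the $c_{kj}(n)$. The plan is to split this into two independent parts: (I) the dependence on $j$ at fixed $(n,k)$, which should produce the Racah factor, and (II) the single normalizing scalar $c_{k,0}(n)$. Throughout I write $N=n+k$.

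For (I) I would exploit the orthogonality relations that were derived for the $c_{kj}(n)$ from the Gegenbauer orthogonality \eqref{eq:orthorelGegenbauerpols}. That derivation in fact holds before specialising $m=n$: keeping $n\neq m$ but $n+k=m+l$, the left-hand side $\de_{nm}(H_n)_{kl}$ vanishes, so for each fixed $N$ the vectors $(c_{kj}(n))_{j=0}^{N\wedge 2\ell}$, indexed by $k=N-n\in\{0,\dots,N\wedge 2\ell\}$, are mutually orthogonal with respect to the positive weight $W_N(j)=\frac{(j!)^2(2j+1)(2\ell+j+1)!(2\ell-j)!(N-j)!}{(N+j+1)!\,(N+1)\,((2\ell)!)^2}$ on $\{0,\dots,N\wedge 2\ell\}$. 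Writing $c_{kj}(n)=c_{k,0}(n)\,p_N(j)\,\rho_{kj}$ with $p_N(j)=(-1)^j\frac{(-2\ell)_j(-N)_j}{j!\,(2\ell+2)_j}$ converts this into orthogonality of the $\rho_{kj}$ against $p_N(j)^2W_N(j)$; matching that weight with the Racah weight should identify the parameters $\al=0$, $\be=-2\ell-N-2$, $\ga=-2\ell-1$, $\de=2\ell+1$ (for which $\ga+1=-2\ell$ is the terminating one), giving $\rho_{kj}=R_k(\la(j);\al,\be,\ga,\de)=\rFs{4}{3}{-j,j+1,-k,-2\ell-n-1}{1,-k-n,-2\ell}{1}$, a polynomial of degree $k$ in $\la(j)=j(j+1)$. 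By uniqueness of orthogonal polynomials this pins down the $j$-dependence and yields the stated ${}_4F_3$ together with the prefactor.

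For (II) I would use the diagonal case $m=n$ of \eqref{eq:orthorelckjn}, which reads $(H_n)_{kk}\,2^{-2n}=c_{k,0}(n)^2\,h_k$, where $h_k=\sum_j p_N(j)^2R_k(\la(j))^2W_N(j)$ is the squared Racah norm supplied by the orthogonality relations in \ref{app:proofthmLDUdecompW}. With $(H_n)_{kk}$ from Corollary \ref{cor:squarednormHn} this determines $c_{k,0}(n)^2$, hence $c_{k,0}(n)$ up to sign; the sign is then fixed from the leading coefficient of $\cR_n(u)_{k0}=c_{k,0}(n)(n+k+1)^{-1}U_{n+k}(1-2u)$ together with the monicity of $R_n$ (equivalently, by continuity from the base value $c_{k,0}(0)=1$), producing $c_{k,0}(n)=(-1)^n4^{-n}\frac{n!\,(2\ell+2)_n}{(k+1)_n(2\ell-k+1)_n}$.

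The hard part will be the Racah bookkeeping in (I): confirming that $p_N(j)^2W_N(j)$ is exactly the Racah weight for the parameters above, that $\rho_{kj}$ really is a degree-$k$ polynomial in $\la(j)$ so that the uniqueness argument applies, and evaluating the norm $h_k$ in (II); these are delicate but purely mechanical manipulations of products of Pochhammer symbols. A robust independent route that sidesteps the degree subtlety is direct verification: form the matrix $\hat\cR_n$ from the closed form and check that $\hat\cR_0=M$, i.e. $\hat c_{kj}(0)=\binom kj$ (a Saalsch\"utz-type evaluation of the ${}_4F_3$), and that it satisfies the three-term recurrence of Theorem \ref{thm:three_term_for_Rm} conjugated by $M$, namely \(u\hat\cR_n=\hat\cR_{n+1}+X_n\hat\cR_n+Y_n\hat\cR_{n-1}\); since that recurrence with $\hat\cR_{-1}=0$ has a unique solution, $\hat\cR_n=\cR_n$. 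Verifying the recurrence entrywise reduces to contiguous relations for the Gegenbauer ${}_2F_1$ and the Racah ${}_4F_3$, which is precisely where computer algebra is convenient.
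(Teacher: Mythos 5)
Your reduction to the scalars $c_{kj}(n)$ via \eqref{eq:defconstantsckjn} matches the paper, and your part (II) for $|c_{k0}(n)|$ (matching the diagonal orthogonality relation against the Racah norm) is exactly what the paper does in \eqref{eq:orthorelckjn-Racahcase}--\eqref{eq:ck0(n)absolutevalue}. But part (I) has a genuine gap. For fixed $N=n+k$ the vectors $\bigl(c_{kj}(n)\bigr)_j$ do form a complete orthogonal system for the positive weight $W_N(j)$, but orthogonality alone determines such a system only up to an arbitrary orthogonal change of basis; to conclude that the $k$-th vector is proportional to the $k$-th Racah polynomial you must know beforehand that $c_{kj}(n)/p_N(j)$ is a polynomial of degree exactly $k$ in $\la(j)=j(j+1)$ (or some equivalent triangularity of the family in $k$). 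That degree statement is not a ``mechanical manipulation of Pochhammer symbols'' applied to a known expression --- it is a structural fact about the unknown coefficients and is essentially the content of the theorem. The paper obtains the $j$-dependence from a completely different source, which you never invoke: the conjugated \emph{first-order} operator $\cE=M^{-1}EM$. Since the rows of $\cR_n$ are analytic eigenfunctions of the diagonal operator $\cD$, they are determined by their values at $u=0$, i.e.\ by the vectors $c_k=(c_{kj}(n))_j$; the eigenvalue equation $\cE\,\cR_n=\La_n(\cE)\cR_n$ then descends to the linear map $N(\la)$ on initial values and yields the three-term recurrence \eqref{eq:recurrence_coefficients_ck} in $j$, which is precisely the Racah contiguous relation and pins down $c_{kj}(n)/c_{k0}(n)$ uniquely. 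Your sign determination is also shaky: the leading coefficient of $\cR_n(u)_{k0}=(R_nM)_{k0}$ is \emph{not} controlled by monicity of $R_n$ alone, because the entries $M_{i0}$ with $i>k$ have higher degree and couple to sub-leading coefficients of $R_n$; and ``continuity from $c_{k0}(0)=1$'' has no meaning in the discrete parameter $n$. The paper instead fixes the sign by feeding \eqref{eq:defconstantsckjn} into the three-term recurrence of Theorem \ref{thm:three_term_for_Rm} at the $(k,0)$ entry and comparing leading coefficients, which produces the sign recursion $\sgn(c_{k0}(n+1))=-\sgn(c_{k0}(n))$.

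Your fallback route --- verify $\hat\cR_0=M$ by a Saalsch\"utz evaluation and check that the candidate satisfies $u\hat\cR_n=\hat\cR_{n+1}+X_n\hat\cR_n+Y_n\hat\cR_{n-1}$, then appeal to uniqueness of solutions of the recurrence --- is logically sound and would prove the theorem. But be aware that this is not a side check that rescues part (I); it is an entirely different (and essentially brute-force) proof, in which the entrywise verification of the recurrence, i.e.\ a web of contiguous relations mixing shifts of the Racah ${}_4F_3$ in $(n,k)$ with the Gegenbauer three-term relation, carries the whole burden. If you intend to go that way, the orthogonality discussion in (I) becomes superfluous and you should present the recurrence verification in full.
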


We view Theorem \ref{thm:cRnasGegenbauertimesRacah} is an extension of Theorem 
\ref{thm:LDUdecompW}, but Theorem \ref{thm:LDUdecompW} is instrumental in the proof of Theorem 
\ref{thm:cRnasGegenbauertimesRacah}. Since the inverse of $M(u)$, or of $L(x)$, does not seem to 
have a nice explicit expression we do not obtain an interesting expression for the matrix elements 
of the matrix-valued monic orthogonal polynomials $R_n(u)$ or of $P_n(x)$. Note also that the case 
$\ell=0$ gives back the hypergeometric representation of the Chebyshev polynomials of the second 
kind $U_n$, see \eqref{eq:scalarcaseHGform}. 

Comparing with \eqref{eq:defRacahpols} we see that the ${}_4F_3$-series in Theorem 
\ref{thm:cRnasGegenbauertimesRacah} can be viewed as a Racah polynomial $R_k(\la(j);-2\ell-1,-k-
n-1,0,0)$, respectively $R_{k+n-m}(\la(j);-2\ell-1,-k-n-1,0,0)$, see \eqref{eq:defRacahpols}, where 
the $N$ of the Racah polynomials equals $2\ell$ in case $2\ell\leq k+n$ and $N$ equals $k+n$ in 
case $2\ell\geq k+n$. Using the first part of Theorem \ref{thm:cRnasGegenbauertimesRacah} we see 
that the orthogonality relations 
\eqref{eq:orthorelckjn} lead to 
\begin{equation}\label{eq:orthorelckjn-Racahcase} 
\begin{split}
&(H_n)_{kk} \, 2^{-2n}\, \de_{nm}\, =  \frac{\pi}{2}\frac{2\ell+1}{(n+k+1)^2} |c_{k0}(n)|^2 
\sum_{j=0}^{2\ell\wedge (n+k)} \frac{(2j+1)\, (-2\ell)_j \, (-n-k)_j}{(2\ell+2)_j\, (n+k+2)_j}
\\ &  \times R_k(\la(j);-2\ell-1,-k-n-1,0,0)\, R_{k+n-m}(\la(j);-2\ell-1,-k-n-1,0,0),
\end{split}
\end{equation}
which corresponds to the orthogonality relations for the corresponding Racah polynomials, see \cite[p.~344]{AndrAR}, \cite[\S 1.2]{KoekS}. From this we find that the sum  
in \eqref{eq:orthorelckjn-Racahcase} equals 
\[
\de_{nm}\, \frac{(2\ell+1)(n+k+1)}{(2\ell+1+n-k)}.
\]
Hence, 
\begin{equation}\label{eq:ck0(n)absolutevalue}
\begin{split}
|c_{k0}(n)|^2 \, &= \, (H_n)_{kk} \, 2^{-2n}\, \frac{2}{\pi}\frac{(n+k+1)(2\ell+1+n-k)}{(2\ell+1)^2} \, 
\\&=\, 4^{-2n} \frac{(n!)^2\, (2\ell+2)_n^2}{(k+1)_n^2\, (2\ell-k+1)_n^2}
\end{split}
\end{equation}
using Corollary \ref{cor:squarednormHn}.

We end this section with the proof of Theorem \ref{thm:cRnasGegenbauertimesRacah}. The idea of the 
proof is to obtain a three-term recurrence for the coefficients $c_{kj}(n)$ with explicit initial 
conditions, and to compare the resulting three-term recurrence with well-known recurrences for 
Racah polynomials, see \cite{AndrAR}, \cite{Isma}, \cite{KoekS}. The three-term recurrence relation 
is obtained using the first-order differential operator $E$ and the fact that the $\cR_n$, being 
analytic eigenfunctions to $\cD$, are completely determined by the value at $0$, see Remark 
\ref{rmk:TiraoMVHF}. 

\begin{proof}[Proof of Theorem \ref{thm:cRnasGegenbauertimesRacah}] Since the matrix-valued 
differential operators $D$ and $E$ commute and have the matrix-valued orthogonal polynomials $R_n$ 
as eigenfunctions by 
Corollary \ref{cor:thmdifferentialoperatorsP}, we see that $\cE = M^{-1} EM$ satisfies 
\begin{equation}\label{eq:propertiescE}
\cE\, \cR_n \, = \, \La_n(\cE)\, \cR_n, \quad \La_n(\cE) \, = \, \La_n(E), \qquad \cE\, \cD\, =\, \cD\, \cE
\end{equation}
Moreover, in the same spirit as the proof of Proposition \ref{prop:OperatorDconjugatedbyM} we 
obtain
\begin{equation}\label{eq:explicitcE=MinvEM}
\begin{split}
\cE\, &= \, \left(\frac{d}{du}\right) S_1(u) + S_0(u), \\ 
S_1(u)\, &=\, u(1-u)\sum_{i=0}^{2\ell}  \frac{i^2(2\ell+i+1)}{\ell(2i-1)(2i+1)} \, E_{i,i-1}
\, -\, \sum_{i=0}^{2\ell}   \frac{(2\ell-i)}{4\ell}  \, E_{i,i+1},\\
S_0(u)\, &= \, (1-2u)\sum_{i=0}^{2\ell}  \frac{i^2(2\ell+i+1)}{2\ell(2i-1)}   \,E_{i,i-1}
\,+\, \sum_{i=0}^{2\ell}   \frac{i(i+1)-4\ell(\ell+1)}{2\ell} \, E_{i,i}
\end{split}
\end{equation}
by a straightforward calculation.   

Define the vector space of (row-)vector valued functions 
\[
V(\la)=\{ F\text{ analytic at }u=0\, \mid \, F\cD\, =\, \la F \}, 
\]
and $\nu \colon V(\la) \to \C^{2\ell+1}$, $F\mapsto F(0)$, is an isomorphism, see Remark \ref{rmk:TiraoMVHF} and \cite{TiraPNAS}. 
Because of \eqref{eq:propertiescE} we have 
the following commutative diagram
\begin{equation*}
\begin{CD}
V(\la) @ > \cE >>V(\la) \\ @ V \nu VV @ VV \nu V \\
\C^{2\ell+1} @> N(\la) >> \C^{2\ell+1}  
\end{CD}
\end{equation*}
with $N(\la)$ a linear map. In order to determine $N(\la)$ we note that $F\in V(\la)$ can be written as, cf \eqref{eq:cRasMVHF}, 
\[
F_j(u) = 
\left( \tHe{T^1_1, \, \la-T_0}{\frac12 T^1_1}{u}\, F(0)^t\right)^t_j, 
\]
so that $\frac{dF_j}{du}(0) = F(0)(\la-T_0)(\frac12 T_1^1)^{-1}$ by construction of the ${}_2H_1$-series, see Remark \ref{rmk:TiraoMVHF}. 
Now \eqref{eq:propertiescE} gives 
\begin{equation*}
N(\la) \, = \, (\la-T_0)(\frac12 T_1^1)^{-1}S_1(0) + S_0(0)
\end{equation*}
acting from the right on row-vectors from $\C^{2\ell+1}$.

By Proposition \ref{prop:OperatorDconjugatedbyM} we have that the $k$-th row $\bigr((\cR_n)_{kj}(\cdot)\bigl)_{j=0}^{2\ell}$ of $R_n$ is contained in 
$V(\la_n(k))$, see \eqref{eq:cRasMVHF}. On the other hand, the $k$-th row of $\cR_n$ is an eigenfunction of $\cE$ for the eigenvalue $\mu_n(k)=\La_n(\cE)_{kk}$. 
Since $\nu\Bigl(\bigr((\cR_n)_{kj}\bigl)_{j=0}^{2\ell}\Bigr) = (c_{kj}(n))_{j=0}^{2\ell}$ 
we see that the row-vector $c_k=(c_{kj}(n))_{j=0}^{2\ell}$ satisfies
$c_k N(\la_n(k)) = \mu_n(k)\, c_k$, which gives the recurrence relation 
\begin{multline}
\label{eq:recurrence_coefficients_ck}
 -\frac{(i+k+n+1)(i-k-n-1)(2\ell-i+1)}{(2i+1)}c_{k,i-1}(n) + (i(i+1)-4\ell(\ell+1))c_{k,i}(n)\\
+\frac{(i+1)^2(2\ell+i+2)}{(2i+1)}c_{k,i+1}(n)\, =\, (-2n(\ell-k)+(2\ell+2)(k-2\ell))c_{k,i}(n),
\end{multline}
with the convention $c_{k,-1}(n)=0$. Note that $c_{jk}(0)= \binom{k}{j}$ indeed satisfies 
\eqref{eq:recurrence_coefficients_ck}. 
Comparing \eqref{eq:recurrence_coefficients_ck} with the three-term recurrence relation for the 
Racah polynomials 
or the corresponding contiguous relation for balanced ${}_4F_3$-series, see e.g. \cite[p.~344]
{AndrAR}, \cite[\S 1.2]{KoekS}, gives
\begin{equation*} 
c_{kj}(n) \, = \, c_{k,0}(n) (-1)^j \frac{(-2\ell)_j\, (-k-n)_j}{j!\, (2\ell+2)_j}
\, \rFs{4}{3}{-j, j+1, -k, -2\ell-n-1}{1, -k-n, -2\ell}{1} 
\end{equation*}
and $c_{kj}(n) = 0$ for $j>k+n$. 

It remains to determine the constants $c_{k0}(n)$, and we have already determined their absolute 
values in \eqref{eq:ck0(n)absolutevalue} by matching it to the orthogonality relations for Racah 
polynomials. From the three-term recurrence relation Theorem \ref{thm:three_term_for_Rm} we see 
that the constants $c_{kj}(n)$ are all real, so it remains to determine the sign of $c_{k0}(n)$. 
Theorem \ref{thm:three_term_for_Rm} gives a three-term recurrence for $\cR_n(u)$, and taking the 
$(k,0)$-th matrix entry gives a polynomial identity in $u$ using \eqref{eq:defconstantsckjn}. Next 
taking the leading coefficient gives the recursion
\[
c_{k0}(n+1) = -\frac{(n+k+2)}{4(n+k+1)}\, c_{k0}(n) + \frac{(2\ell-k)^2}{4(2\ell+n-k)(2\ell+n-k+1)}\, c_{k+1,0}(n)
\]
and plugging in $c_{k0}(n) =\sgn(c_{k0}(n)) |c_{k0}(n)|$ and using the explicit value for $|c_{k0}(n)|$ gives 
\begin{multline*}
\sgn(c_{k0}(n+1))\, (n+1)(2\ell+n+2) =  \\
-\sgn(c_{k0}(n))(n+k+2)(2l-k+n+1) + \sgn(c_{k+1,0}(n))(2\ell-k)(k+1). 
\end{multline*}
This gives $\sgn(c_{k0}(n)) = \sgn(c_{k+1,0}(n))$ for the right hand side to factorise as in the left hand side, and then
$\sgn(c_{k0}(n+1)) = -\sgn(c_{k0}(n))$. Since $c_{k0}(0)=1$, we find $\sgn(c_{k0}(n))=(-1)^n$. 
\end{proof}

\begin{rem}\label{rmk:thmcRnasGegenbauertimesRacah} Theorem \ref{thm:cRnasGegenbauertimesRacah} 
can now be plugged into the three-term recurrence relation for $\cR_n$ of Theorem 
\ref{thm:three_term_for_Rm}, and this then gives a intricate three-term recurrence relation for 
Gegenbauer polynomials involving coefficients which consist of sums of two Racah polynomials. We 
leave this to the interested reader.
\end{rem}

\begin{rem}\label{rmk:partiallyotherproofthmcRnasGegenbauertimesRacah}
We sketch another approach to the proof of the value of $c_{k0}(n)$ by calculating the value
$c_{k,2\ell}(n)$ in case $k+n\geq 2\ell$ or $c_{k,n+k}(n)$ in case $k+n<2\ell$. For instance, in case 
$k+n\geq 2\ell$ we have
\[
(\cR_n(u))_{k,2\ell}\, = \, (R_n(u)M(u))_{k,2\ell} \, = \, (R_n(u))_{k,2\ell} \, = \, (R_n(u))_{2\ell-k,0}
\]
using that $M$ is a unipotent lower-triangular matrix-valued polynomial and the symmetry 
$JR_n(u)J=R_n(u)$, see \cite[\S 5]{KRvP}. Now the leading coefficient of the right hand side can be 
calculated using Theorem \ref{thm:monicRnasMVHF}, and combining with 
\eqref{eq:defconstantsckjn}, the value $c_{k,2\ell}(n)$ follows. Then the recurrence  
\eqref{eq:recurrence_coefficients_ck} can be used to find $c_{k0}(n)$. 
\end{rem}

\section{Group theoretic interpretation}\label{sec:grouptheoreticderivation}

The purpose of this section is to give a group theoretic derivation of Theorem \ref{thm:differentialoperatorsP} complementing the analytic derivation of \cite[\S 7]{KRvP}. For this we need to recall some of the results of \cite{KRvP}.

\subsection{Group theoretic setting of the matrix-valued orthogonal polynomials}\label{ssec:grouptheoreticderivationDOs}

In this subsection we recall the construction of the matrix-valued orthogonal polynomials and the 
corresponding weight starting from the pair $(\SU(2)\times\SU(2),\SU(2))$ and an 
$\SU(2)$-representation $T^{\ell}$. Then we discuss how the differential operators come into play and what 
their relation is with the matrix-valued orthogonal polynomials. The goal of this section is to 
provide a map of the relevant differential operators in the group setting to the relevant 
differential operators for the matrix-valued orthogonal polynomials in Theorem 
\ref{def:to dif op pol}.

Let $U=\SU(2)\times\SU(2)$ and $K=\SU(2)$ diagonally embedded in $U$. Note that $K$ is the set of 
fixed points of the involution $\theta:U\to U:(x,y)\mapsto(y,x)$. The irreducible representations 
of $U$ and $K$ are denoted by $T^{\ell_{1},\ell_{2}}$ and $T^{\ell}$ as is explained in \cite[\S2]
{KRvP}. The representation space of $T^{\ell}$ is denoted by $H^{\ell}$ which is a 
$2\ell+1$-dimensional vector space. If $T^{\ell}$ occurs in $T^{\ell_{1},\ell_{2}}$ upon restriction to $K$ 
we defined the spherical function $\Phi^{\ell}_{\ell_{1},\ell_{2}}$ in \cite[Def.~2.2]{KRvP} as the 
$T^{\ell}$-isotypical part of the matrix $T^{\ell_{1},\ell_{2}}$. Let $A\subset U$ be the subgroup
\[A=\left\{a_{t}=\left(\left(\begin{array}{cc}e^{it/2}&0\\0&e^{-it/2}\end{array}\right),
\left(\begin{array}{cc}e^{-it/2}&0\\0&e^{it/2}\end{array}\right)\right), 0\le t<4\pi\right\}\]
and let $M=Z_{K}(A)$. Recall the decomposition $U=KAK$, \cite[Thm.~7.38]{KnappLGBI}. The restricted 
spherical functions $\Phi^{\ell}_{\ell_{1},\ell_{2}}|_{A}$ take values in $\mathrm{End}_{M}
(H^{\ell})$, see \cite[Prop.~2.3]{KRvP}. Since $\mathrm{End}_{M}(H^{\ell})\cong\C^{2\ell+1}$ this 
allows allows us to view the restricted spherical functions as being $\C^{2\ell+1}$-valued. The 
parametrization of the $U$-representations that contain $T^{\ell}$ indicates how to gather the 
restricted spherical functions. Following \cite[Fig.~3]{KRvP} we write $(\ell_{1},
\ell_{2})=\zeta(d,h)$ with $\zeta(d,h)=(\frac{1}{2}(d+\ell+h),\frac{1}{2}(d+\ell-h))$. Here 
$d\in\mathbb{N}$ and $h\in\{-\ell,-\ell+1,\ldots,\ell\}$. We recall the definition of the full 
spherical functions of type $\ell$, \cite[Def.~4.2]{KRvP}.

\begin{defin}\label{def: full sf} The full spherical function of type $\ell$ and degree $d$ is 
the matrix-valued function $\Phi^{\ell}_{d}:A\to\mathrm{End}(\C^{2\ell+1})$ whose $j$-th row is the 
restricted spherical function $\Phi^{\ell}_{\ell_{1},\ell_{2}}$ with $(\ell_{1},
\ell_{2})=\zeta(d,j)$. 
\end{defin}

The full spherical function of degree zero has the remarkable property of being invertible on the 
subset $A_{reg}:=\{a_{t}|t\in(0,\pi)\cup(\pi,2\pi)\cup(2\pi,3\pi)\cup(3\pi,4\pi)\}$, which was 
first proved by Koornwinder \cite[Prop.~3.2]{Koornwinder85}. The invertibility follows also from 
Corollary \ref{cor:det-thmLDUdecompW}. Let $\phi=\Phi^{0}_{1/2,1/2}$ be the minimal non-trivial 
zonal spherical function \cite[\S3]{KRvP}. Together with the recurrence relations for the full 
spherical functions with $\phi$ \cite[Prop.~3.1]{KRvP} this gives rise to the full spherical 
polynomials \cite[Def.~4.4]{KRvP}.

\begin{defin}\label{def: sf pol} The full spherical polynomial $Q_{d}^{\ell}:A\to\mathrm{End}
(\C^{2\ell+1})$ is defined by $Q_{d}^{\ell}(a)=(\Phi_{0}^{\ell}(a))^{-1}\Phi_{d}^{\ell}(a)$. 
\end{defin}

The name full spherical polynomial comes from the fact that the $Q_{d}^{\ell}$ are polynomials in 
$\phi$. The full spherical polynomials $Q_{d}^{\ell}$ are orthogonal with respect to 
\[\langle
Q,P\rangle_{V^{\ell}}=\int_{A}Q(a)V^{\ell}(a)P(a)da,\quad
V^{\ell}(a_{t})=(\Phi_{0}^{\ell}(a_{t}))^{*}\Phi_{0}^{\ell}(a_{t})\sin^{2}
t,
\]
see \cite[Cor. 5.7]{KRvP}. 

In \cite[\S5]{KRvP} we studied the weight functions $V^{\ell}$ extensively. It turns out that the 
matrix entries are polynomials in the function $\phi$, apart from the common factor $\sin t$. Upon 
changing the variable $x=\phi(a)$ we obtain the following system of matrix-valued orthogonal 
polynomials. 

\begin{defin}\label{def: sf pol in x}
Let $R_{d}^{\ell}:[0,1]\to\mathrm{End}(\C^{2\ell+1})$ be the polynomial defined by $R_{d}^{\ell}
(\phi(a))=Q_{d}^{\ell}(a)$. The degree of $R_{d}^{\ell}$ is $d$. The polynomials are orthogonal 
with respect to
\[\langle R,P\rangle_{W^{\ell}}=\int_{-1}^{1}R(x)W^{\ell}(x)P(x)\, dx,\]
where $W^{\ell}$ is defined by $W^{\ell}(\phi(a))d\phi=V^{\ell}(a)da$.
\end{defin}

The weight $W^{\ell}(x)$ from Definition \ref{def: sf pol in x} is the same as the weight defined 
in \eqref{eq:defmatrix_W} where we have to bear in mind that the basis is parametrised differently. 
The matrix-valued polynomials $R_{d}^{\ell}$ correspond to the family $\{P_{d}\}_{d\ge0}$ from 
Theorem \ref{thm:differentialoperatorsP} by means of making the $R^{\ell}_{d}$ monic. Given a 
system of matrix-valued orthogonal polynomials as in Definition \ref{def: sf pol in x} it is of 
great interest to see whether there are interesting differential operators. More precisely we define 
the algebra $\mathcal{D}(W^{\ell})$ as the algebra of differential operators that are self-adjoint 
with respect to the weight $W^{\ell}$ and that have the $R_{d}^{\ell}$ as eigenfunctions. We define 
a map that associates to a certain left invariant differential operator on the group $U$ an element 
in $\mathcal{D}(W)$.

Before we go into the construction we observe that the spherical functions may also be defined on 
the complexification $A^{\C}$, using Weyl's unitary trick. Indeed, all the representations that we 
consider are finite dimensional and unitary, so they give holomorphic representations of the 
complexifications $U^{\C}$ and $K^{\C}$.

A great part of the constructions that we are about to consider follows Casselman and Mili\v{c}i\'c 
\cite{CM1982}, where the differential operators act from the left. In this section we follow this 
convention, except that we transpose the results at the end in order to obtain the proof of Theorem 
\ref{thm:differentialoperatorsP} where the differential operators act from the right. 

Let $U(\lau^\C)$ be the universal enveloping algebra for the complexification $\lau^{\C}$ of the 
Lie algebra $\lau$ of the group $U=\SU(2)\times \SU(2)$. Let $\theta\colon U(\lau^\C) \to 
U(\lau^\C)$ be the flip on simple tensors extending the Cartan involution $\theta \colon 
\lasu(2)\times \lasu(2) \to \lasu(2)\times \lasu(2)$, $(X,Y)\mapsto (Y,X)$. Recall $\lak\cong 
\lasu(2)$ is the fixed-point set of $\theta$. Let $U(\lau^{\C})^{\lak^{\C}}$ denote the subalgebra 
of elements that commute with $\lak^{\C}$. Let $\laZ$ denote the center of $U(\lak^{\C})$.

\begin{lem}\label{lem: grp alg} $U(\lau^{\C})^{\lak^{\C}}\cong\laZ\otimes\laZ\otimes\laZ$.
\end{lem}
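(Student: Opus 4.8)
The plan is to exhibit three explicit commuting subalgebras of $U(\lau^\C)$ whose product fills out the centralizer $U(\lau^\C)^{\lak^\C}$, and then to verify the asserted isomorphism by passing to the associated graded and invoking classical invariant theory. First I would identify the three copies of $\laZ$. Write $\lau^\C$ as the sum of its two $\lasu(2)^\C$-factors and let $\laZ_1,\laZ_2$ be the centers of the enveloping algebras of the first and second factor; each $\laZ_i$ is the polynomial algebra $\C[\Omega^{(i)}]$ on the Casimir $\Omega^{(i)}$ of that factor and is isomorphic to $\laZ$. Since these Casimirs are central in all of $U(\lau^\C)$, the elements of $\laZ_1$ and $\laZ_2$ lie in $U(\lau^\C)^{\lak^\C}$. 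The third copy is $\laZ=Z(U(\lak^\C))$ itself, embedded via the diagonal map $\lak^\C\hookrightarrow\lau^\C$; its generator is the Casimir $\Omega_K$ of $\lak^\C$, which commutes with $\lak^\C$ and hence also lies in the centralizer. Because $\laZ_1,\laZ_2$ are central and $\Omega_K$ commutes with everything they contain, the multiplication map $\mu\colon \laZ_1\otimes\laZ_2\otimes\laZ\to U(\lau^\C)^{\lak^\C}$, $z_1\otimes z_2\otimes z_3\mapsto z_1z_2z_3$, is a well-defined homomorphism of commutative algebras, and it remains to show that it is an isomorphism.

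For injectivity, equivalently algebraic independence of $\Omega^{(1)},\Omega^{(2)},\Omega_K$, I would pass to symbols in $\operatorname{gr}U(\lau^\C)=S(\lau^\C)$. Denoting by $q$ the $\Ad$-invariant quadratic (trace) form on $\lasu(2)^\C$ and by $b$ its polarization, the symbol of $\Omega^{(i)}$ is $q(\xi_i)$ and the symbol of $\Omega_K$ is $q(\xi_1)+q(\xi_2)+2b(\xi_1,\xi_2)$, where $\xi_1,\xi_2$ denote the two factor-components. These three quadratics generate the same subalgebra of $S(\lau^\C)$ as $q(\xi_1),q(\xi_2),b(\xi_1,\xi_2)$, which are visibly algebraically independent; hence the symbols, and therefore the Casimirs themselves, are algebraically independent and $\mu$ is injective.

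Surjectivity is the crux. Working with the filtration on $U(\lau^\C)$ and using that $\lak^\C$, equivalently the connected reductive group $K^\C=\mathrm{SL}(2,\C)$, is reductive so that taking $\lak^\C$-invariants is exact on each finite-dimensional filtered piece, one gets $\operatorname{gr}\bigl(U(\lau^\C)^{\lak^\C}\bigr)=S(\lau^\C)^{\lak^\C}=S(\lau^\C)^{K^\C}$, the latter being invariants for the $\Ad$-action. Under $\mathrm{SL}(2,\C)$ each factor of $\lau^\C$ is the three-dimensional adjoint representation, which realizes the standard representation of $\mathrm{SO}(3,\C)$, so $S(\lau^\C)^{K^\C}$ is the ring of $\mathrm{SO}(3,\C)$-invariants of two vectors in $\C^3$. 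By the first fundamental theorem for the orthogonal group, and since with only two vectors no $3\times 3$ determinant invariant can be formed, this ring is free polynomial on the three inner products $q(\xi_1),q(\xi_2),b(\xi_1,\xi_2)$. Thus $\operatorname{gr}\bigl(U(\lau^\C)^{\lak^\C}\bigr)$ is exactly the polynomial algebra on the symbols of $\Omega^{(1)},\Omega^{(2)},\Omega_K$, and a standard induction on filtration degree, subtracting from any invariant the polynomial in the Casimirs matching its symbol, shows that every element of $U(\lau^\C)^{\lak^\C}$ lies in the image of $\mu$.

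The main obstacle is this surjectivity step, which rests on two ingredients that must be combined with care: the identification $\operatorname{gr}\bigl(U(\lau^\C)^{\lak^\C}\bigr)=S(\lau^\C)^{K^\C}$, valid because $K^\C$ is reductive, and the explicit invariant-theoretic computation that $S(\lau^\C)^{K^\C}$ is free polynomial on the three quadratic invariants. Everything else, namely the construction of $\mu$ and the verification of algebraic independence, is routine.
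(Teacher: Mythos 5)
Your proof is correct, but it takes a genuinely different route from the paper. The paper disposes of this lemma in two lines by invoking a general theorem of Knop (Satz 2.1 and Satz 2.3 of \emph{Der Zentralisator einer Liealgebra in einer einh\"ullenden Algebra}), which gives $U(\lau^{\C})^{\lak^{\C}}\cong\laZ\otimes_{Z(\mathfrak{j})}(\laZ\otimes\laZ)$ with $\mathfrak{j}$ the largest ideal of $\lau^{\C}$ contained in $\lak^{\C}$, and then observes that $\mathfrak{j}=0$ here. You instead reprove the special case from scratch: the three tensor factors are realized as $\C[\Omega^{(1)}]$, $\C[\Omega^{(2)}]$ and $\C[\Omega_K]$, injectivity of the multiplication map is checked on symbols, and surjectivity is reduced, via $\operatorname{gr}\bigl(U(\lau^\C)^{\lak^\C}\bigr)=S(\lau^\C)^{K^\C}$ (valid by reductivity of $K^\C$) and the first and second fundamental theorems for $\mathrm{O}(3,\C)$ applied to two copies of the adjoint representation, to the freeness of the invariant ring on the three quadratics $q(\xi_1)$, $q(\xi_2)$, $b(\xi_1,\xi_2)$; your remark that no $3\times 3$ determinant invariant arises from only two vectors correctly handles the passage from $\mathrm{O}(3,\C)$ to $\mathrm{SO}(3,\C)$. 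What the paper's citation buys is brevity and generality; what your argument buys is self-containedness and an explicit set of free generators $\Omega^{(1)},\Omega^{(2)},\Omega_K$, which is in fact the form in which the lemma is used later (the paper needs that $\Omega_{1}=\Omega_{\lak}\otimes 1$ and $\Omega_{2}=1\otimes\Omega_{\lak}$ are among the generators and that the centralizer is commutative, both of which drop out of your description immediately).
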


\begin{proof}
From \cite[Satz~2.1 and Satz~2.3]{Knop} it follows that 
$U(\lau^{\C})^{\lak^{\C}}\cong\laZ\otimes_{Z(\mathfrak{j})}(\laZ\otimes\laZ)$ where 
$\mathfrak{j}\subset\lau^{\C}$ is the largest ideal of $\lau^{\C}$ contained in $\lak^{\C}$. Since 
$\mathfrak{j}=0$ the result follows.
\end{proof}

\begin{prop}\label{prop: Csf eigenv}
The elements of the algebra $U(\lau^{\C})^{\lak^{\C}}$ have the spherical functions 
$\Phi^{\ell}_{\ell_{1},\ell_{2}}$ as eigenfunctions. This remains true when we extend 
$\Phi^{\ell}_{\ell_{1},\ell_{2}}$ to 
$U^\C$.
\end{prop}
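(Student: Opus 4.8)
The plan is to realize the spherical function as a matrix coefficient of $T^{\ell_{1},\ell_{2}}$ and then observe that an element of $U(\lau^{\C})^{\lak^{\C}}$ acts inside such a coefficient by a $K$-intertwiner, which by multiplicity freeness is forced to be scalar on the relevant isotypic block.

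First I would fix an irreducible $U$-representation $(T^{\ell_{1},\ell_{2}},V)$ containing the $K$-type $T^{\ell}$, and let $P_{\ell}\colon V\to V$ be the $K$-equivariant projection onto the $T^{\ell}$-isotypic subspace $V[\ell]$, which is identified with $H^{\ell}$ as in \cite[Def.~2.2]{KRvP}. Under this identification the spherical function is the $\mathrm{End}(H^{\ell})$-valued matrix coefficient $\Phi^{\ell}_{\ell_{1},\ell_{2}}(u)=P_{\ell}\,T^{\ell_{1},\ell_{2}}(u)\,P_{\ell}$. Recall that $U(\lau^{\C})$ acts on matrix coefficients by invariant differential operators differentiating the group argument, so that applying $D\in U(\lau^{\C})$ to $\Phi^{\ell}_{\ell_{1},\ell_{2}}$ produces $P_{\ell}\,T^{\ell_{1},\ell_{2}}(u)\,dT^{\ell_{1},\ell_{2}}(D)\,P_{\ell}$, where $dT^{\ell_{1},\ell_{2}}$ denotes the induced representation of the enveloping algebra.

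Next I would exploit the hypothesis $D\in U(\lau^{\C})^{\lak^{\C}}$. Because $D$ commutes with $\lak^{\C}$ in the enveloping algebra and $K$ is connected, the operator $dT^{\ell_{1},\ell_{2}}(D)$ commutes with the entire $K$-action on $V$; in particular it commutes with $P_{\ell}$ and preserves $V[\ell]$. The branching of $T^{\ell_{1},\ell_{2}}|_{K}$ is governed by the Clebsch--Gordan rule for $\SU(2)$, whence $T^{\ell}$ occurs in $V$ with multiplicity at most one, so $V[\ell]\cong H^{\ell}$ is $K$-irreducible and Schur's lemma forces $dT^{\ell_{1},\ell_{2}}(D)|_{V[\ell]}$ to be a scalar $\lambda_{D}=\lambda_{D}(\ell;\ell_{1},\ell_{2})$. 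Consequently $dT^{\ell_{1},\ell_{2}}(D)\,P_{\ell}=\lambda_{D}\,P_{\ell}$, and substituting this into the displayed formula yields $P_{\ell}\,T^{\ell_{1},\ell_{2}}(u)\,dT^{\ell_{1},\ell_{2}}(D)\,P_{\ell}=\lambda_{D}\,\Phi^{\ell}_{\ell_{1},\ell_{2}}(u)$, i.e.\ $\Phi^{\ell}_{\ell_{1},\ell_{2}}$ is an eigenfunction with scalar eigenvalue $\lambda_{D}$.

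Finally, for the extension to $U^{\C}$ I would invoke Weyl's unitary trick as recalled just before Lemma \ref{lem: grp alg}: $T^{\ell_{1},\ell_{2}}$ extends to a holomorphic representation of $U^{\C}$, so both sides of the eigenvalue identity are holomorphic in the group variable and the identity persists by analytic continuation from $U$. I expect the main obstacle to be bookkeeping rather than conceptual: one must fix the precise convention for the action (the Casselman--Mili\v{c}i\'c left action used in this section versus the right-acting operators of Theorem \ref{thm:differentialoperatorsP}) together with the exact meaning of the isotypical part in \cite[Def.~2.2]{KRvP}, so that $dT^{\ell_{1},\ell_{2}}(D)$ is inserted on the correct side of $T^{\ell_{1},\ell_{2}}(u)$. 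Once the multiplicity-one statement is secured, the scalar eigenvalue computation is immediate.
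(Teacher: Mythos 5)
Your proof is correct and is essentially the standard argument that the paper delegates to the citation of \cite[Thm.~6.1.2.3]{WarnerII}: realize $\Phi^{\ell}_{\ell_1,\ell_2}$ as $P_{\ell}T^{\ell_1,\ell_2}(u)P_{\ell}$, use that $dT^{\ell_1,\ell_2}(D)$ is a $K$-intertwiner preserving the (multiplicity-one, hence irreducible) isotypic component so that Schur's lemma yields a scalar, and extend to $U^{\C}$ by Weyl's unitary trick exactly as the paper does. Your appeal to the Clebsch--Gordan multiplicity-freeness of $T^{\ell_1}\otimes T^{\ell_2}|_{\SU(2)}$ is valid for this pair, so no gap remains.
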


\begin{proof}
See \cite[Thm.~6.1.2.3]{WarnerII}. The second statement follows from Weyl's unitary trick.
\end{proof}

The spherical functions $\Phi^{\ell}_{\ell_{1},\ell_{2}}$ have $T^{\ell}$-transformation behaviour:
\begin{eqnarray}\label{eq: trafo}
\Phi^{\ell}_{\ell_{1},\ell_{2}}(k_{1}uk_{2})=T^{\ell}(k_{1})\Phi^{\ell}_{\ell_{1},\ell_{2}}(u)T^{\ell}(k_{2})
\end{eqnarray}
for all $k_{1},k_{2}\in K$ and $u\in U$, see \cite[Def.~2.2]{KRvP}. Let $C(A)$ denote the set of 
continuous ($\C$-valued) functions on $A$. Casselman and Mili\v{c}i\'c \cite{CM1982} define the map
\[\Pi_{\ell}\colon U(\lau^{\C})^{\lak^{\C}}\to C(A)\otimes U(\laa^{\C})\otimes\mathrm{End}
(\mathrm{End}_{M}(H^{\ell}))\]
and prove the following properties \cite[Thm.~3.1,Thm.~3.3]{CM1982}. 

\begin{thm}\label{thm: radial part}
Let $F\colon U\to\mathrm{End}(H^{\ell})$ be a smooth function that satisfies \eqref{eq: trafo}. Then 
$(DF)|_{A}=\Pi_{\ell}(D)(F|_{A})$ for all $D\in U(\lau^{\C})^{\lak^{\C}}$. Moreover, $\Pi_{\ell}$ is an algebra homomorphism.
\end{thm}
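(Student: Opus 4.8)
The plan is to follow the construction of the radial part map due to Casselman and Mili\v{c}i\'c \cite{CM1982}; I indicate the main steps. The statement is local in nature, so I would first reduce everything to a neighbourhood of a regular point $a\in A_{reg}$. Using the decomposition $U=KAK$ together with the fact that the multiplication map $K\times A_{reg}\times K\to U$ is a submersion onto an open dense subset (its differential is surjective precisely at regular points, where the restricted roots do not vanish), one introduces local coordinates adapted to the three factors. The transformation behaviour \eqref{eq: trafo} then shows that $F$ is determined near $a$ by its restriction $F|_A$, so it suffices to compute $(DF)(a)$ for $a\in A_{reg}$ and to express the answer through $F|_A$ and its derivatives along $A$.

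For the infinitesimal computation I would use the decomposition $\lau^\C=\lak^\C\oplus\mathfrak{p}^\C$ into the $\pm1$-eigenspaces of $\theta$, refined to $\mathfrak{p}^\C=\laa^\C\oplus(\mathfrak{p}')^\C$, where $\mathfrak{p}'$ is spanned by the restricted root vectors. By the Poincar\'e--Birkhoff--Witt theorem every $D\in U(\lau^\C)^{\lak^\C}$ can be written, after moving $\lak^\C$-factors to one side, as a sum of monomials built from $\laa^\C$, $(\mathfrak{p}')^\C$ and $\lak^\C$. On a function satisfying \eqref{eq: trafo} the three types of generators act as follows: an element of $\laa$ differentiates genuinely along $A$; an element of $\lak^\C$, acting on the left or the right, is converted by \eqref{eq: trafo} into the infinitesimal action $dT^\ell$ of the $K$-representation, which is where the $\mathrm{End}(\mathrm{End}_M(H^\ell))$-factor in the target of $\Pi_\ell$ originates; the only genuinely new contribution comes from the root directions $\mathfrak{p}'$.

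The crux is therefore the computation for $X\in\mathfrak{p}'$. Here I would use that for $a=a_t\in A_{reg}$ the operator $\Ad(a^{-1})$ sends a root vector to a combination of a $\lak$-vector and an $\laa$-vector with trigonometric coefficients in $t$ that are invertible exactly on $A_{reg}$. Solving this linear relation expresses the $\mathfrak{p}'$-directional derivative of $F$ at $a$ in terms of a left and a right $\lak$-derivative (handled by \eqref{eq: trafo}) and the $\laa$-derivative (handled by $F|_A$), the coefficients being the promised $C(A)$-valued functions. Substituting these expressions into the PBW monomials, collecting the $\lak^\C$-factors --- which, because $D$ commutes with $\lak^\C$ and the $\lak^\C$-action factors through $M$-invariants, land in $\mathrm{End}(\mathrm{End}_M(H^\ell))$ --- and invoking $\laZ\subset U(\lak^\C)$ as in Lemma \ref{lem: grp alg}, yields an operator of exactly the asserted form $\Pi_\ell(D)\in C(A)\otimes U(\laa^\C)\otimes\mathrm{End}(\mathrm{End}_M(H^\ell))$ with $(DF)|_A=\Pi_\ell(D)(F|_A)$.

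Finally, for the homomorphism property I would argue that if $D_2\in U(\lau^\C)^{\lak^\C}$ then $D_2F$ again satisfies \eqref{eq: trafo}, since $D_2$ commutes with $\lak^\C$; hence the first part applies to $D_2F$ as well. Therefore $(D_1D_2F)|_A=\Pi_\ell(D_1)((D_2F)|_A)=\Pi_\ell(D_1)\Pi_\ell(D_2)(F|_A)$, and comparing with $(D_1D_2F)|_A=\Pi_\ell(D_1D_2)(F|_A)$ for enough functions $F$ (for instance all spherical functions of type $\ell$) forces $\Pi_\ell(D_1D_2)=\Pi_\ell(D_1)\Pi_\ell(D_2)$. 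I expect the main obstacle to be the geometric bookkeeping of the crux step: verifying the submersion property at regular points and inverting the differential of the $KAK$-parametrisation to obtain the explicit $\Ad(a)$ relation, while keeping careful track of the $M$-invariance so that all $\lak^\C$-contributions really collapse into $\mathrm{End}(\mathrm{End}_M(H^\ell))$.
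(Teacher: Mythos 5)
The paper offers no proof of this theorem at all: it is imported verbatim from Casselman and Mili\v{c}i\'c, with the construction of $\Pi_{\ell}$ and both assertions credited to \cite[Thm.~3.1, Thm.~3.3]{CM1982}. Your sketch is a correct reconstruction of the argument of that reference --- reduction to $A_{reg}$ via the $KAK$ submersion, conversion of left and right $\lak^{\C}$-derivatives into $dT^{\ell}$ using \eqref{eq: trafo}, the key identity resolving root vectors through $\lak^{\C}$ and $\Ad(a^{-1})\lak^{\C}$ with coefficients singular exactly off $A_{reg}$ (the identity quoted in Section \ref{ssec:calculationCasimiropertors}, step (2)), and the homomorphism property from the fact that $D_2F$ again satisfies \eqref{eq: trafo} because $D_2$ centralises $\lak^{\C}$ --- so in substance you follow the same route as the cited source. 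One small imprecision: $\Ad(a^{-1})$ does not mix a root vector with $\laa$; it merely rescales it by $\be(a)^{-1}$. What makes the argument work is that for $a\in A_{reg}$ the root vector $E_{\be}$ (equivalently the $\mathfrak{p}'$-direction) lies in $\lak^{\C}+\Ad(a^{-1})\lak^{\C}$, with coefficients rational in $\be(a)$; no $\laa$-component enters at that step, the $\laa$-derivatives arising separately from the $U(\laa^{\C})$-factors of the monomials. This does not affect the structure of your proof.
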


We call $\Pi_{\ell}(D)$ the $T^{\ell}$-radial part of $D\in U(\lau^{\C})^{\lak^{\C}}$. In 
particular we have \[\Pi_{\ell}(D)(\Phi^{\ell}_{\ell_{1},\ell_{2}}|_{A})=\lambda_{D,\ell_{1},
\ell_{2}}^{\ell}\Phi^{\ell}_{\ell_{1},\ell_{2}}|_{A},\quad \lambda_{D,\ell_{1},
\ell_{2}}^{\ell}\in\C.
\]
Upon identifying $\mathrm{End}_{M}(H^{\ell})\cong\C^{2\ell+1}$ we observe that we may view 
$\Pi_{\ell}(D)$ as differential operator of the $\mathrm{End}(\C^{2\ell+1})$-valued functions that 
act on from the left. In particular, let $C(A,\mathrm{End}(\C^{2\ell+1}),T^{\ell})$ denote the 
vector space generated by the $\Phi_{d}^{\ell}, d\ge0$. The following lemma follows immediately 
from the construction.

\begin{lem} Let $D\in U(\lau^{\C})^{\lak^{\C}}$ be self-adjoint and consider $\Pi_{\ell}(D)$ as a 
differential operator acting on $C(A,\mathrm{End}(\C^{2\ell+1}),T^{\ell})$ from the left. Then $
\Pi_{\ell}(D)$ is self-adjoint for $\langle\cdot,\cdot\rangle_{V^{\ell}}$.
\end{lem}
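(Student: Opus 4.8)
The plan is to show that self-adjointness of a differential operator is preserved under the radial part map $\Pi_\ell$, restricted to the space $C(A,\mathrm{End}(\C^{2\ell+1}),T^\ell)$ spanned by the spherical functions $\Phi_d^\ell$. First I would unwind the definitions: the inner product $\langle\cdot,\cdot\rangle_{V^\ell}$ is given by $\int_A Q(a)V^\ell(a)P(a)^\ast\,da$ with $V^\ell(a_t)=(\Phi_0^\ell(a_t))^\ast\Phi_0^\ell(a_t)\sin^2 t$, so I must verify that $\langle \Pi_\ell(D)Q,P\rangle_{V^\ell}=\langle Q,\Pi_\ell(D)P\rangle_{V^\ell}$ for all $Q,P$ in this space. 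The natural strategy is to lift the computation back to the group $U$, where self-adjointness of $D\in U(\lau^\C)^{\lak^\C}$ is a statement about an $L^2$-type pairing against Haar measure, and then use Theorem \ref{thm: radial part} to descend via $(DF)|_A=\Pi_\ell(D)(F|_A)$.

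The key steps, in order, are as follows. First I would recall that self-adjointness of $D$ in $U(\lau^\C)$ means $D=D^\ast$ under the $\ast$-operation on the enveloping algebra induced by the compact real form, equivalently that $\int_U (D\cdot f)\,\overline{g}\,du = \int_U f\,\overline{(D\cdot g)}\,du$ for suitable matrix-coefficient functions $f,g$ built from the spherical functions. Second, I would relate the $L^2(U)$ pairing of functions with $T^\ell$-transformation behaviour \eqref{eq: trafo} to the pairing $\langle\cdot,\cdot\rangle_{V^\ell}$ over $A$: using the integration formula for the decomposition $U=KAK$, the Haar integral of a product of two such $\mathrm{End}(H^\ell)$-valued functions collapses (after using the $K$-equivariance and Schur orthogonality for $T^\ell$) to an integral over $A$ against a weight proportional to $\sin^2 t$, which is exactly the weight defining $V^\ell$. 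This is the mechanism by which the group inner product becomes the matrix weight inner product. Third, combining these with Theorem \ref{thm: radial part}, the identity $(DF)|_A=\Pi_\ell(D)(F|_A)$ transports the self-adjointness relation on $U$ verbatim to the relation $\langle\Pi_\ell(D)Q,P\rangle_{V^\ell}=\langle Q,\Pi_\ell(D)P\rangle_{V^\ell}$ on $A$, provided every element of $C(A,\mathrm{End}(\C^{2\ell+1}),T^\ell)$ is the restriction of a genuine function on $U$ satisfying \eqref{eq: trafo}, which holds by construction since this space is spanned by the $\Phi_d^\ell|_A$.

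The main obstacle I expect is the second step: carefully carrying out the $KAK$-integration so that the group $L^2$-pairing of two $\mathrm{End}(H^\ell)$-valued, $T^\ell$-equivariant functions reduces cleanly to $\int_A Q V^\ell P^\ast\,da$ with the correct weight and constants. One must handle the Jacobian of the $U=KAK$ decomposition (giving the $\sin^2 t$ factor), integrate out the two $K$-variables using the equivariance \eqref{eq: trafo} together with orthogonality relations for the matrix coefficients of $T^\ell$, and keep track of how the conjugation by $\Phi_0^\ell$ implicit in the definition of $V^\ell$ versus the raw spherical functions $\Phi_d^\ell$ enters; this bookkeeping, rather than any deep idea, is where the care is needed. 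Once this reduction is in hand, self-adjointness of $\Pi_\ell(D)$ follows immediately, and I note that Corollary \ref{cor:squarednormHn} and \cite[Cor.~5.7]{KRvP} confirm the orthogonality structure is consistent. A final remark: the lemma is stated for the full (reducible) weight $V^\ell$, and since the $\Phi_d^\ell$ and the operator $\Pi_\ell(D)$ are defined intrinsically on $C(A,\mathrm{End}(\C^{2\ell+1}),T^\ell)$, no irreducibility is needed and the argument applies uniformly in $\ell$.
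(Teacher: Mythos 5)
Your proposal is correct, and it supplies essentially the argument that the paper compresses into the single sentence ``the following lemma follows immediately from the construction'' --- the authors give no proof at all, so what you have written is the missing elaboration rather than a genuinely different route. Two refinements are worth recording. First, in the $KAK$ reduction the two $K$-integrations are nearly free: for the scalar pairing $\int_U\mathrm{tr}\bigl(F(u)G(u)^\ast\bigr)\,du$ of two functions with the equivariance \eqref{eq: trafo}, unitarity of $T^{\ell}$ alone eliminates both $K$-variables, and Schur's lemma is only needed if one insists on recovering the matrix-valued pairing directly; since symmetry of a matrix-valued operator for $\langle\cdot,\cdot\rangle_{V^{\ell}}$ amounts to symmetry of the induced scalar pairing on the row span of the $\Phi_{d}^{\ell}$, the scalar version suffices. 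Second, the $\Phi_{0}^{\ell}$-bookkeeping you flag is exactly where the paper's notation is loose: the $L^2(U)$-pairing restricts on equivariant functions to $\int_A F(a)^\ast G(a)\sin^2 t\,dt$, and substituting $F=\Phi_{0}^{\ell}Q$ turns this into $\langle\cdot,\cdot\rangle_{V^{\ell}}$ for the polynomials $Q$, so the lemma tacitly identifies the span of the $\Phi_{d}^{\ell}$ with that of the $Q_{d}^{\ell}$ via left multiplication by $(\Phi_{0}^{\ell})^{-1}$; self-adjointness survives this conjugation precisely because the weight transforms by the same factor. I would also point out a shortcut that bypasses the Jacobian computation entirely: the rows of the $\Phi_{d}^{\ell}$ are the restricted spherical functions, which by Proposition \ref{prop: Csf eigenv} are eigenfunctions of $\Pi_{\ell}(D)$, with real eigenvalues because a self-adjoint $D$ acts on the $T^{\ell}$-isotypical component of the unitary representation $T^{\ell_1,\ell_2}$ as a real scalar, and they are mutually orthogonal for the inner product in question by Schur orthogonality on $U$ (this is the content of the cited orthogonality of the $Q_{d}^{\ell}$); an operator with an orthogonal eigenbasis and real eigenvalues is symmetric on the span. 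Either way the conclusion holds, and your argument is sound.
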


\begin{defin}\label{def:to dif op pol}
Let $f\colon U(\lau^{\C})^{\lak^{\C}}\to\mathcal{D}(W^{\ell})$ be defined by sending $D$ to the 
conjugation of the differential operator $\Pi_{\ell}(D)$ by $\Phi_{0}^{\ell}$ followed by changing 
the
variable $x=\phi(a)$.
\end{defin}

The map $f\colon U(\lau^{\C})^{\lak^{\C}}\to\mathcal{D}(W^{\ell})$ is an algebra homomorphism. It gives 
an abstract description of a part of $\mathcal{D}(W^{\ell})$. Note that $f$ is not surjective 
because in \cite[Prop.~8.1]{KRvP} we have found a differential operator that does not commute with 
some of the other. However, the algebra $U(\lau^{\C})^{\lak^{\C}}$ is commutative by Lemma 
\ref{lem: grp alg}.

By means of Lemma \ref{lem: grp alg} we know that $U(\lau^{\C})^{\lak^{\C}}$ has 
$\Omega_{1}=\Omega_{\lak}\otimes1$ and $\Omega_{2}=1\otimes\Omega_{\lak}$ among its generators, 
where $\Omega_{\lak}\in\laZ$ is the Casimir operator. In the following subsection we calculate 
$f(\Omega_{1}+\Omega_{2})$ and $f(\Omega_{1}-\Omega_{2})$ explicitly. Upon transposing and taking 
suitable linear combinations we find the differential operators $\tilde{D}$ and $\tilde{E}$ from 
Theorem \ref{thm:differentialoperatorsP}.


\subsection{Calculation of the Casimir operators}\label{ssec:calculationCasimiropertors}

The goal of this subsection is to calculate $f(\Omega)$ and $f(\Omega')$ where $f$ is the map 
described in Definition \ref{def:to dif op pol} and where $\Omega=\Omega_{1}+\Omega_{2}$ and 
$\Omega'=\Omega_{1}-\Omega_{2}$. We proceed in a series of six steps. (1) First we provide 
expressions for the Casimir operators $\Omega$ and $\Omega'$ which (2) we rewrite according to the 
infinitesimal Cartan decomposition defined by Casselman and Mili\v{c}i\'c \cite[\S2]{CM1982}. These 
calculations are similar to those in \cite[Prop.~9.1.2.11]{WarnerII}. (3) From this expression we 
can easily calculate the $T^{\ell}$-radial parts, see Theorem \ref{thm: radial part}. The radial 
parts are differential operators for $\mathrm{End}_{M}(H^{\ell})$-valued functions on $A$. At this 
point we  see that we can extend matters to the complexification $A^\C$ of $A$ as in \cite[Ex.~3.7]{CM1982}. 
(4) We identify $\mathrm{End}_{M}(H^{\ell})\cong\C^{2\ell+1}$ and rewrite the radial 
parts of step 3 accordingly. (5) We conjugate these differential operators with $\Phi_{0}$ and (6) 
we make a change of variables to obtain two matrix-valued differential operators $f(\Omega)$ and 
$f(\Omega')$. Along the way we keep track of the differential equations for the spherical 
functions. Finally we give expressions for the eigenvalues $\Lambda_{d}$ and $\Gamma_{d}$ of 
$f(\Omega)$ and $f(\Omega')$ such that the full spherical polynomials $Q_{d}$ are the corresponding 
eigenfunctions. Following Casselman and Mili\v{c}i\'c \cite[\S2]{CM1982} the roots are considered 
as characters, hence written multiplicatively.

\textbf{(1).} First we concentrate on one factor $K\cong\SU(2)$, with Lie algebra $\lak$ and 
standard Cartan subalgebra $\lat$. The complexifications are denoted by $\lak^\C$, $\lat^\C$ and we 
use the standard basis 
\begin{equation*}
H=\begin{pmatrix}1&0\\0&-1\end{pmatrix},\quad E_{\al}=\frac{1}{2}\begin{pmatrix}0&1\\0&0\end{pmatrix},\quad E_{\al^{-1}}=\frac{1}{2}\begin{pmatrix}0&0\\1&0\end{pmatrix}
\end{equation*}
for $\lak^\C$. 
The Casimir of $K$ is given by $\Om_{\lak}=
\frac{1}{2}H^{2}+4\left\{E_{\alpha}E_{\alpha^{-1}}+E_{\alpha^{-1}}E_{\alpha}\right\}$. It is well-known 
that the matrix-elements of the irreducible unitary representation $T^\ell$ of $\SU(2)$ are 
eigenfunctions of the Casimir operator $\Om_{\lak}$ for the eigenvalue $\frac12(\ell^2+\ell)$, see 
e.g. \cite[Thm.~5.28]{KnappLGBI}. 
The roots of the pair $(\lak^\C\oplus\lak^\C,\lat^\C\oplus\lat^\C)$ are given by $R=\{(\al,1),
(\al^{-1},1),(1,\al),(1,\al^{-1})\}$. The positive roots are choosen as $R^+ = \{ (\al,1), (1,
\al^{-1})\}$, so that the two positive roots restrict to the same root $R(\lau^\C,\laa^\C)$ which 
we declare positive. The corresponding root vectors are $E_{(\al,1)}=(E_{\al},0)$, etc. Define
\begin{equation*}
E=(E_{\alpha},0)(E_{\alpha^{-1}},0)+(E_{\alpha^{-1}},0)(E_{\alpha},0).
\end{equation*}
Then we have $\Om_1= \frac12(H,0)^2+4E$ and $\Om_2 =\theta(\Om_1)$. In particular, the spherical 
function
$\Phi^\ell_{\ell_1,\ell_2}$ is an eigenfunction of $\Om_i$ for the eigenvalue $\frac12(\ell_i^2+\ell_i)$ for $i=1,2$. 

We have $(H,0)=\frac{1}{2}((H,-H)+(H,H))$ and $(0,H)=\frac{1}{2}((H,H)-(H,-H))$ and from this we find in $U(\lau^\C)$
\begin{equation}
\begin{split}
\Om\, &= \, \Om_{1}+\Om_{2}\, =\, \frac14 (H,H)^2 +\frac14 (H,-H)^2+4(E+\theta(E)),\\
\Om'\, &=\, \Om_{1}-\Om_{2}\, =\, \frac12 (H,-H)(H,H)+4(E-\theta(E)).
\end{split}
\end{equation}

\textbf{(2).} Following Casselman and Mili\v{c}i\'c \cite[\S2]{CM1982} we can express $\Omega$ and 
$\Omega'$ according to the infinitesimal Cartan decomposition of $U(\lau^{\C})$. Let $\be\in R$ 
and denote $X_{\be}=E_{\be}+\theta(E_{\be})\in\lak^{\C}$. Denote $Y^{a}=\Ad(a^{-1})Y$ for 
$a\in A$. In \cite[Lemma 2.2]{CM1982} it is proved that the equality
\[(1-\be(a)^{2})X_{\be}=\be(a)(E_{\be}^{a}-\be(a)E_{\be})\]
holds for all $a\in A_{reg}$. This is the key identity in a straightforward but tedious calculation 
to prove the following proposition which we leave to the reader.

\begin{prop} Let $a\in A_{reg}$ and $\be\in R^{+}$. Then
\begin{multline}\label{eq:explexprOmega}
 \Om =\frac{1}{16}\left((H,-H)^{2}+(H,H)^{2}\right)-\frac{2}{(\be(a)^{-1}-\be(a))^{2}}\{X_{\be}^{a}X_{\be^{-1}}^{a}+\\
X_{\be^{-1}}^{a}X_{\be}^{a}+X_{\be}X_{\be^{-1}}+X_{\be^{-1}}X_{\be}-(\be(a)+\be(a)^{-1})(X_{\be}^{a}X_{\be^{-1}}+X_{\be^{-1}}^{a}X_{\be})\}\\
+\frac{1}{4}\frac{\be(a)+\be(a)^{-1}}{\be(a)-\be(a)^{-1}}(H,-H).
\end{multline}
and 
\begin{multline}\label{eq:explexprOmegaaccent}
 \Om' =\frac{1}{8}(H,H)(H,-H)+
\frac{1}{4}\frac{\be(a)+\be(a)^{-1}}{\be(a)-\be(a)^{-1}}(H,H)\\ 
+\frac{2}{\be(a)-\be(a)^{-1}}(X_{\be^{-1}}^{a}X_{\be}-X_{\be}^{a}X_{\be^{-1}}), 
\end{multline}
\end{prop}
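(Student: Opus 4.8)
The plan is to start from the step~(1) expressions
\[
\Om=\tfrac14\bigl((H,H)^2+(H,-H)^2\bigr)+4\bigl(E+\theta(E)\bigr),\qquad
\Om'=\tfrac12(H,-H)(H,H)+4\bigl(E-\theta(E)\bigr),
\]
and to eliminate every root vector in favour of the $\lak^\C$-elements $X_\be,X_{\be^{-1}}$ and their $\Ad(a^{-1})$-conjugates $X_\be^a,X_{\be^{-1}}^a$; under the transformation behaviour \eqref{eq: trafo} the unconjugated $X_\be,X_{\be^{-1}}$ act by the representation $T^\ell$, which is exactly the shape needed to read off a radial part later. The mechanism is the key identity. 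Since $E_\be$ and $\theta(E_\be)$ are the $\Ad(a^{-1})$-eigenvectors occurring in $X_\be=E_\be+\theta(E_\be)$, with eigenvalues $\be(a)^{-1}$ and $\be(a)$, the two linear relations $X_\be=E_\be+\theta(E_\be)$ and $X_\be^a=\be(a)^{-1}E_\be+\be(a)\theta(E_\be)$ solve to
\[
E_\be=\frac{X_\be^a-\be(a)X_\be}{\be(a)^{-1}-\be(a)},\qquad
\theta(E_\be)=\frac{X_\be^a-\be(a)^{-1}X_\be}{\be(a)-\be(a)^{-1}},
\]
and likewise $E_{\be^{-1}},\theta(E_{\be^{-1}})$ are obtained from $X_{\be^{-1}},X_{\be^{-1}}^a$.

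First I would substitute these four expressions into $E=E_\be E_{\be^{-1}}+E_{\be^{-1}}E_\be$ and into $\theta(E)=\theta(E_\be)\theta(E_{\be^{-1}})+\theta(E_{\be^{-1}})\theta(E_\be)$ and expand. As $E_\be$ lives in the first and $\theta(E_\be)$ in the second $\SU(2)$-factor the two factors commute, each product acquires the common denominator $(\be(a)^{-1}-\be(a))^2$, and collecting $E+\theta(E)$ reproduces the bracket of \eqref{eq:explexprOmega} with prefactor $-\tfrac{2}{(\be(a)^{-1}-\be(a))^2}$, except that the symmetric products $X_\be X_{\be^{-1}}$ and $X_{\be^{-1}}X_\be$ first enter with the ``wrong'' $\be(a)^{\pm2}$-coefficients instead of the unit coefficients of the bracket. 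Symmetrising them against those unit coefficients is the decisive step: it leaves the bracket intact and throws off a commutator remainder, which is precisely the source of the first-order term.

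That remainder is controlled by three structure identities, $[X_\be,X_{\be^{-1}}]=\tfrac14(H,H)$ together with the conjugated brackets $[X_\be^a,X_{\be^{-1}}]$ and $[X_\be,X_{\be^{-1}}^a]$, each of which a short $\Ad(a^{-1})$-computation splits into an $(H,H)$-part proportional to $\be(a)+\be(a)^{-1}$ and an $(H,-H)$-part proportional to $\be(a)^{-1}-\be(a)$, the latter with opposite signs for the two brackets. In the $\theta$-invariant combination $E+\theta(E)$ all the $(H,H)$-contributions cancel and only the $(H,-H)$-part survives, assembling after division by the denominator into $\tfrac14\tfrac{\be(a)+\be(a)^{-1}}{\be(a)-\be(a)^{-1}}(H,-H)$; the diagonal second-order terms, unaffected by the substitution, account for the first summand of \eqref{eq:explexprOmega}. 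For $\Om'$ I would run the identical scheme with $E-\theta(E)$: the antisymmetric second-order part now survives with the single denominator $\be(a)-\be(a)^{-1}$, giving $\tfrac{2}{\be(a)-\be(a)^{-1}}\bigl(X_{\be^{-1}}^aX_\be-X_\be^aX_{\be^{-1}}\bigr)$, the two Cartan directions swap roles so that the $(H,-H)$-parts cancel, and the surviving first-order term is the $(H,H)$-term of \eqref{eq:explexprOmegaaccent}.

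I expect the only real obstacle to be bookkeeping rather than ideas: keeping all the non-commuting products in one fixed order throughout the substitution, and verifying that the many rational functions of $\be(a)$ coming from the denominators $(\be(a)^{-1}-\be(a))^2$, the eigenvalue factors $\be(a)^{\pm1}$ and the commutator coefficients collapse to exactly the two functions $(\be(a)^{-1}-\be(a))^{-2}$ and $\tfrac{\be(a)+\be(a)^{-1}}{\be(a)-\be(a)^{-1}}$ that appear in the statement. Since nothing conceptual intervenes beyond the key identity and the three commutator identities, the cleanest way to secure the final collection of terms is to carry it out once by hand and confirm it with the computer algebra used elsewhere in the paper.
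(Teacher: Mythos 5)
Your proposal is exactly the computation the paper leaves to the reader: the printed proof consists of quoting the key identity from Casselman--Mili\v{c}i\'c and declaring the rest a ``straightforward but tedious calculation'' analogous to \cite[Prop.~9.1.2.11]{WarnerII}, and your elimination of $E_\be,\theta(E_\be)$ by solving the $2\times2$ system in $X_\be$, $X_\be^a=\Ad(a^{-1})X_\be$, followed by symmetrisation at the cost of commutators, is precisely that calculation, and it does close up to the two stated formulas. Two bookkeeping corrections. First, after expanding $E+\theta(E)$ the unconjugated products $X_\be X_{\be^{-1}}+X_{\be^{-1}}X_\be$ (and likewise the fully conjugated ones) already come out with the correct unit coefficients, since the factors $\be(a)^{\pm1}$ from $E_\be$ and $E_{\be^{-1}}$ multiply to $1$; the terms that actually need reordering are the mixed products $X_\be X_{\be^{-1}}^a$ and $X_{\be^{-1}}X_\be^a$, and it is the brackets $[X_\be,X_{\be^{-1}}^a]$ and $[X_{\be^{-1}},X_\be^a]$ --- each splitting as $\pm\frac{\be(a)+\be(a)^{-1}}{8}(H,H)+\frac{\be(a)-\be(a)^{-1}}{8}(H,-H)$ --- that produce the first-order terms with exactly the cancellation pattern you describe ($(H,H)$-parts cancel and $(H,-H)$-parts add for $\Om$, and the roles swap for $\Om'$); the identity $[X_\be,X_{\be^{-1}}]=\frac14(H,H)$ is not needed. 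Second, if you start literally from the displayed step-(1) normalisation $\Om=\frac14\bigl((H,H)^2+(H,-H)^2\bigr)+4(E+\theta(E))$ you will land on four times the right-hand sides of the Proposition; the Proposition, the claimed Casimir eigenvalue $\frac12(\ell^2+\ell)$ and the radial parts \eqref{eq: rad second}--\eqref{eq: rad first} are all consistent with $\Om_1=\frac18(H,0)^2+E$, so the overall factor of $4$ is an internal inconsistency of the paper's step (1), not a defect of your method.
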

The calculation of \eqref{eq:explexprOmega} is completely analogous to that of 
\cite[Prop.~9.1.2.11]{WarnerII} and it is clear that \eqref{eq:explexprOmega} is invariant for 
interchanging $\be$ and $\be^{-1}$. The expression in \eqref{eq:explexprOmegaaccent} is also 
invariant for interchanging $\be$ and $\be^{-1}$ albeit that it is less clear in this case. In 
either case the expressions \eqref{eq:explexprOmega} and \eqref{eq:explexprOmegaaccent} do not 
depend on the choice of $\be\in R^{+}$.

\textbf{(3).} Following Casselman and Mili\v{c}i\'c \cite[\S 3]{CM1982} we calculate the 
$T^{\ell}$-radial parts of $\Om$ and $\Om'$. This is a matter of applying the map $\Pi_{\ell}$ from Theorem \ref{thm: radial part} to the expressions \eqref{eq:explexprOmega} and 
\eqref{eq:explexprOmegaaccent}. At the same time we note that the coefficients in 
\eqref{eq:explexprOmega} and \eqref{eq:explexprOmegaaccent} are analytic functions on $A_{reg}$. 
They extend to meromorphic functions on the complexification $A^{\C}$ of $A$ which we identify with 
$\C^{\times}$ using the map
\[a:\C^{\times}\to A^{\C}:w\mapsto a(w)=\left(\left(\begin{array}{cc}w&0\\0&w^{-1}\end{array}\right),\left(\begin{array}{cc}w^{-1}&0\\0&w\end{array}\right)\right).\]
Under this isomorphism the differential operator $(H,-H)$ translates to $w\frac{d}{dw}$. To see 
this let $g:A^{\C}\to\C$ be holomorphic and consider $(H,-H)g(a(w))$ which is equal to
\[(H,-H)g(a(w))=\left\{\frac{d}{dt}g(a(e^{t}w))\right\}_{t=0}=w\frac{d}{dw}(g\circ a)(w).\]
Following \cite{CM1982}, \cite{WarnerII} we find the following expressions for the $T^{\ell}$-radial parts of $\Omega$ and $\Omega'$;

\begin{multline}\label{eq: rad second} 
\Pi_{\ell}(\Om)=\frac{1}{16}\left(w\frac{d}{dw}\right)^{2}+\frac{1}{4}\frac{w^{2}+w^{-2}}{w^{2}-w^{-2}}w\frac{d}{dw}+\frac{1}{16}T^{\ell}(H)^{2}+\\
-\frac{2}{(w^{2}-w^{-2})^{2}}\left\{T^{\ell}(E_{\alpha})T^{\ell}(E_{\alpha^{-1}})\bullet+T^{\ell}(E_{\alpha^{-1}})T^{\ell}(E_{\alpha})\bullet+
\right.\\\left.
\bullet T^{\ell}(E_{\alpha})T^{\ell}(E_{\alpha^{-1}})+\bullet
T^{\ell}(E_{\alpha^{-1}})T^{\ell}(E_{\alpha})\right\}+\\
2\frac{w^{2}+w^{-2}}{(w^{2}-w^{-2})^{2}}\left\{T^{\ell}(E_{\alpha})\bullet
T^{\ell}(E_{\alpha^{-1}})+ T^{\ell}(E_{\alpha^{-1}})\bullet
T^{\ell}(E_{\alpha})\right\},
\end{multline}
and
\begin{multline}\label{eq: rad first}
 \Pi_{\ell}(\Om')=\frac{1}{8}T^{\ell}(H)w\frac{d}{dw}+\frac{1}{4}\frac{w^{2}+w^{-2}}{w^{2}-w^{-2}}T^{\ell}(H)+\\
\frac{2}{w^{2}-w^{-2}}\left\{T^{\ell}(E_{\alpha^{-1}})\bullet
T^{\ell}(E_{\alpha})+ T^{\ell}(E_{\alpha})\bullet
T^{\ell}(E_{\alpha^{-1}})\right\}
\end{multline}
where the bullet $(\bullet)$ indicates where to put the restricted
spherical function. The matrices $T^{\ell}(E_{\alpha})$ and $T^{\ell}(H)$ are easily calculated in 
the basis of weight vectors. Note that $T^{\ell}(E_{\alpha^{-1}})=JT^{\ell}(E_{\alpha})J$. We give 
the entries of $T^{\ell}(E_{\alpha})$ in the proof of Lemma \ref{lemma: conj wth UP}.

The following proposition is a direct consequence of Theorem \ref{thm: radial part} and Proposition 
\ref{prop: Csf eigenv}.
\begin{prop} The restricted spherical functions are eigenfunctions of the radial parts of $\Omega$ and $\Omega'$,
\begin{gather*}
\Pi_\ell(\Om)(\Phi_\ell^{\ell_1,\ell_2}|_{A^{\C}})\, = \, \frac12(\ell_1^2+\ell_1+\ell_2^2+\ell_2)\Phi_\ell^{\ell_1,\ell_2}|_{A^{\C}}, \\
\Pi_\ell(\Om')(\Phi_\ell^{\ell_1,\ell_2}|_{A^{\C}})\, = \, \frac12(\ell_1^2+\ell_1-\ell_2^2-\ell_2)\Phi_\ell^{\ell_1,\ell_2}|_{A^{\C}}.
\end{gather*}
\end{prop}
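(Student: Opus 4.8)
The plan is to combine the two cited results with the elementary eigenvalue computation already recorded in step (1). First I would observe that $\Om=\Om_{1}+\Om_{2}$ and $\Om'=\Om_{1}-\Om_{2}$ both lie in $U(\lau^{\C})^{\lak^{\C}}$: by Lemma \ref{lem: grp alg} the central elements $\Om_{1}=\Om_{\lak}\otimes 1$ and $\Om_{2}=1\otimes\Om_{\lak}$ are among the generators of this algebra, so any linear combination is again $\lak^{\C}$-invariant. Hence both Proposition \ref{prop: Csf eigenv} and Theorem \ref{thm: radial part} apply to $\Om$ and $\Om'$.

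Next I would record the eigenvalue of the Casimir operators directly on $U$. As noted in step (1), the matrix elements of $T^{\ell}$ are eigenfunctions of $\Om_{\lak}$ for the eigenvalue $\frac12(\ell^{2}+\ell)$, and therefore the spherical function $\Phi^{\ell}_{\ell_{1},\ell_{2}}$ is an eigenfunction of $\Om_{i}$ for the eigenvalue $\frac12(\ell_{i}^{2}+\ell_{i})$, $i=1,2$. Adding and subtracting these two relations yields
\[
\Om\,\Phi^{\ell}_{\ell_{1},\ell_{2}}=\tfrac12(\ell_{1}^{2}+\ell_{1}+\ell_{2}^{2}+\ell_{2})\,\Phi^{\ell}_{\ell_{1},\ell_{2}},\qquad
\Om'\,\Phi^{\ell}_{\ell_{1},\ell_{2}}=\tfrac12(\ell_{1}^{2}+\ell_{1}-\ell_{2}^{2}-\ell_{2})\,\Phi^{\ell}_{\ell_{1},\ell_{2}},
\]
as identities on $U$; by Proposition \ref{prop: Csf eigenv} they persist after the holomorphic extension to $U^{\C}$ coming from Weyl's unitary trick.

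The final step is to transport these identities through the radial-part map and restrict to $A^{\C}$. Since $\Phi^{\ell}_{\ell_{1},\ell_{2}}$ satisfies the transformation behaviour \eqref{eq: trafo}, Theorem \ref{thm: radial part} gives $(\Om\,\Phi^{\ell}_{\ell_{1},\ell_{2}})|_{A}=\Pi_{\ell}(\Om)(\Phi^{\ell}_{\ell_{1},\ell_{2}}|_{A})$, and the meromorphic continuation of the coefficients of $\Pi_{\ell}(\Om)$ to $A^{\C}$ noted in step (3) lets me read this off on $A^{\C}$ as well. Substituting the eigenvalue identity into the left-hand side then produces exactly the claimed equation for $\Pi_{\ell}(\Om)$; running the same argument with $\Om'$ gives the companion statement.

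I do not expect a genuine obstacle here, since the proposition is really an assembly of facts that are all in place: the statement follows the moment one realises that the ``hard'' analytic content lives entirely in the explicit formulas \eqref{eq: rad second} and \eqref{eq: rad first} for the radial parts, whereas the eigenvalues themselves are forced by the intertwining property of $\Pi_{\ell}$. The only point deserving a word of care is the passage from $A$ to $A^{\C}$, and this is already covered by the extension of the spherical functions via Proposition \ref{prop: Csf eigenv} together with the analytic continuation of the radial part mentioned in step (3).
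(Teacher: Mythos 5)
Your proposal is correct and follows exactly the route the paper takes: the paper's proof is the single remark that the proposition is ``a direct consequence of Theorem \ref{thm: radial part} and Proposition \ref{prop: Csf eigenv}'', and you have simply filled in the details (the eigenvalues $\frac12(\ell_i^2+\ell_i)$ of $\Om_i$ from step (1), their sum and difference, and the transport through $\Pi_\ell$ with analytic continuation to $A^{\C}$).
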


\textbf{(4).} The spherical functions $\Phi_{\ell_{1},\ell_{2}}^{\ell}$ restricted to the torus $A^{\C}$ take their values in $\mathrm{End}_{M}(H^{\ell})$ and this is a $2\ell+1$-dimensional vector space. We identify
\[\mathrm{End}_{M}(H^{\ell})\to\C^{2\ell+1}:Y\mapsto Y^{\mathrm{up}}\]
to obtain functions $(\Phi_{\ell_{1},\ell_{2}}^{\ell}|_{A^{\C}})^{\mathrm{up}}$. The reason for 
putting the diagonals up is that we want to write the differential operators as differential 
operators with coefficients in the function algebra on $A$ with values in $\mathrm{End}
(\C^{2\ell+1})$ instead of the way $\Pi_{\ell}(\Omega)$ and $\Pi_{\ell}(\Omega')$ are defined. The 
differential operators that are conjugated to act on $\C^{2\ell+1}$-valued functions are also 
denoted by $(\cdot)^{\mathrm{up}}$. The differential operators \eqref{eq: rad second} and 
\eqref{eq: rad first} that are defined for $\mathrm{End}_{M}(H^{\ell})$-valued functions conjugate 
to differential operators $\Pi_{\ell}(\Omega)^{\mathrm{up}}$ and $\Pi_{\ell}
(\Omega')^{\mathrm{up}}$ for $\C^{2\ell+1}$-valued functions. All the terms except for the last 
ones in \eqref{eq: rad second} and \eqref{eq: rad first} transform straightforwardly. 

\begin{lem}\label{lemma: conj wth UP}
The linear isomorphism $\mathrm{End}_{M}(H^{\ell})\to\C^{2\ell+1}:D\mapsto D^{\mathrm{up}}$ 
conjugates the linear map $\mathrm{End}_{M}(H^{\ell})\to\mathrm{End}_{M}(H^{\ell}):D\mapsto T^{\ell}(E_{\alpha})DT^{\ell}(E_{\alpha^{-1}})$ to 
$\C^{2\ell+1}\to\C^{2\ell+1}:D^{\mathrm{up}}\mapsto C^{\ell}D^{\mathrm{up}}$, where 
$C^{\ell}\in\mathrm{End}(\C^{2\ell+1})$ is the matrix given by
\[C^{\ell}_{p,j}=\frac{1}{4}(\ell+j)(\ell-j+1)\delta_{j-p,1}, \quad \ell\le p,j\le\ell.\]
Likewise, $D\mapsto T^{\ell}(E_{\alpha^{-1}})DT^{\ell}(E_{\alpha})$ transforms to 
$D^{\mathrm{up}}\mapsto JC^{\ell}JD^{\mathrm{up}}$, where $J$ is the anti-diagonal defined by 
$J_{ij}=\delta_{i,-j}$ with $-\ell\le i,j\le\ell$.
\end{lem}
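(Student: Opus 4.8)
The plan is to reduce everything to an explicit ladder-operator computation in the weight basis of $H^{\ell}$. First I would fix a basis $\{v_{j}\}_{j=-\ell}^{\ell}$ of $H^{\ell}$ consisting of $M$-weight vectors ordered by weight, and record the action of $T^{\ell}(E_{\alpha})$ and $T^{\ell}(E_{\alpha^{-1}})$ on it. With the normalisation of $E_{\alpha}$ fixed above (so that $E_{\alpha}=\frac12 J_{+}$ in the defining representation), $T^{\ell}(E_{\alpha})$ is the single-off-diagonal matrix with $T^{\ell}(E_{\alpha})v_{j}=\frac12\sqrt{(\ell-j)(\ell+j+1)}\,v_{j+1}$, and, using the stated relation $T^{\ell}(E_{\alpha^{-1}})=JT^{\ell}(E_{\alpha})J$, one gets $T^{\ell}(E_{\alpha^{-1}})v_{j}=\frac12\sqrt{(\ell+j)(\ell-j+1)}\,v_{j-1}$. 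These are the entries of $T^{\ell}(E_{\alpha})$ that the proof would list at the outset.

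Next I would identify $\mathrm{End}_{M}(H^{\ell})$. Since $M=Z_{K}(A)$ is the diagonal torus of $K\cong\SU(2)$ and it acts on the one-dimensional weight space $\C v_{j}$ by the character $z\mapsto z^{2j}$, and these characters are pairwise distinct, the commutant $\mathrm{End}_{M}(H^{\ell})$ is exactly the algebra of diagonal matrices, with $D\mapsto D^{\mathrm{up}}$ recording the diagonal. I would then note that $D\mapsto T^{\ell}(E_{\alpha})DT^{\ell}(E_{\alpha^{-1}})$ indeed preserves $\mathrm{End}_{M}(H^{\ell})$: conjugating by $T^{\ell}(m)$, $m\in M$, multiplies the two outer factors by $\alpha(m)$ and $\alpha(m)^{-1}$ since $\Ad(m)E_{\alpha}=\alpha(m)E_{\alpha}$, and these cancel while the inner $D$ is fixed. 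Hence the map descends to a well-defined endomorphism of $\C^{2\ell+1}$, which is what must be computed.

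The core step is the direct evaluation. For diagonal $D$ with entries $d_{j}$ I would apply $T^{\ell}(E_{\alpha})DT^{\ell}(E_{\alpha^{-1}})$ to $v_{p}$: the lowering operator produces $v_{p-1}$, $D$ scales by $d_{p-1}$, and the raising operator returns $v_{p}$, so the two square-root coefficients multiply to the clean value $\frac14(\ell+p)(\ell-p+1)$ and all radicals cancel. Reading off the resulting diagonal and matching to $D^{\mathrm{up}}\mapsto C^{\ell}D^{\mathrm{up}}$ yields a single off-diagonal $C^{\ell}$ with the coefficient $\frac14(\ell+j)(\ell-j+1)$ and the one-step shift recorded by $\delta_{j-p,1}$. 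For the second statement I would avoid recomputing and instead use $T^{\ell}(E_{\alpha^{-1}})=JT^{\ell}(E_{\alpha})J$ together with $J^{2}=I$ to write $T^{\ell}(E_{\alpha^{-1}})DT^{\ell}(E_{\alpha})=J\,T^{\ell}(E_{\alpha})(JDJ)T^{\ell}(E_{\alpha^{-1}})\,J$; since conjugation by $J$ sends diagonal matrices to diagonal matrices and, under $(\cdot)^{\mathrm{up}}$, corresponds exactly to the action of the anti-diagonal $J$ on $\C^{2\ell+1}$, the transported map is $D^{\mathrm{up}}\mapsto JC^{\ell}JD^{\mathrm{up}}$.

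I expect the only delicate point to be the bookkeeping: pinning down the exact coefficient $\frac14(\ell+j)(\ell-j+1)$ and, above all, the direction of the index shift $\delta_{j-p,1}$, since the naive computation produces the transpose pattern and the correct orientation depends on the chosen ordering of the weight basis and on the precise convention defining $(\cdot)^{\mathrm{up}}$. Once those conventions are fixed, the computation of the first map is a one-line ladder-operator calculation, and the $J$-conjugation argument for the second map is purely formal.
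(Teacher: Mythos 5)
Your proposal is correct and follows essentially the same route as the paper: an explicit ladder-operator computation in the weight basis (the paper cites Koornwinder's normalised basis for the entries of $T^{\ell}(E_{\alpha})$ and dismisses the rest as ``elementary manipulations''), with the second map obtained from the first via $T^{\ell}(E_{\alpha^{-1}})=JT^{\ell}(E_{\alpha})J$. Your closing caveat about the orientation of the shift $\delta_{j-p,1}$ is well taken --- the paper's own conventions here are ambiguous (its displayed formula for $T^{\ell}(E_{\alpha})_{ij}$ even contains the vacuous $\delta_{i,i+1}$) --- but the invariant content, namely that the product of the two off-diagonal ladder coefficients is $\frac14(\ell+j)(\ell-j+1)$ with a one-step index shift, is exactly what you compute.
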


\begin{proof} Working with the normalized weight-basis as in \cite[\S1]{Koornwinder85} we see that $T^{\ell}(E_{\alpha})$ is the matrix given by
\[
T^{\ell}(E_{\alpha})_{ij}=\delta_{i,i+1}\frac{\ell+i+1}{2}\sqrt{\frac{(\ell-i-2)!(\ell+i+2)!}{(\ell-i-1)!(\ell+i+1)!}}
\]
and $T^{\ell}(E_{\alpha^{-1}})=JT^{\ell}(E_{\alpha})J$. The lemma follows from elementary manipulations. 
\end{proof}

We collect the expressions for the conjugation of the differential operators \eqref{eq: rad second} 
and \eqref{eq: rad first} by the linear map $Y\mapsto Y^{\mathrm{up}}$ where we have used Lemma \ref{lemma: conj wth UP}.

\begin{multline}\label{eq: rad second UP}
 \Pi_{\ell}(\Omega_{1}+\Omega_{2})^{\mathrm{up}}=\frac{1}{16}\left(w\frac{d}{dw}\right)^{2}+\frac{1}{4}\frac{w^{2}+w^{-2}}{w^{2}-w^{-2}}w\frac{d}{dw}+\frac{1}{16}T^{\ell}(H)^{2}+\\
-\frac{4}{(w^{2}-w^{-2})^{2}}\left\{T^{\ell}(E_{\alpha})T^{\ell}(E_{\alpha^{-1}})+T^{\ell}(E_{\alpha^{-1}})T^{\ell}(E_{\alpha})\right\}+\\
2\frac{w^{2}+w^{-2}}{(w^{2}-w^{-2})^{2}}\left\{JC^{\ell}J+C^{\ell}\right\},
\end{multline}

\begin{multline}\label{eq: rad first UP}
 \Pi_{\ell}(\Omega_{1}-\Omega_{2})^{\mathrm{up}}=\frac{1}{8}T^{\ell}(H)w\frac{d}{dw}+\frac{1}{4}\frac{w^{2}+w^{-2}}{w^{2}-w^{-2}}T^{\ell}(H)\\ +\frac{2}{w^{2}-w^{-2}}\left\{JC^{\ell}J-C^{\ell}\right\}.
\end{multline}

The differential operators \eqref{eq: rad second UP} and \eqref{eq: rad first UP} also act on the 
full spherical functions $\Phi_{d}^{\ell,t}$. Collecting the eigenvalues of the columns in 
$\Phi_{d}^{\ell,t}$ in diagonal matrices we obtain the following differential equations:
\begin{eqnarray}
\Pi_{\ell}(\Omega_{1}+\Omega_{2})^{\mathrm{up}}\Phi_{d}&=&\Phi_{d}\Lambda_{d},\\
\Pi_{\ell}(\Omega_{1}-\Omega_{2})^{\mathrm{up}}\Phi_{d}&=&\Phi_{d}\Gamma_{d},
\end{eqnarray}
where $(\Lambda_{d})_{pj}=\frac{1}{4}\delta_{p,j}(d^2+j^2+2d(\ell+1)+\ell(\ell+2))$ and $(\Gamma_{d})_{pj}=\frac{1}{2}\delta_{p,j}j(\ell+d+1)$.
For further reference we write
\begin{eqnarray}
\Pi_{\ell}(\Omega_{1}+\Omega_{2})^{\mathrm{up}}&=&a_{2}(w)\frac{d^{2}}{dw^{2}}+a_{1}(w)\frac{d}{dw}+a_{0}(w),\label{eq: conjugation 2nd first step}\\
\Pi_{\ell}(\Omega_{1}-\Omega_{2})^{\mathrm{up}}&=&b_{1}(w)\frac{d}{dw}+b_{0}(w).\label{eq: conjugation 1st first step}
\end{eqnarray}

\textbf{(5).} Recall from Definition \ref{def: full sf} that the full spherical polynomials 
$Q_{d}^{\ell,t}$ are obtained from the full spherical functions $\Phi_{d}^{\ell,t}$ by the 
description $Q_{d}^{\ell,t}=(\Phi_{0}^{\ell,t})^{-1}\Phi_{d}^{\ell,t}$. We conjugate the 
differential operators \eqref{eq: rad second UP} and \eqref{eq: rad first UP} with $\Phi_{0}$ to 
obtain differential operators to which the polynomials $Q_{d}$ are eigenfunctions. We need a 
technical lemma.    

\begin{lem}\label{lemma:dphinul}
Let $\sigma^{\ell}:\C^{\times}\to\mathrm{End}(\C^{2\ell+1})$ be defined by $\sigma^{\ell}(w)=\ell(w^{2}+w^{-2})I+S^{\ell}$ where $S^{\ell}$ is defined by $\left(S^{\ell}\right)_{p,j}=-(\ell-j)\delta_{p-j,1}-(\ell+j)\delta_{j-p,1}$. Then
\begin{eqnarray}\label{eq: tech lem 1}
\frac{1}{2}w(w^{2}-w^{-2})\frac{d}{dw}\Phi^{\ell,t}_{0}(w)=\Phi^{\ell,t}_{0}(w)\sigma^{\ell}(w).
\end{eqnarray}
Let $\upsilon^{\ell}:\C^{\times}\to\mathrm{End}(\C^{2\ell+1})$ be defined by 
$\upsilon^{\ell}(w)=\frac{1}{8}\frac{w^{3}}{w^{4}-1}\left(\frac{1+w^{4}}
{w^{2}}U^{\ell}_{\mathrm{diag}}+U^{\ell}_{lu}\right)$, where 
$\left(U^{\ell}_{lu}\right)_{i,j}=(-2\ell+2j)\delta_{i,j+1}+(2\ell+2j)\delta_{i+1,j}$ and 
$\left(U^{\ell}_{\mathrm{diag}}\right)_{i,j}=-2i\delta_{ij}$. Then
\begin{eqnarray}\label{eq: tech lem 2}
b_{1}(w)\Phi_{0}^{\ell,t}(a(w))=\Phi_{0}^{\ell,t}(a(w))\upsilon^{\ell}(w).
\end{eqnarray}
\end{lem}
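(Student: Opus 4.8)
The plan is to reduce both matrix identities to scalar differential-recurrence relations for the entries of the explicitly known function $\Phi_0^{\ell,t}(a(w))$, and to verify these from the explicit expression for the degree-zero spherical function. The key structural point is that the matrices on the right are sparse: $\sigma^\ell(w)=\ell(w^2+w^{-2})I+S^\ell$ with $S^\ell$ tridiagonal and zero on the diagonal, while $\upsilon^\ell(w)$ is tridiagonal (its diagonal coming from $U^\ell_{\mathrm{diag}}$ and its off-diagonals from $U^\ell_{lu}$), and $b_1(w)=\tfrac18 w\,T^\ell(H)$ is diagonal. Since the weight basis gives $(b_1)_{ii}=\tfrac14 iw$, each identity becomes, entry by entry, a three-term relation in the column index $p$. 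Concretely, the $(i,p)$-entry of \eqref{eq: tech lem 1} reads
\[
\tfrac12 w(w^2-w^{-2})\tfrac{d}{dw}(\Phi_0^{\ell,t})_{i,p}=\ell(w^2+w^{-2})(\Phi_0^{\ell,t})_{i,p}-(\ell-p)(\Phi_0^{\ell,t})_{i,p+1}-(\ell+p)(\Phi_0^{\ell,t})_{i,p-1},
\]
and the $(i,p)$-entry of \eqref{eq: tech lem 2} is the analogous recurrence expressing $\tfrac14 iw\,(\Phi_0^{\ell,t})_{i,p}$ as a combination of $(\Phi_0^{\ell,t})_{i,p}$ and its two neighbours, with coefficients read off from $\upsilon^\ell$.

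First I would record the explicit entries of $\Phi_0^{\ell,t}(a(w))$. Restricting to $A^\C$ and applying the $(\cdot)^{\mathrm{up}}$ identification, each entry is a diagonal matrix coefficient of $T^{\ell_1,\ell_2}(a(w))$ with $(\ell_1,\ell_2)=\zeta(0,j)$, which on the torus is a Laurent polynomial in $w$ whose coefficients are products of Clebsch--Gordan coefficients; this is precisely the data behind the Fourier expansion of Corollary \ref{cor:Fourier-thmLDUdecompW} and is written out in \cite{KRvP} (going back to Koornwinder \cite{Koornwinder85}). With the entries in hand, the $w$-derivative acts termwise by $w\tfrac{d}{dw}w^{2k}=2k\,w^{2k}$, so both families of scalar relations become identities among the Clebsch--Gordan coefficients after comparing like powers of $w$; these are standard contiguity relations and the verification is direct (and readily confirmed by computer algebra). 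A useful consistency check on the shape: dropping $S^\ell$ in \eqref{eq: tech lem 1} forces each column to be proportional to $(w^2-w^{-2})^\ell$ (one checks directly that $\tfrac12 w(w^2-w^{-2})\tfrac{d}{dw}(w^2-w^{-2})^\ell=\ell(w^2+w^{-2})(w^2-w^{-2})^\ell$), matching the expected $\sin^\ell t$-type prefactor of type-$\ell$ spherical functions.

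A more conceptual route, entirely within the framework of this section, also explains the origin of the two identities. Since $(H,-H)$ translates to $w\tfrac{d}{dw}$, equation \eqref{eq: tech lem 1} encodes the $A$-direction derivative of $\Phi_0$; by the same infinitesimal Cartan decomposition and the Casselman--Mili\v{c}i\'c key identity $(1-\be(a)^{2})X_{\be}=\be(a)(E_{\be}^{a}-\be(a)E_{\be})$ used to derive the radial parts \eqref{eq: rad second}, \eqref{eq: rad first}, this $\mathfrak{p}$-direction derivative can be rewritten in terms of the $\lak$-action matrices $T^\ell(H)$, $T^\ell(E_\al)$, $T^\ell(E_{\al^{-1}})$, which after the $(\cdot)^{\mathrm{up}}$ identification of Lemma \ref{lemma: conj wth UP} assemble into $\sigma^\ell$ (the scalar prefactor $\tfrac12 w(w^2-w^{-2})$ being the $(1-\be(a)^2)$ denominator). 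Identity \eqref{eq: tech lem 2} records similarly how the left multiplication by $T^\ell(H)$ carried by $b_1$ is conjugated through $\Phi_0^{\ell,t}$ into right multiplication by $\upsilon^\ell$.

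I expect the main obstacle to be organisational rather than conceptual: keeping straight the transposes together with the $(\cdot)^{\mathrm{up}}$ identification and the weight-index ranges $-\ell\le i,p\le\ell$, and then isolating the precise contiguous relations for the Clebsch--Gordan coefficients so that the coefficient-by-coefficient comparison closes for both identities simultaneously. Once the explicit entries are fixed, the continuation to $A^\C\cong\C^\times$ is automatic, since every entry is a Laurent polynomial in $w$, so the meromorphic identities hold as stated on all of $\C^\times$.
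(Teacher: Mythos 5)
Your proposal is correct and follows essentially the same route as the paper: the paper's proof also writes out the matrix coefficients of $\Phi_0^{\ell,t}(a(w))$ as explicit Laurent polynomials in $w$ (citing Koornwinder's closed form, which encodes the same Clebsch--Gordan data you invoke) and then verifies both identities by comparing coefficients of powers of $w$ entry by entry. Your reduction to three-term recurrences in the column index, and your reading of $b_1$ and of the tridiagonal structure of $\sigma^\ell$ and $\upsilon^\ell$, match what the paper does implicitly.
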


\begin{proof}
The matrix coefficients of $\Phi_{0}^{\ell,t}(a(w))$ are given by
\begin{multline}
(\Phi_{0}^{\ell,t}(a(w)))_{p,j}=\frac{(\ell-j)!(\ell+j)!(\ell-p)!(\ell+p)!}{(2\ell)!}\times\\
\sum_{r=\max(0,-p-j)}^{\min(\ell-p,\ell-j)}\frac{w^{4r-2\ell+2p+2j}}{r!(\ell-p-r)!(\ell-j-r)!(p+j+r)!},
\end{multline}
see \cite[Prop.~3.2]{Koornwinder85}. 
The matrix-valued function $b_{1}(w)$ is equal to the constant matrix
$\frac{1}{8}T^{\ell}$ where $T^{\ell}(H)_{ij}=2\delta_{ij}j$.
We can now express the matrix coefficients of the matrices in \eqref{eq: tech lem 1} and 
\eqref{eq: tech lem 2} in Laurent polynomials in the variable $w$ and comparing coefficients of 
these polynomials shows that the equalities hold.
\end{proof}

\begin{defin}
Define $\Omega_{\ell}=(\Phi_{0}^{\ell,t})^{-1}\circ\Pi_{\ell}
(\Omega)^{\mathrm{up}}\circ\Phi_{0}^{\ell,t}$ and 
$\Delta_{\ell}=(\Phi_{0}^{\ell,t})^{-1}\circ\Pi_{\ell}
(\Omega')^{\mathrm{up}}\circ\Phi_{0}^{\ell,t}$.
\end{defin}

\begin{thm}
The differential operators $\Omega_{\ell}$ and $\Delta_{\ell}$ are given by
\begin{gather} 
\Omega_{\ell}=\frac{1}{16}\left(w\frac{d}{dw}\right)^{2}+\frac{1}{4}\left\{(\ell+1)(w^{2}+w^{-2})+S^{\ell}\right\}\frac{w}{w^{2}-w^{-2}}\frac{d}{dw}+\Lambda_{0}\label{eq: 2nd up w}\\
\Delta_{\ell}=\upsilon^{\ell}(w)\frac{d}{dw}+\Gamma_{0}\label{eq: 1st up w}
\end{gather}
\end{thm}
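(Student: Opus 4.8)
The plan is to compute both conjugations directly from the conjugation rule for a left-acting matrix-valued differential operator, drawing the first-order coefficients from the two identities of Lemma \ref{lemma:dphinul} and the constant terms from the $d=0$ eigenvalue equations. Writing $M=\Phi_{0}^{\ell,t}$ and $D=a_{2}\frac{d^2}{dw^2}+a_{1}\frac{d}{dw}+a_{0}$, applying $M^{-1}\circ D\circ M$ to a function $F$ and using $(MF)'=M'F+MF'$ and $(MF)''=M''F+2M'F'+MF''$ gives
\begin{equation*}
M^{-1}\circ D\circ M = M^{-1}a_{2}M\,\frac{d^2}{dw^2} + M^{-1}(2a_{2}M'+a_{1}M)\,\frac{d}{dw} + M^{-1}(a_{2}M''+a_{1}M'+a_{0}M).
\end{equation*}
From \eqref{eq: rad second UP} and \eqref{eq: conjugation 2nd first step} the coefficients $a_{2}=\tfrac{1}{16}w^{2}$ and $a_{1}=\tfrac{1}{16}w+\tfrac14\frac{w(w^{2}+w^{-2})}{w^{2}-w^{-2}}$ are scalar, so $M^{-1}a_{2}M=\tfrac{1}{16}w^{2}$; this together with the $\tfrac1{16}w$ coming from expanding $\tfrac1{16}(w\frac{d}{dw})^{2}$ reassembles the leading term $\tfrac1{16}(w\frac{d}{dw})^{2}$ in \eqref{eq: 2nd up w}.

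Next I would treat the first-order coefficient of $\Omega_{\ell}$. Since $a_{2}$ is scalar, $M^{-1}(2a_{2}M'+a_{1}M)=2a_{2}\,M^{-1}M'+a_{1}$, and \eqref{eq: tech lem 1} supplies $M^{-1}M'=\frac{2}{w(w^{2}-w^{-2})}\sigma^{\ell}$ with $\sigma^{\ell}=\ell(w^{2}+w^{-2})I+S^{\ell}$. Substituting, the scalar prefactor collapses to $\tfrac14\frac{w}{w^{2}-w^{-2}}$, and combining the $\ell(w^{2}+w^{-2})$ contribution of $\sigma^{\ell}$ with the $(w^{2}+w^{-2})$ term of $a_{1}$ produces $(\ell+1)(w^{2}+w^{-2})I+S^{\ell}$, which is exactly the first-order coefficient in \eqref{eq: 2nd up w}. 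The same mechanism settles $\Delta_{\ell}$: its first-order coefficient is $M^{-1}b_{1}M$, and \eqref{eq: tech lem 2} states precisely that $b_{1}M=M\upsilon^{\ell}$, whence $M^{-1}b_{1}M=\upsilon^{\ell}(w)$, matching \eqref{eq: 1st up w}.

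For the constant terms I would invoke the eigenvalue equations $\Pi_{\ell}(\Omega)^{\mathrm{up}}\Phi_{d}=\Phi_{d}\Lambda_{d}$ and $\Pi_{\ell}(\Omega')^{\mathrm{up}}\Phi_{d}=\Phi_{d}\Gamma_{d}$ specialised to $d=0$. These read $a_{2}M''+a_{1}M'+a_{0}M=M\Lambda_{0}$ and $b_{1}M'+b_{0}M=M\Gamma_{0}$, so the conjugated constant coefficients are $M^{-1}(a_{2}M''+a_{1}M'+a_{0}M)=\Lambda_{0}$ and $M^{-1}(b_{1}M'+b_{0}M)=\Gamma_{0}$, with no further computation. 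This is the conceptual point of the construction: conjugation by the invertible $\Phi_{0}^{\ell,t}$ is arranged so that the zeroth-order term is forced to be the eigenvalue of the degree-zero spherical function, and in particular $a_{0}$ and $b_{0}$ never need to be written out explicitly.

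The main obstacle is the bookkeeping in the first-order coefficient of $\Omega_{\ell}$: one must check that the noncommuting matrix $S^{\ell}$ entering through $\sigma^{\ell}$ in \eqref{eq: tech lem 1} appears only under the scalar multiple $\tfrac14\frac{w}{w^{2}-w^{-2}}$, and that the $\ell$ from $\sigma^{\ell}$ adds to the $+1$ carried by $a_{1}$ to give $(\ell+1)$. Everything else is either scalar arithmetic or is delivered verbatim by Lemma \ref{lemma:dphinul}, so no genuinely new identity or estimate beyond that lemma — which already absorbs the only nontrivial computation involving $\Phi_{0}^{\ell,t}$ — is required.
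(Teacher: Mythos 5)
Your proof is correct and follows essentially the same route as the paper: conjugate the radial parts by $\Phi_0^{\ell,t}$ via the standard formula for $M^{-1}\circ D\circ M$ and let Lemma \ref{lemma:dphinul} supply the two nontrivial first-order identities, after which the scalar bookkeeping reassembles $\tfrac1{16}(w\tfrac{d}{dw})^2$ and produces the $(\ell+1)(w^2+w^{-2})+S^\ell$ coefficient. Your observation that the zeroth-order coefficient is forced to equal $\Lambda_0$ (resp.\ $\Gamma_0$) by the $d=0$ eigenvalue equation, so that $a_0$ and $b_0$ never need to be written out, is a clean way of organising the part of the computation the paper leaves implicit.
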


\begin{proof}
This is a straightforward calculation using the expressions \eqref{eq: conjugation 2nd first step} 
and \eqref{eq: conjugation 1st first step}, bearing in mind that the coefficients are 
matrix-valued. In both calculations the difficult parts are taken care of by Lemma \ref{lemma:dphinul}.
\end{proof}

\textbf{(6).} The elementary zonal spherical function $\Phi^{\frac{1}{2},\frac{1}{2}}_{0}$ is 
denoted by $\phi$ and we have $\phi(a(w))=\frac{1}{2}(w^{2}+w^{-2})$. In this final step we note 
that the differential operators $\Omega_{\ell}$ and $\Delta_{\ell}$ are invariant under the maps 
$w\mapsto -w$ and $w\mapsto w^{-1}$. This shows that the differential operators can be pushed 
forward by $\phi\circ a$ to obtain differential operators on $\C$ in a coordinate $z=\phi(a(w))$. 
Using the identities $w\frac{d}{dw}(h\circ\phi)(w)=(w^{2}-w^{-2})h'(\phi(w))$, $(w\frac{d}{dw})^{2}
(h\circ\phi)(w)=(w^{2}-w^{-2})^{2}h''(\phi(a(w)))+2(w^{2}+w^{-2})h'(\phi(a(w)))$ and $(w^{2}-
w^{-2})^{2}=4(\phi(a(w))^{2}-1)$ we transform 
\eqref{eq: 2nd up w} and \eqref{eq: 1st up w} into
\begin{gather}
\widetilde{\Omega_{\ell}}=\frac{1}{4}(z^{2}-1)\left(\frac{d}{dz}\right)^{2}+\frac{1}{4}\left\{(2\ell+3)z+S^{\ell}\right\}\frac{d}{dz}+\Lambda_{0}\label{eq: 2nd up z}\\
\widetilde{\Delta_{\ell}}=\frac{1}{8}\left(2zU^{\ell}_{\mathrm{diag}}+U^{\ell}_{ul}\right)\frac{d}{dz}+\Gamma_{0}\label{eq: 1st up z}
\end{gather}

Recall that the $\mathrm{End}(\C^{2\ell+1})$-valued polynomials $R_{d}^{\ell,t}$ are defined by 
pushing forward the $\mathrm{End}(\C^{2\ell+1})$-valued functions $Q_{d}^{\ell,t}$ over $\phi\circ 
a$, see Definition \ref{def: sf pol in x}.

\begin{thm}\label{thm:DOfromgrouptoDE} The members of the family $\{R_{d}^{\ell,t}\}_{d\ge0}$ 
of $\mathrm{End}(\C^{2\ell+1})$-valued polynomials of degree $d$ are eigenfunctions of the 
differential operators $\widetilde{\Omega_{\ell}}$ and $\widetilde{\Delta_{\ell}}$ with eigenvalues 
$\Lambda_{d}$ and $\Gamma_{d}$ respecively. The transposed differential operators 
$(\widetilde{\Omega_{\ell}})^{t}$ and $(\widetilde{\Delta_{\ell}})^{t}$ satisfy
\begin{gather*}
-4(\widetilde{\Omega_{\ell}})^{t}+2(\ell^{2}+\ell)=\tilde{D},\\
-\frac{2}{\ell}(\widetilde{\Delta_{\ell}})^{t}-(\ell+1)=\tilde{E},
\end{gather*}
where $\tilde{D}$ and $\tilde{E}$ are defined in Theorem \ref{thm:differentialoperatorsP}.
\end{thm}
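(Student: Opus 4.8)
The plan is to prove the two assertions of Theorem \ref{thm:DOfromgrouptoDE} separately. The eigenfunction statement requires essentially no new computation: it is obtained by transporting the eigenvalue equations already recorded in Step (4) through the conjugation by $\Phi_{0}^{\ell,t}$ and the change of variable of Step (6). The operator identity is then a finite, explicit matrix calculation comparing \eqref{eq: 2nd up z} and \eqref{eq: 1st up z} with the coefficients of $\tilde{D}$ and $\tilde{E}$ listed in Theorem \ref{thm:differentialoperatorsP}.

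For the eigenfunction claim I would start from the two equations $\Pi_{\ell}(\Omega_{1}+\Omega_{2})^{\mathrm{up}}\Phi_{d}=\Phi_{d}\Lambda_{d}$ and $\Pi_{\ell}(\Omega_{1}-\Omega_{2})^{\mathrm{up}}\Phi_{d}=\Phi_{d}\Gamma_{d}$ established in Step (4), whose diagonal eigenvalue matrices $\Lambda_{d},\Gamma_{d}$ are read off from the restricted-spherical-function eigenvalues together with Definition \ref{def: full sf}. Since $\Omega_{\ell}$ and $\Delta_{\ell}$ are by definition the conjugates of $\Pi_{\ell}(\Omega)^{\mathrm{up}}$ and $\Pi_{\ell}(\Omega')^{\mathrm{up}}$ by $\Phi_{0}^{\ell,t}$, and $Q_{d}^{\ell,t}=(\Phi_{0}^{\ell,t})^{-1}\Phi_{d}^{\ell,t}$, conjugation gives at once $\Omega_{\ell}Q_{d}^{\ell,t}=Q_{d}^{\ell,t}\Lambda_{d}$ and $\Delta_{\ell}Q_{d}^{\ell,t}=Q_{d}^{\ell,t}\Gamma_{d}$; the eigenvalue matrices are unchanged because they multiply from the right. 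Pushing forward along $\phi\circ a$ as in Step (6), which sends $Q_{d}^{\ell,t}$ to $R_{d}^{\ell,t}$ and $\Omega_{\ell},\Delta_{\ell}$ to $\widetilde{\Omega_{\ell}},\widetilde{\Delta_{\ell}}$, then yields the asserted eigenfunction equations with the same $\Lambda_{d},\Gamma_{d}$.

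For the operator identity I would first transpose. Writing a left-acting operator as $\sum_{i}F_{i}(z)\,d^{i}/dz^{i}$, its transpose in the right-action convention of Theorem \ref{thm:differentialoperatorsP} is $\sum_{i}(d^{i}/dz^{i})\,F_{i}(z)^{t}$: scalar coefficients are untouched and only the matrix coefficients $S^{\ell}$, $U^{\ell}_{\mathrm{diag}}$, $U^{\ell}_{ul}$, $\Lambda_{0}$, $\Gamma_{0}$ are transposed (the last two being diagonal, hence fixed). I would then apply the affine maps $-4(\cdot)+2(\ell^{2}+\ell)$ and $-\tfrac{2}{\ell}(\cdot)-(\ell+1)$, identify the variable $x=z$, and relabel the weight basis $\{-\ell,\dots,\ell\}$ as $\{0,\dots,2\ell\}$ via $i=q+\ell$ as dictated by \eqref{eq:def_polynomials_Pd}. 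The matching is then term by term. For $\widetilde{\Omega_{\ell}}$ the scalar leading coefficient $\tfrac14(z^{2}-1)$ becomes $(1-x^{2})$; the first-order matrix $-(S^{\ell})^{t}$ reproduces $\tilde{C}$ while the scalar part produces $\tilde{U}=(2\ell+3)I$; and $-4\Lambda_{0}+2(\ell^{2}+\ell)$ gives $-\tilde{V}=\sum_{i}i(2\ell-i)E_{ii}$. The analogous reductions for $\widetilde{\Delta_{\ell}}$ turn $(U^{\ell}_{\mathrm{diag}})^{t}$ and $(U^{\ell}_{ul})^{t}$ into $\tilde{B}_{1}$ and $\tilde{B}_{0}$, and $\Gamma_{0}$ into $\tilde{A}$. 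A convenient independent check, which also pins down the affine constants, is that these same maps should carry the diagonal matrices $\Lambda_{d}$ and $\Gamma_{d}$ to the eigenvalue matrices $\La_{d}(\tilde{D})$ and $\La_{d}(\tilde{E})$ of Theorem \ref{thm:differentialoperatorsP}.

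The main obstacle is entirely the bookkeeping of conventions, and it is concentrated in the first-order operator. One must keep three passages consistent simultaneously: the transpose interacting with the $(\cdot)^{\mathrm{up}}$ identification of $\mathrm{End}_{M}(H^{\ell})$ with $\C^{2\ell+1}$ used in Lemma \ref{lemma: conj wth UP}, the weight-basis relabeling coming from \eqref{eq:def_polynomials_Pd}, and the exact affine normalization constants. For the second-order part these combine cleanly, so that $-4(\widetilde{\Omega_{\ell}})^{t}+2(\ell^{2}+\ell)=\tilde{D}$ falls out directly. For $\widetilde{\Delta_{\ell}}$, however, the scalar prefactors generated by the change of variable in Step (6) together with the factor $1/\ell$ in the normalization must be tracked with real care in order to land exactly on $\tilde{E}$; this is the one place where a stray factor of two is easy to introduce, and I would confirm it explicitly against the eigenvalue cross-check above before considering the identity $-\tfrac{2}{\ell}(\widetilde{\Delta_{\ell}})^{t}-(\ell+1)=\tilde{E}$ settled.
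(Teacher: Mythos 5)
Your proposal is correct and follows essentially the same route as the paper: the eigenfunction statement is immediate from the construction in Steps (4)--(6), and the operator identities are verified by transposing, comparing coefficients term by term, and keeping track of the relabeling of the weight basis and the affine normalization constants. Your write-up is in fact more explicit than the paper's proof, which only remarks that the equalities ``follow easily upon comparing coefficients where one has to bear in mind the different labeling of the matrices.''
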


\begin{proof}
The only things that need proofs are the equalities of the differential operators. These follow 
easily upon comparing coefficients where one has to bear in mind the different labeling of the 
matrices involved in the two cases.
\end{proof}
Note that the differential operators $\tilde{D}$ and $\widetilde{\Omega_{\ell}}$ are invariant 
under conjugation by the matrix $J$, where $J_{i,j}=\delta_{i,-j}$. The differential operator 
$\widetilde{\Delta_{\ell}}$ is anti-invariant for this conjugation. The differential operator 
$\tilde{E}$ does not have this nice property.

\appendix

\section{Proof of Theorem \ref{thm:LDUdecompW}}\label{app:proofthmLDUdecompW}

The purpose of this appendix is to prove the LDU-decomposition of Theorem \ref{thm:LDUdecompW}. We 
prove instead the equivalent Proposition \ref{prop:thmLDUdecompW}, and we start with proving Lemma 
\ref{lem:Racah-thmLDUdecompW}. 

Note that the integral in Lemma \ref{lem:Racah-thmLDUdecompW} is zero 
by \eqref{eq:orthorelGegenbauerpols} in case $t>m$, since 
$C^{(k+1)}_{m-k}(x) U_{n+m-2t}(x)$ is a polynomial of degree $n+2m-k-2t<n-k$.

We start by proving Lemma \ref{lem:Racah-thmLDUdecompW} in the remaining case for which we use the 
following well-known formulas for connection and linearisation formulas of Gegenbauer polynomials, 
see e.g. \cite[Thm.~6.8.2]{AndrAR}, \cite[Thm.~9.2.1]{Isma};
\begin{equation}\label{eq:formulasGegenbauer}
\begin{split}
&C^{(\ga)}_n(x) = \sum_{k=0}^{\lfloor n/2\rfloor} 
\frac{(\ga-\be)_k (\ga)_{n-k}} {k!\, (\be+1)_{n-k}} 
\left( \frac{\be+n-2k}{\be}\right) C^{(\be)}_{n-2k}(x), \\
&C^{(\al)}_n(x)C^{(\al)}_m(x) = \sum_{k=0}^{m\wedge n}
\frac{(n+m-2k+\al) (n+m-2k)! (\al)_k}{(n+m-k+\al) k!} \\
&\qquad\qquad\qquad\qquad \times
\frac{ (\al)_{n-k} (\al)_{m-k} (2\al)_{n+m-k}}
{(n-k)! (m-k)! (\al)_{n+m-k} (2\al)_{n+m-2k}} 
C^{(\al)}_{n+m-2k}(x)
\end{split}
\end{equation}

\begin{proof}[Proof of Lemma \ref{lem:Racah-thmLDUdecompW}] We indicate the proof of Lemma 
\ref{lem:Racah-thmLDUdecompW}, so that the reader can easily fill in the details. Calculating the 
product of two Gegenbauer polynomials as a sum using the linearisation formula of 
\eqref{eq:formulasGegenbauer} and expanding the Chebyshev polynomial $U_{n+m-2t}(x) = 
C^{(1)}_{n+m-2t}(x)$ in terms of Chebyshev polynomials with parameter $k+1$ using the linearisation 
formula of \eqref{eq:formulasGegenbauer}, we can rewrite the integral as a double sum with an 
integral of Chebyshev polynomials that can be evaluated using the orthogonality relations 
\eqref{eq:orthorelGegenbauerpols} reducing the integral of Lemma \ref{lem:Racah-thmLDUdecompW} to 
the single sum 
\begin{gather*}
\sum_{r= \max(0,t-k)}^{\min(t,m-k)}  
\frac{(m+n-k+1-2r)}{(m+n-k+1-r)} 
\frac{(k+1)_r (k+1)_{n-k-r} (k+1)_{m-k-r}  }
{r!\, (m-k-r)!\, (n-k-r)!\, } \\ \times
\frac{(2k+2)_{m+n-2k-r} (-k)_{k+r-t} (n+m-t-k-r)!}{(k+1)_{m+n-2k-r}\, (k-t+r)!\, (k+2)_{n+m-t-k-r}} 
\frac{\sqrt{\pi}\,  \Ga(k+\frac32)}{(k+1)\, \Ga(k+1)}.
\end{gather*}
Assuming for the moment that $k\geq t$, so the sum is $\sum_{r=0}^{\min(t,m-k)}$. Then this sum can 
be written as a very-well-poised ${}_7F_6$-series
\begin{gather*}
\frac{\sqrt{\pi}\, \Ga(k+\frac32)}{(k+1)\, \Ga(k+1)}
\frac{(k+1)_{m-k}}{(m-k)!} \frac{(k+1)_{n-k}}{(n-k)!}  
\frac{(2k+2)_{m+n-2k}}{(k+1)_{m+n-2k}} \frac{(-k)_{k-t}}{(k-t)!} 
\frac{(n+m-t-k)!}{(k+2)_{n+m-t-k}} \\
\rFs{7}{6}{\frac12(k-m-n+1), k+1, k-m, k-n,k-m-n-1, -t, t-m-n-1}
{\frac12(k-m-n-1), -m, -n, -m-n-1, k-t+1, -n-m+k+t}{1}
\end{gather*}
Using Whipple's transformation \cite[Thm.~3.4.4]{AndrAR}, \cite[\S 4.3]{Bail} of a very-well-poised 
${}_7F_6$-series to a balanced ${}_4F_3$-series, we find that the ${}_7F_6$-series can be written 
as 
\[
\frac{(k-m-n)_t\, (-t)_t}{(k-t+1)_t\, (-m-n-1)_t}
\rFs{4}{3}{-k, k+1, -t, t-m-n-1}{-n,\, -m,\, 1}{1}.
\]
Simplifying the shifted factorials and recalling the definition of the Racah polynomials 
\eqref{eq:defRacahpols} in terms of a balanced ${}_4F_3$-series gives the result in case $k\geq t$. 

In case $k\leq t$ we have to relabel the sum, which turns out again to be a very-well-poised 
${}_7F_6$-series which can be transformed to a balanced ${}_4F_3$-series. The resulting balanced 
${}_4F_3$-series is not a Racah polynomial as in the statement of Lemma 
\ref{lem:Racah-thmLDUdecompW}, 
but it can be transformed to a Racah polynomial using Whipple's transformation for 
balanced ${}_4F_3$-series \cite[Thm.~3.3.3]{AndrAR}. Keeping track of the constants proves Lemma 
\ref{lem:Racah-thmLDUdecompW} in this case. 
\end{proof}

As remarked in Section \ref{sec:LDU-weight}, Theorem \ref{thm:LDUdecompW} follows from Proposition 
\ref{prop:thmLDUdecompW}. In order to prove Proposition \ref{prop:thmLDUdecompW} we assume 
$\al_t(m,n)$ to be known \eqref{eq:defmatrix_W} and to find $\be_k(m,n)$.  Given the explicit 
$\al_t(m,n)$, and  
multiplying by $\sqrt{1-x^2}\, U_{n+m-2t}(x)$ and integrating 
we find from Lemma \ref{lem:Racah-thmLDUdecompW} 
\begin{equation}\label{eq:pfThmLDUW-result1}
\al_t(m,n) \frac{\pi}{2} = 
 \sum_{k=0}^m \be_k(m,n) C_k(m,n) R_k(\la(t);0,0,-m-1,-n-1)
\end{equation}
where 
\[
C_k(m,n) = \frac{\sqrt{\pi}\, \Ga(k+\frac32)}{(k+1)}
\frac{(k+1)_{m-k}}{(m-k)!} \frac{(k+1)_{n-k}}{(n-k)!}  
\frac{(-1)^{k}\, (2k+2)_{m+n-2k}\, (k+1)!}{(n+m+1)!}.
\]
Using the orthogonality relations for the Racah polynomials, see \cite[p.~344]{AndrAR}, \cite[\S 1.2]{KoekS}, 
\begin{equation*}
\begin{split}
&\sum_{t=0}^m (m+n+1-2t)\, R_k(\la(t);0,0,-m-1,-n-1) \, R_l(\la(t);0,0,-m-1,-n-1)  \\
&\qquad\qquad= \de_{k,l} \frac{(n+1)(m+1)}{(2k+1)} \frac{(m+2)_k (n+2)_k}{(-m)_k (-n)_k}
\end{split}
\end{equation*}
we find the following explicit expression for $\be_k(m,n)$ 
\begin{equation}\label{eq:pfThmLDUW-result2}
\begin{split}
\be_k(m,n) = & \frac{1}{C_k(m,n)} \frac{(2k+1)}{(n+1)(m+1)} \frac{(-m)_k (-n)_k}{(m+2)_k (n+2)_k}\\ & \times \sum_{t=0}^m (m+n+1-2t) R_k(\la(t);0,0,-m-1,-n-1) \al_t(m,n) \frac{\pi}{2} 
\end{split} 
\end{equation} 
Now Proposition \ref{prop:thmLDUdecompW}, and hence Theorem \ref{thm:LDUdecompW}, follows from the 
following summation and simplifying the result. 

\begin{lem}\label{lem:sumRacah} For $\ell\in\frac12\N$,  $n,m,k\in\N$ with 
$0\leq k\leq m\leq n$ we have
\begin{gather*}
 \sum_{t=0}^m (-1)^{t} \frac{(n-2\ell)_{m-t}}{(n+2)_{m-t}}
\frac{(2\ell+2-t)_t}{t!}
 (m+n+1-2t) R_k(\la(t);0,0,-m-1,-n-1) \\
 =  
(-1)^{m+k} \frac{(2\ell+k+1)!\ (2\ell-k)!}{(2\ell+1)!} 
\frac{(n+1)}{m!\, (2\ell -m)!}
\end{gather*}
\end{lem}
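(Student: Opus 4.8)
The plan is to reduce the left-hand side to a double hypergeometric sum and evaluate it by two successive classical summations. First I would insert the ${}_4F_3$-representation of the Racah polynomial from \eqref{eq:defRacahpols}, namely $R_k(\la(t);0,0,-m-1,-n-1)=\rFs{4}{3}{-k,k+1,-t,t-m-n-1}{1,-n,-m}{1}$, and simplify the $t$-dependent weight. Using the reflection formula $(a)_{m-t}=(-1)^t(a)_m/(1-a-m)_t$ twice, together with $(2\ell+2-t)_t=(-1)^t(-2\ell-1)_t$, the prefactor $(-1)^t\frac{(n-2\ell)_{m-t}}{(n+2)_{m-t}}\frac{(2\ell+2-t)_t}{t!}$ collapses to $\frac{(n-2\ell)_m}{(n+2)_m}\frac{(-m-n-1)_t(-2\ell-1)_t}{(2\ell-n-m+1)_t\,t!}$, the two signs cancelling. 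I would record that this is a formal Pochhammer identity, valid for generic parameters and then specialised by continuity, so that the apparent poles of $1/(2\ell-n-m+1)_t$ are harmless.

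Next I would interchange the finite $t$- and $j$-summations. The decisive observation is that $m+n+1-2t$ equals $-((-m-n-1)+2t)$, so that, with $a:=-m-n-1$, the combination $(m+n+1-2t)(a)_t$ is exactly the very-well-poised signature; moreover $(a)_t(t-m-n-1)_j=(a)_{t+j}$, which fuses the $t$- and $j$-Pochhammers. Substituting $t=j+s$ then turns the inner sum over $t$ into a terminating very-well-poised series
\[
\Sigma_j \, = \, \sum_{s}(a'+2s)\frac{(a')_s\,(b')_s}{(c')_s\,s!}, \qquad a'=-m-n-1+2j,\ b'=-2\ell-1+j,\ c'=2\ell-n-m+1+j,
\]
in which one checks $b'+c'=a'+1$, i.e. $c'=1+a'-b'$. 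This is the ${}_3F_2$-level of a Dougall-type evaluation; I would sum it by the relevant case of Whipple's transformation \cite[Thm.~3.4.4]{AndrAR} (equivalently, as a degenerate limit of Dougall's ${}_7F_6$-theorem), being careful that the sum is truncated at the upper endpoint coming from $0\le t\le m$ rather than at natural termination, so that the closed form is the single boundary term and not the full (vanishing) sum. This explicit summation of the truncated very-well-poised series is where the real work lies and is the main obstacle.

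With $\Sigma_j$ evaluated as a ratio of Pochhammer symbols, the remaining outer sum over $j$ should be a balanced series at argument $1$: I would collect the $j$-dependent factors coming from $\frac{(-k)_j(k+1)_j}{(1)_j(-n)_j(-m)_j\,j!}$ and from the closed form of $\Sigma_j$, recognise a Saalschützian ${}_3F_2$, and sum it by the Pfaff--Saalschütz theorem \cite{AndrAR}. Finally I would reassemble the constant $\frac{(n-2\ell)_m}{(n+2)_m}$ and simplify the resulting product of shifted factorials, using $(2\ell-n-m+1)_m=(-1)^m(n-2\ell)_m$ and similar reflections, until it matches $(-1)^{m+k}\frac{(2\ell+k+1)!(2\ell-k)!}{(2\ell+1)!}\frac{n+1}{m!(2\ell-m)!}$.

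An alternative route I would keep in reserve avoids the truncated series: since, by the orthogonality relations for the Racah polynomials recorded just before \eqref{eq:pfThmLDUW-result2}, the measure is simply $m+n+1-2t$, the sum equals $\frac{(n-2\ell)_m}{(n+2)_m}\,c_k\,h_k$, where $c_k$ is the coefficient of $R_k$ in the Racah-expansion of the hypergeometric term $\frac{(-m-n-1)_t(-2\ell-1)_t}{(2\ell-n-m+1)_t\,t!}$ and $h_k$ is the squared norm. Determining $c_k$ in closed form (again a very-well-poised summation) and multiplying by the explicit $h_k$ would yield the same answer and provides a consistency check; in particular the case $k=0$ already pins down the value of the truncated very-well-poised sum and can be verified independently against the right-hand side $(-1)^m(n+1)\binom{2\ell}{m}$.
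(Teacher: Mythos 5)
Your proposal follows essentially the same route as the paper's proof: insert the balanced ${}_4F_3$ for the Racah polynomial, interchange the sums, shift the inner index by $j$, evaluate the resulting truncated very-well-poised inner sum in closed form, and sum the remaining Saalsch\"utzian $j$-series by Pfaff--Saalsch\"utz. The only difference is cosmetic and occurs at the inner sum, which the paper pads with the trivial pairs $(j-m)_p/(j-m)_p$ and $(j-n)_p/(j-n)_p$ so as to apply the terminating Rogers--Dougall ${}_5F_6$-type summation, while you correctly evaluate the same truncated series as the single boundary term of a telescoping sum (your passing citation of Whipple's ${}_7F_6\to{}_4F_3$ transformation is the one imprecision --- that transformation is what is used in Lemma \ref{lem:Racah-thmLDUdecompW}, not here --- but your ``Dougall-type/boundary-term'' description of the step is right).
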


\begin{proof} Start with the left hand side and insert the ${}_4F_3$-series for the 
Racah polynomial and interchange summations to find
\begin{gather*}
\sum_{j=0}^k \frac{(-k)_j\, (k+1)_j}{j!\, j!\, (-m)_j \, (-n)_j} 
\frac{(n-2\ell)_{m}}{(n+2)_{m}}  \sum_{t=j}^m  (-1)^{t} \frac{(-1-n-m)_{t}}{(2\ell-n-m+1)_{t}}\\
\times 
\frac{(-2\ell-1)_t}{t!} (-1)^t
 (m+n+1-2t) (-t)_j (t-m-n-1)_j 
\end{gather*}
Relabeling the inner sum  $t=j+p$ shows that the inner sum equals
\begin{multline*}
 (-1)^j \frac{(-1-n-m)_{2j}}{(2\ell-n-m+1)_{j}}
(-2\ell-1)_{j} (1+m+n-2j)  \\ \times 
\sum_{p=0}^{m-j}  \frac{(-1-n-m+j)_{p}}{(2\ell-n-m+1+j)_{p}}
\frac{(-2\ell-1+j)_{p}}{p!} \\ 
\times\frac{(1+\frac12(-1-m-n+2j))_p}{(\frac12(-1-m-n+2j))_p}
 \frac{(-1-m-n+2j)_p}{(-1-m-n+j)_p} 
\end{multline*}
and the sum over $p$ is a hypergeometric sum. Multiplying by $\frac{(j-m)_p (j-n)_p}{(j-m)_p (j-n)_p}$
the sum can be written as a very-well-poised ${}_5F_4$-series 
\begin{gather*}
\rFs{5}{4}{1+\frac12(-1-m-n+2j),\, -1-m-n+2j,\, -1-2\ell+j,\, j-m,\, j-n}
{\frac12(-1-m-n+2j), 2\ell-n-m+j+1,\, j-n, \, j-m}{1} \\
 = \frac{(-m-n+2j)_{m-j}}{(-m-n+j+1+2\ell)_{m-j}}
\frac{(-m+1+2\ell)_{m-j}}{(-m+j)_{m-j}}
\end{gather*}
by the terminating Rogers-Dougall summation formula \cite[\S 4.4]{Bail}.

Simplifying shows that the left hand side of the lemma is equal to the single sum 
\begin{multline*}
\frac{(n-2\ell)_{m}}{(n+2)_{m}} (-1)^m (n+m+1) \frac{(-n-m)_m}{(2\ell-n-m+1)_m}
\frac{(2\ell+1-m)_m}{m!} \\ \times \sum_{j=0}^k \frac{(-k)_j\, (k+1)_j}{j!\, j!\,} 
\frac{(-2\ell-1)_j}{(-2\ell)_j} 
\end{multline*}
which can be summed by the Pfaff-Saalsch\"utz summation \cite[Thm.~2.2.6]{AndrAR}, \cite[(1.4.5)]{Isma}. This proves the lemma after some simplifications.
\end{proof}

\section{Moments}\label{app:moments}

In this appendix we give an explicit sum for the generalised moments for $W$. 
By the explicit expression 
\[
U_r(x) \, = \, (r+1) \, \rFs{2}{1}{-r, \, r+2}{\frac32}{\frac{1-x}{2}}
\]
we find 
\begin{equation*}
\begin{split}
&\int_{-1}^1 (1-x)^n U_r(x) \sqrt{1-x^2}\, dx \,\\  =\, & 
(r+1) \sum_{k=0}^r \frac{(-r)_k (r+2)_k}{k!\, (\frac32)_k} 2^{-k} 
2^{n+k+2} \frac{\Ga(n+k+\frac32)\, \Ga(\frac32)}{\Ga(n+k+3)} \\
=\, & (r+1) 2^{n+2} \frac{\Ga(n+\frac32)\, \Ga(\frac32)}{\Ga(n+3)} 
\rFs{3}{2}{-r, r+2, n+\frac32}{\frac32,\, n+3}{1} 
 \\ =\, & 
= (r+1) 2^{n+2} \frac{\Ga(n+\frac32)\, \Ga(\frac32)}{\Ga(n+3)} \frac{(-n)_r}{(n+3)_r}
\end{split}
\end{equation*}
using the beta-integral in the first equality and the Pfaff-Saalsch\"utz summation \cite[Thm.~2.2.6]{AndrAR}, \cite[(1.4.5)]{Isma} in the last equality.
For $m\leq n$, the explicit expression \eqref{eq:defmatrix_W} gives the following generalised moments
\begin{equation}
\begin{split}
&\int_{-1}^1 (1-x)^p W(x)_{nm}\, dx = 
2^{p+2} \frac{\Ga(p+\frac32)\, \Ga(\frac32)}{\Ga(p+3)} 
\frac{(2\ell+1)}{n+1}\frac{(2\ell-m)!m!}{(2\ell)!} \\
& \qquad\times \sum_{t=0}^m (-1)^{m-t} \frac{(n-2\ell)_{m-t}}{(n+2)_{m-t}}
\frac{(2\ell+2-t)_t}{t!} (n+m-2t+1)
\frac{(-p)_{n+m-2t}}{(p+3)_{n+m-2t}}
\end{split}
\end{equation}

\subsection*{Acknowledgement.} We thank Juan A. Tirao for his suggestion on how to obtain the first 
order differential operator from the Casimir operators in Section 
\ref{sec:grouptheoreticderivation}. The work by Zurri\'an mentioned in the Introduction is now available in 
\cite{PachTZ}.
We also thank Erik van den Ban for pointing out to one of us 
(MvP) the paper \cite{CM1982} and for explaining the result of this paper. We also thank Michel Brion for 
his help in the formulation of the results of Section \ref{sec:grouptheoreticderivation}. 
The work of Pablo 
Rom\'an on this paper was done while employed by the Katholieke Universiteit Leuven, Belgium, 
through grant OT/08/33 of the KU Leuven and grant P06/02 of the Belgian Interuniversity Attraction 
Pole. Pablo Rom\'an thanks the KU Leuven for hospitality.


\end{document}